\theoremstyle{plain}
\newtheorem{theorem}[subsection]{Theorem}
\newtheorem{lemma}[subsection]{Lemma}
\newtheorem{proposition}[subsection]{Proposition}
\newtheorem{corollary}[subsection]{Corollary}
\theoremstyle{definition}
\newtheorem{definition}[subsection]{Definition}
\newtheorem{convention}[subsection]{Convention}
\theoremstyle{remark}
\newtheorem{example}[subsection]{Example}
\newtheorem{remark}[subsection]{Remark}
\newenvironment{tfae}
{
\begin{enumerate}}
{\end{enumerate}}
\def\pullback{
 \ar@{-}[]+R+<6pt,-1pt>;[]+RD+<6pt,-6pt>%
 \ar@{-}[]+D+<1pt,-6pt>;[]+RD+<6pt,-6pt>}
\def\halfsplitpullback{%
 \ar@{-}[]+R+<6pt,-.5ex>;[]+RD+<6pt,-6pt>%
 \ar@{-}[]+D+<1pt,-6pt>;[]+RD+<6pt,-6pt>}
\def\ophalfsplitpullback{%
 \ar@{-}[]+R+<6pt,-1pt>;[]+RD+<6pt,-6pt>%
 \ar@{-}[]+D+<.5ex,-6pt>;[]+RD+<6pt,-6pt>}
\def\splitpullback{%
 \ar@{-}[]+R+<6pt,-.5ex>;[]+RD+<6pt,-6pt>%
 \ar@{-}[]+D+<1ex,-6pt>;[]+RD+<6pt,-6pt>}
\newcommand{\defn}[1]{\textbf{#1}}
\newcommand{\noproof}{\hfill\qed}
\newcommand{\To}{\Rightarrow}
\newcommand{\dashto}{\dashrightarrow}
\newcommand{\C}{\ensuremath{\mathbb{C}}}
\newcommand{\bi}[2]{\ensuremath{\lgroup #1 \;\, #2 \rgroup}}
\newcommand{\matriz}[4]{\ensuremath{\bigl\lgroup \begin{smallmatrix} #1 & #2 \\ #3 & #4 \end{smallmatrix}\bigr\rgroup}}
\newcommand{\Eng}{\ensuremath{\mathsf{Eng}}}
\newcommand{\Gp}{\ensuremath{\mathsf{Gp}}}
\newcommand{\Lie}{\ensuremath{\mathsf{Lie}}}
\newcommand{\Mon}{\ensuremath{\mathsf{Mon}}}
\newcommand{\HSLat}{\ensuremath{\mathsf{HSLat}}}
\newcommand{\Mag}{\ensuremath{\mathsf{Mag}}}
\newcommand{\Set}{\ensuremath{\mathsf{Set}}}
\newcommand{\PSet}{\ensuremath{\mathsf{Set}_*}}
\newcommand{\PSetop}{\ensuremath{\mathsf{Set}_*^{\op}}}
\newcommand{\Loop}{\ensuremath{\mathsf{Loop}}}
\newcommand{\Pt}{\ensuremath{\mathsf{Pt}}}
\newcommand{\SPt}{\ensuremath{\text{$\s$-$\Pt$}}}
\newcommand{\cod}{\ensuremath{\mathrm{cod}}}
\newcommand{\e}{\ensuremath{\mathfrak{e}}}
\newcommand{\x}{\ensuremath{\mathfrak{x}}}
\newcommand{\y}{\ensuremath{\mathfrak{y}}}
\newcommand{\K}{\ensuremath{\mathbb{K}}}
\newcommand{\Z}{\ensuremath{\mathbb{Z}}}
\newcommand{\V}{\ensuremath{\mathbb{V}}}
\newcommand{\X}{\ensuremath{\mathbb{X}}}
\newcommand{\s}{\ensuremath{\mathcal{S}}}
\newcommand{\ImC}{\ensuremath{\mathbb{K}}}
\newcommand{\op}{\ensuremath{\mathrm{op}}}
\newcommand{\Eq}{ \ensuremath{\mathrm{Eq}} }
\newcommand{\iS}[1]{{\bf (iS#1)}}
\newcommand{\iSs}[1]{{\bf (iSs#1)}}
\renewcommand{\S}[1]{{\bf (S#1)}}
\newcommand{\iL}[1]{{\bf (iL#1)}}
\newcommand{\ito}{\dasharrow}
\newcommand{\ci}{\circ}
\newcommand{\diam}{\circ}
\begin{document}

\title[Intrinsic Schreier special objects]{Intrinsic Schreier special objects}

\author[Andrea Montoli]{Andrea Montoli}
\address{Andrea Montoli, Dipartimento di Matematica ``Federigo Enriques'', Universit\`{a} degli
Studi di Milano, Via Saldini 50, 20133 Milano, Italy}{}
\email{andrea.montoli@unimi.it}

\author[Diana Rodelo]{Diana Rodelo}
\address{Diana Rodelo, Departamento de Matem\'atica, Faculdade de Ci\^{e}ncias e Tecnologia, Universidade do Algarve, Campus de Gambelas, 8005-139 Faro, Portugal and CMUC, Department of Mathematics, University of Coimbra, 3001-501 Coimbra, Portugal}
\thanks{The second author acknowledges financial support from the Centre for Mathematics of the University of Coimbra (UID/MAT/00324/2020, funded by the Portuguese Government through FCT/MCTES)}
\email{drodelo@ualg.pt}

\author[Tim Van~der Linden]{Tim Van~der Linden}
\address{Tim Van der Linden, Institut de Recherche en Math\'ematique et Physique, Universit\'e catholique de Louvain, che\-min du cyclotron~2 bte~L7.01.02, 1348 Louvain-la-Neuve, Belgium}
\thanks{The third author is a Research
Associate of the Fonds de la Recherche Scientifique--FNRS}
\email{tim.vanderlinden@uclouvain.be}

\keywords{Imaginary morphism; approximate operation; regular, unital, protomodular category; monoid; $2$-Engel group, Lie algebra; J\'{o}nsson--Tarski variety}

\subjclass[2020]{20M32, 20J15, 18E13, 03C05, 08C05}

\begin{abstract}
Motivated by the categorical-algebraic analysis of split epimorphisms of monoids, we study the concept of a \emph{special object} induced by the \emph{intrinsic Schreier split epimorphisms} in the context of a regular unital category with binary co\-products, comonadic covers and a natural imaginary splitting in the sense of our article~\cite{ise}. In this context, each object comes naturally equipped with an imaginary magma structure. We analyse the intrinsic Schreier split epimorphisms in this setting, showing that their properties improve when the imaginary magma structures happen to be associative. We compare the intrinsic Schreier special objects with the protomodular objects, and characterise them in terms of the imaginary magma structure. We furthermore relate them to the Engel property in the case of groups and Lie algebras.
\end{abstract}

\date{\today}

\maketitle

\section{Introduction}
Recently, two different categorical approaches have been developed which aim to describe the homological properties of monoids, mainly in comparison with the properties groups have. The first one started with the observation that an important class of split epimorphisms of monoids, called \emph{Schreier split epimorphisms}, satisfies the convenient properties of split epimorphisms of groups \cite{SchreierBook, BM-FMS2}. The idea of considering Schreier split epimorphisms originated from the fact that these split epimorphisms correspond to monoid actions in the usual sense \cite{Patchkoria, MartinsMontoliSobral}. Although the category of monoids is not protomodular, Schreier split epimorphisms satisfy the properties that are typical for split epimorphisms in a protomodular category. This led to the notion of an \emph{$\s$-protomodular category}, with respect to a chosen class $\s$ of points---i.e., split epimorphisms with fixed section \cite{S-proto}. In an $\s$-protomodular category, it is always possible to identify a full subcategory which is protomodular~\cite{Bourn protomod}, called in~\cite{BM-FMS2} the \emph{protomodular core} with respect to the class $\s$. The objects of this subcategory are the \emph{$\s$-special objects}, namely those objects $X$ for which the split epimorphism $X \times X \leftrightarrows X,$ given by the second product projection and the diagonal morphism, belongs to~$\s$. The category of monoids is not protomodular but it is $\s$-protomodular with respect to the class of Schreier split epimorphisms, and its protomodular core is the category of groups.

The second approach consists in considering, in a pointed category with finite limits, a suitable class of objects, called \emph{protomodular objects} \cite{2Chs}. These are the objects $Y$ such that every split epimorphism with codomain $Y$ is \emph{stably strong}. A~split epimorphism with a given section is \emph{strongly split} if its kernel and its section are jointly extremal-epimorphic. It is \emph{stably strong} if every pullback of it along any morphism is a strongly split epimorphism. As proved in \cite{2Chs}, in the category of monoids the protomodular objects are precisely the groups.

The notion of protomodular object makes sense in every (pointed) category with finite limits, while Schreier special objects can apparently be considered only in the context of a J\'{o}nsson--Tarski variety \cite{JT}, because the notion of Schreier split epimorphism depends on the existence of a function, which is not a morphism in general, called the \emph{Schreier retraction}. In order to study this from a categorical perspective, we introduced in \cite{ise} the concept of \emph{intrinsic Schreier split epimorphism}, in the context of a regular unital category~\cite{B0} equipped with a comonadic cover (in the sense we recall in Subsection \ref{Comonadic covers}). This approach is inspired by the notion of imaginary morphism \cite{AMO}: indeed, the Schreier retraction we need is such an imaginary morphism. We showed in~\cite{ise} that these categories are $\s$-protomodular with respect to the class of intrinsic Schreier split epimorphisms, and we obtained an intrinsic version of the so-called Schreier special objects. It is shown in \cite{ise} that the concepts of intrinsic Schreier special object and protomodular object are independent. Since, however, the two coincide in the category of monoids, the question of understanding when the two notions are related arises naturally.

One of the goals of the present paper is to give an answer to this question. An important ingredient here is the observation that, when considering the Kleisli category associated with the comonad involved in the definition of an intrinsic Schreier split epimorphism, the definition itself simplifies greatly (Section~\ref{Intrinsic Schreier split epimorphisms}). Also, each object admits a canonical imaginary magma structure whose operation (called \emph{imaginary addition} in the text) depends on a choice of a natural imaginary splitting, which is part of our initial setting (Section~\ref{Imaginary addition in unital categories}). It turns out that an object is intrinsic Schreier special precisely when its imaginary magma structure is a one-sided loop structure (Theorem~\ref{one-sided loop}). Under the assumption that the imaginary addition is associative (Section~\ref{The associativity axiom}) we are able to extend several stability properties and homological lemmas which hold for Schreier extensions of monoids~\cite{SchreierBook} to our intrinsic context (Section~\ref{Stability properties}). Moreover, we prove that every intrinsic Schreier special object is a protomodular object (Corollary~\ref{iSchreier special => proto}).

It was shown in~\cite{ise} that there are only two possible choices for the natural imaginary splitting in the category of monoids, which leads to only two possible imaginary additions. This is no longer true for the category of groups or Lie algebras, where many options are available. Therefore, we focus on studying intrinsic Schreier special objects with respect to \emph{all} natural imaginary additions in these categories. We prove that $2$-Engel groups are intrinsic Schreier special with respect to all possible imaginary additions (Proposition~\ref{iSchreier for all t}). A similar result also holds for Lie algebras (Proposition~\ref{Thm:2EngelLie}).

\section{Imaginary morphisms}\label{Section imaginary morphs}
In this section we recall the concept of an \emph{imaginary morphism} which is of crucial importance in our work. We fix a particular setting where these imaginary morphisms can be defined.

\subsection{Imaginary morphisms~\cite{DB-ZJ-2009}}\label{Imaginary morphisms}
We take $\X$ to be the functor category $\Set^{\C^{\op}\times \C}$, where $\C$ is an arbitrary (small) category. Consider functors $\hom_{\C}$ and $A\colon\C^{\op}\times \C\to \Set$ and a natural transformation $\alpha\colon \hom_{\C} \To A$. If $\alpha$ is monomorphic, then all sets $A(X,Y)$ contain (an isomorphic copy of) $\hom_{\C}(X,Y)$. So, we may think of $A(X,Y)$ as an \emph{extension} of $\hom_{\C}(X,Y)$, and indeed in~\cite{DB-ZJ-2009} the triple $(\C, A, \alpha)$ was called an \emph{extended category}. The elements of $A(X,Y)\backslash \hom_{\C}(X,Y)$ will be called \defn{imaginary morphisms}. Sometimes it will be convenient to call a morphism in $\hom_{\C}(X,Y)$ a \defn{real morphism} to emphasise that it is an actual morphism in $\C$.

We use arrows of the type
$$
 \xymatrix{X \ar@{-->}[r] & Y}
$$
to represent an element of $A(X,Y)$, which could be an imaginary morphism or not. To distinguish those which are not, i.e., the elements of $A(X,Y)$ corresponding to a real morphism, say $f\colon X\to Y$, we tag the dashed arrow with the name of that real morphism overlined (instead of $\alpha_{X,Y}(f)$):
$$
 \xymatrix{X \ar@{-->}[r]^-{\overline{f}} & Y.}
$$

It is possible to define an extended composition, denoted by $\ci$, be\-tween real and imaginary morphisms as follows:

$$
 \xymatrix{X \ar@{-->}[r]^-{a} \ar@{-->}@/_1pc/[rr]_-{v\ci a} & Y \ar[r]^-v & V,} \;\;\text{where}\;\; v\ci a=A(1_X,v)(a)
$$
and
$$
 \xymatrix{U \ar[r]^-u \ar@{-->}@/_1pc/[rr]_-{a\ci u} & X \ar@{-->}[r]^-{a} & Y,} \;\;\text{where}\;\; a\ci u=A(u,1_Y)(a).
$$

If $a$ corresponds to a real morphism, i.e., $a=\overline{f}=\alpha_{X,Y}(f\colon X\to Y)$, then the same is true for $v\ci a$ and $a\ci u$. Indeed, by the naturality of $\alpha$ we have
$$
 A(1_X,v)(\alpha_{X,Y}(f)) = \alpha_{X,V}(vf),
$$
so that $v\ci \overline{f} = \overline{vf}(=\alpha_{X,V}(vf))$ corresponds to the real morphism $vf$. Similarly, $\overline{f}\ci u = \overline{fu}(=\alpha_{U,Y}(fu))$ corresponds to the real morphism $fu$. In particular, we obtain identity properties $v\ci \overline{1_Y}=\overline{v}$ and $\overline{1_X}\ci u=\overline{u}$. There is also an associativity property, which follows from the fact that $A$ is a functor:
\begin{align*}
(v\ci a)\ci u & = A(u,1_V)(A(1_X,v)(a))=A(u,v)(a)\\
 & =A(1_U,v)(A(u,1_Y)(a))=v\ci (a\ci u).	
\end{align*}
 
\begin{definition}\label{def imaginary splitting}
We say that a real morphism $f\colon X\to Y$ admits an \defn{imaginary splitting} when there exists an imaginary morphism $s$ such that the following diagram commutes
$$
\xymatrix{Y \ar@{-->}[r]^-{s} \ar@{-->}@/_1pc/[rr]_-{f\ci s=\overline{1_Y}} & X \ar[r]^-f & Y.}
$$
\end{definition}

\subsection{Comonadic covers}\label{Comonadic covers}
We assume that $\C$ is a regular category equipped with a comonad $(P,\delta,\varepsilon)$ whose counit $\varepsilon$ is a regular epimorphism. Then for each object $X$ in $\C$, the morphism $\varepsilon_X\colon {P(X)\twoheadrightarrow X}$ is a \emph{(comonadically) chosen cover} of $X$ in~$\C$, which for us means that we have a regular epimorphism $\varepsilon_X$ with codomain~$X$, determined by the given comonad. 

Note that for any morphism $f\colon X\to Y$ in $\C$
\begin{equation}\label{naturality of epsilon}
 f\varepsilon_X=\varepsilon_YP(f)
\end{equation}
and
\begin{equation}\label{naturality of delta}
 P^2(f)\delta_X=\delta_YP(f),
\end{equation}
where $P^2=PP$. Also
\begin{equation}\label{counit}
 \varepsilon_{P(X)} \delta_X=1_{P(X)}=P(\varepsilon_X)\delta_X
\end{equation}
and
\begin{equation}\label{coassociative}
 P(\delta_X)\delta_X = \delta_{P(X)}\delta_X,
\end{equation}
for any object $X$ in $\C$.

\begin{example}\label{Varieties Comonad}
If $\V$ is a variety of universal algebras, then we may consider the free algebra comonad $(P,\delta,\varepsilon)$. For any algebra $X$, we have
\[
\varepsilon_X \colon P(X) \twoheadrightarrow X \colon [x] \mapsto x \qquad \text{and} \qquad \delta_X \colon P(X) \hookrightarrow P^2(X)\colon[x] \mapsto \left[ [x] \right],
\]
where $[x]$ denotes the one letter word $x$; such words are the generators of $P(X)$. In this case, any function $f\colon{X\to Y}$ between algebras $X$ and $Y$ extends uniquely to a morphism $P(X) \to Y\colon \left[x\right] \mapsto f(x)$ in $\V$.
\end{example}

\subsection{Imaginary morphisms induced from comonadic covers}\label{Imaginary morphisms induced from comonadic covers}
The idea behind functions extending to real morphisms in Example~\ref{Varieties Comonad} can be captured through the notion of imaginary morphism: it is like a function (not a morphism) $X \dashto Y$ of algebras $X$ and $Y$ that extends to an actual morphism of algebras $P(X)\to Y$. More precisely, given a regular category $\C$ with comonadic covers we define the functor
$$
\begin{array}{rcl} A: \C^{\op}\times \C & \rightarrow & \Set, \\
 (X,Y) & \mapsto & \hom_{\C}(P(X),Y) \vspace{3pt}\\
 u \uparrow\; \downarrow v & & \downarrow A(u,v)\vspace{3pt}\\
 (U,V) & \mapsto & \hom_{\C}(P(U),V)
\end{array}
$$
where $A(u,v)=\hom_{\C}(P(u),v)$. So, $A$ is just the functor $\hom_{\C} (P^{\op}\times 1_{\C})$.

The components of $\alpha\colon \hom_{\C}\Rightarrow A$ are defined, for all objects $X$, $Y$ by
\[
\alpha_{X,Y}\colon \hom_{\C}(X,Y) \rightarrow \hom_{\C}(P(X),Y)\colon \bigl( X \stackrel{f}{\rightarrow} Y\bigr) \mapsto \bigl(P(X) \stackrel{\varepsilon_X}{\twoheadrightarrow} X \stackrel{f}{\rightarrow} Y\bigr).
\]
Note that $\alpha$ is indeed a natural transformation because $\varepsilon$ is (see \eqref{naturality of epsilon}). Also, the components $\alpha_{X,Y}$ are injective, for all objects $X,Y$, since $\varepsilon_X$ is a regular epimorphism. Since the elements of $A(X,Y)=\hom_{\C}(P(X),$ $Y)$ are actual morphisms in $\C$, an arrow of the type $X \dashto Y$ corresponds to a morphism $P(X)\to Y$. According to Subsection~\ref{Imaginary morphisms}:

\begin{itemize}
\item if $X\stackrel{f}{\rightarrow} Y$ is a real morphism, then $X\stackrel{\overline{f}}{\dashto} Y$ corresponds to the morphism $P(X) \stackrel{f\varepsilon_X}{\longrightarrow} Y$, so $\overline{f}=f\varepsilon_X$;
\item an imaginary morphism $X\stackrel{a}{\dashto} Y$ is a (real) morphism $P(X)\stackrel{a}{\rightarrow} Y$ which is \emph{not} of the form $a=f\varepsilon_X$, for some real morphism $X\stackrel{f}{\rightarrow} Y$;
\item the composition of a real morphism with an imaginary one is defined by
\[
\begin{cases}
\xymatrix{X \ar@{-->}[r]^-{a} \ar@{-->}@/_1pc/[rr]_-{v\ci a} & Y \ar[r]^-v & V}, & \text{where $v\ci a=va\colon P(X) \rightarrow V$,}\\
\xymatrix{U \ar[r]^-u \ar@{-->}@/_1pc/[rr]_-{a\ci u} & X \ar@{-->}[r]^-{a} & Y}, & \text{where $a\ci u=aP(u)\colon P(U)\rightarrow Y$.}
\end{cases}
\]
\end{itemize}

\begin{convention}
From now on, we only consider imaginary morphisms that are induced from comonadic covers.
\end{convention}

\begin{remark}\label{rem imaginary splitting}
It is clear that in this setting the existence of an imaginary splitting (Definition~\ref{def imaginary splitting}) for a morphism $f$ implies that $f$ is a regular epimorphism ($f\ci s=\overline{1_Y}$ implies that $fs=\varepsilon_Y$, which is a regular epimorphism). The converse holds when the values of $P$ are projective objects in $\C$. If $f\colon X\twoheadrightarrow Y$ is a regular epimorphism, then $f$ admits an imaginary splitting because $P(Y)$ is projective
$$
\xymatrix{& P(Y) \ar[dl]_-{\exists\; s}\ar@{>>}[d]^-{\varepsilon_Y} \\
 X \ar@{>>}[r]_-f & Y;}
$$
thus $fs=\varepsilon_{Y}$. So the existence of imaginary splittings characterises regular epimorphisms in this setting. Moreover, $P(f)$ is a split epimorphism since
\[
P(f)P(s)\delta_Y=P(\varepsilon_Y)\delta_Y\stackrel{\eqref{counit}}{=} 1_{P(Y)}.
\]
\end{remark}

\section{The Kleisli category}\label{The Kleisli category} Let $\C$ be a regular category with comonadic covers. We denote by $\ImC$ the Kleisli category associated to the comonad $(P, \delta, \varepsilon)$. Its objects are those of $\C$ and $\hom_{\ImC}(X,Y)=\hom_{\C}(P(X),Y)$. The morphisms of $\ImC$ are the imaginary morphisms together with those of the type $\overline{f}\colon X\dashto Y$, for some real morphism $f\colon X\to Y$ (Subsection~\ref{Imaginary morphisms induced from comonadic covers}).

The composition in $\ImC$ will also be denoted by $\diam$ (as in Subsections~\ref{Imaginary morphisms induced from comonadic covers} and~\ref{Imaginary morphisms})
$$
\xymatrix{A \ar@{-->}[r]^a \ar@{-->}@/_1pc/[rr]_-{b\diam a} & B \ar@{-->}[r]^-b & C,}
$$
where $b\diam a$ corresponds to the morphism in $\C$
$$
\xymatrix{P(A) \ar[r]^-{\delta_A} & P^2(A) \ar[r]^-{P(a)} & P(B) \ar[r]^-b & C.}
$$

\begin{remark}\label{pps of diamond}
If any of the morphisms in a composite in $\ImC$ corresponds to a real morphism, then this composite coincides with the one defined in Subsections~\ref{Imaginary morphisms induced from comonadic covers} and~\ref{Imaginary morphisms}---See Table~\ref{Fig:Compositions}.
\begin{table}
\begin{tabular}{c@{\qquad}c}
\toprule
{composition $\ci$} & {composition in $\C$} \\
\hline
$\xymatrix{X \ar@{-->}[r]^-{a} \ar@{-->}@/_1pc/[rr]_-{v\circ a} & Y \ar[r]^-v & V}$ & $\xymatrix{P(X) \ar[r]^-{a} & Y \ar[r]^-v & V}$\\
$\xymatrix{X \ar@{-->}[r]^-{a} \ar@{-->}@/_1pc/[rr]_-{\overline{v}\circ a} & Y \ar@{-->}[r]^-{\overline{v}} & V}$ & $\xymatrix{P(X) \ar[r]^-{\delta_X} \ar@{=}[drr]^-{\eqref{counit}} & P^2(X) \ar[r]^-{P(a)} \ar[dr]^-{\varepsilon_{P(X)}} & P(Y) \ar[r]^-{\varepsilon_Y} \ar@{}[d]|-{\eqref{naturality of epsilon}} & \ar[r]^-v & V \\ & & P(X) \ar[ur]^-a}$\\
$\xymatrix{U \ar[r]^-{u} \ar@{-->}@/_1pc/[rr]_-{a\circ u} & X \ar@{-->}[r]^-a & Y}$ & $\xymatrix{P(U) \ar[r]^-{P(u)} & P(X) \ar[r]^-a & Y}$\\
$\xymatrix{U \ar@{-->}[r]^-{\overline{u}} \ar@{-->}@/_1pc/[rr]_-{a\circ \overline{u}} & X \ar@{-->}[r]^-a & Y}$ & $\xymatrix{P(U) \ar[r]^-{\delta_U} \ar@{=}@/_2pc/[rr]^-{\eqref{counit}} & P^2(U) \ar[r]^-{P(\varepsilon_U)} & P(U) \ar[r]^-{P(u)} & P(X) \ar[r]^-a & Y}$\\
\bottomrule
\end{tabular}
\medskip
\caption{Composition in the Kleisli category}\label{Fig:Compositions}
\end{table}
\end{remark}

The comonad $(P,\delta, \varepsilon)$ gives rise to an adjunction $\ImC \rightleftarrows \C$, where the right adjoint is the embedding
\[
 I \colon \C \rightarrow \ImC \colon
 \xymatrix{X \ar[r]^-f & Y} \mapsto \xymatrix{X \ar@{-->}[r]^-{\overline{f}} & Y}
\]
Consequently, $\ImC$ has a limit for every finite diagram in $\C$, which is just the limit of that diagram in $\C$, embedded into $\ImC$.

\section{Imaginary addition in unital categories}\label{Imaginary addition in unital categories}
In this section we define an \emph{imaginary addition} on each object $X$ of a unital category with comonadic covers, i.e., an imaginary morphism $\mu^X\colon X\times X\dashto X$ such that $\mu^X\ci \langle 1_X,0\rangle =\overline{1_X}$ and $\mu^X\ci \langle 0,1_X\rangle =\overline{1_X}$. Such an imaginary addition provides one of the tools needed to define intrinsic Schreier split epimorphisms in Section~\ref{Intrinsic Schreier split epimorphisms}.

\subsection{Unital categories~\cite{B0}}\label{Unital categories}
A pointed and finitely complete category is called \defn{unital} when, for all objects $A, B$,
$$
\xymatrix@!0@=5em{ A \ar[r]^-{\langle 1_A,0\rangle} & A\times B & B \ar[l]_-{\langle 0,1_B\rangle}}
$$
is a jointly extremal-epimorphic pair.

\begin{example}
As shown in~\cite{Borceux-Bourn}, a variety of universal algebras $\V$ is unital if and only if it is a \defn{J\'onsson--Tarski variety}~\cite{JT}. Recall that a J\'onsson--Tarski variety is such that its theory contains a unique constant $0$ and a binary operation $+$ satisfying the identities $x+0=x=0+x$. So an algebra is a unitary magma, possibly equipped with additional operations.
\end{example}

A pointed finitely complete category $\C$ is unital if and only if for any \defn{punctual span}
\[
\xymatrix@!0@=5em{ A \ar@<.5ex>[r]^-{s} & C \ar@<-.5ex>@{>>}[r]_-{g} \ar@<+.5ex>@{>>}[l]^-{f} & B, \ar@<-.5ex>[l]_-{t}} \qquad\qquad
 \text{$fs=1_A$, $gt=1_B$, $ft=0$, $gs=0$}
\]
in $\C$, the induced morphism $\langle f,g\rangle\colon C\twoheadrightarrow A\times B$ is a strong epimorphism (Theorem 1.2.12 in~\cite{Borceux-Bourn}). Consequently, a pointed regular category with binary coproducts is unital if and only if for all objects $A,B$, the comparison morphism
$$
r_{A,B}=\matriz{1_A}{0}{0}{1_B}\colon A+B\twoheadrightarrow A\times B
$$
is a regular epimorphism.

\subsection{Natural imaginary splittings~\cite{ise}}\label{Natural imaginary splittings}
If $\C$ is a regular unital category with binary coproducts and comonadic covers, then for all objects $A,B$, the comparison morphism $r_{A,B}=\matriz{1_A}{0}{0}{1_B}\colon A+B \twoheadrightarrow A\times B$ is a regular epimorphism. When $P(A\times B)$ is a projective object, as in the varietal case, there exists a (not necessarily unique) morphism $t_{A,B}\colon P(A\times B)\to A+B$ such that
\begin{equation}\label{im splitting}
 r_{A,B} t_{A,B}=\varepsilon_{A\times B}
\end{equation}
(see Remark~\ref{rem imaginary splitting}). That is to say, there exists an imaginary splitting $t_{A,B}$ for the regular epimorphism $r_{A,B}$. 

\begin{example}\label{LeftRightImaginarySplitting}
Let $\V$ be a J\'{o}nsson--Tarski variety. For any pair of algebras $(A,B)$ in~$\V$, we can make the following choices of an imaginary splitting for $r_{A,B}$: the \emph{direct imaginary splitting} $t^d_{A,B}$
\[
[(a,b)]\mapsto \underline{a}+\overline{b}
\]
which sends a generator $[(a,b)]\in P(A\times B)$ to the sum of $\underline{a}=\iota_1(a)$ with $\overline{b}=\iota_2(b)$ in~${A+B}$ (where $\iota_1$ and $\iota_2$ are the coproduct inclusions); and the \emph{twisted imaginary splitting} $t^w_{A,B}$
\[
[(a,b)]\mapsto \overline{b}+\underline{a}
\]
which does the same, but in the opposite order. Note that both of those choices $t_{A,B}$ are natural in $A$ and in $B$, so that they each determine a natural transformation
\[
t\coloneq (t_{A,B}\colon P(A\times B)\to A+B)_{A,B\in \C}
\]
such that $r t=\varepsilon$ as natural transformations.

It was shown in~\cite{ise} that when $\V$ is the category $\Mon$ of monoids, then the above choices of natural imaginary splittings (direct and twisted) are the only options. This is far from being true in general: if $A$ and $B$ are groups then we can also send $[(a,b)]$ to $\underline{-a} + \overline{b} + \underline{a+a}$, for instance. See the end of Section~\ref{isso vs proto objs} for further examples.
\end{example}

We make the existence of a natural $t$ into an axiom. Let $\C$ be a pointed regular (unital) category $\C$ with binary coproducts and comonadic covers. Suppose also that there exist $t_{A,B}$ such that \eqref{im splitting} holds and that they are the components of a natural transformation $t$, where $r t=\varepsilon$. Then all $r_{A,B}$ are necessarily regular epimorphisms (because the $\varepsilon_{A\times B}$ are) and, consequently, $\C$ is a unital category. In~\cite{ise} such a natural transformation $t$ was called a \defn{natural imaginary splitting}.

\begin{remark}\label{remarks on t}
Any natural imaginary splitting $t= (t_{A,B}\colon A\times B\ito A+B)_{A,B\in \C}$ has the following properties:
\begin{itemize}
\item[1.] $t_{A,0}$ can be identified with $\varepsilon_A$, up to canonical isomorphisms, as follows:
$$
\xymatrix@=30pt{P(A) \ar@{>>}[r]^-{\varepsilon_A} \ar[d]|-{\cong} & A \ar@{=}[r]^-{1_A} \ar[d]|-{\cong} & A \ar[d]|-{\cong} \\
 P(A\times 0) \ar[r]^-{t_{A,0}} \ar@{>>}@/_1pc/[rr]_-{\varepsilon_{A\times 0}} & A+0 \ar@{>>}[r]^-{r_{A,0}} & A\times 0,}
$$
for all objects $A$ in $\C$;
\item[2.] the naturality of $t$ gives the commutative diagram
\begin{equation}\label{naturality of t}
\vcenter{\xymatrix@=30pt{P(A\times B) \ar[r]^-{t_{A,B}} \ar[d]_-{P(u\times v)} & A+B \ar[d]^-{u+v} \\
 P(C\times D) \ar[r]_-{t_{C,D}} & C+D}}
\end{equation}
for all $u\colon A\to C$, $v\colon B\to D$ in $\C$;
\item[3.] from~\eqref{im splitting}, we deduce
\begin{equation}\label{pp1 for t}
 \bi{1_A}{0}t_{A,B}=\pi_A\varepsilon_{A\times B} \stackrel{\eqref{naturality of epsilon}}{=}\varepsilon_A P(\pi_A)
\end{equation}
and
\begin{equation}\label{pp2 for t}
 \bi{0}{1_B}t_{A,B}=\pi_B\varepsilon_{A\times B} \stackrel{\eqref{naturality of epsilon}}{=}\varepsilon_B P(\pi_B)
\end{equation}
for all objects $A$ and $B$ in $\C$;
\item[4.] using properties 1.\ and 2.\ above, we obtain the (regular epimorphism, monomorphism) factorisations
\begin{equation}\label{pp3 for t}
\vcenter{\xymatrix@C=50pt@R=10pt{P(A) \ar[r]^-{P(\langle 1_A,0\rangle)} \ar@{>>}[dr]_-{\varepsilon_A} & P(A\times B) \ar[r]^-{t_{A,B}} & A+B\\
 & A \ar@{ >->}[ru]_-{\iota_1}}}
\end{equation}
and
\begin{equation}\label{pp4 for t}
\vcenter{\xymatrix@C=50pt@R=10pt{P(B) \ar[r]^-{P(\langle 0, 1_B\rangle)} \ar@{>>}[dr]_-{\varepsilon_B} & P(A\times B) \ar[r]^-{t_{A,B}} & A+B, \\
 & B \ar@{ >->}[ru]_-{\iota_2}}}
\end{equation}
for all objects $A$ and $B$ in $\C$.
\end{itemize}
\end{remark}

\subsection{Imaginary addition}\label{Imaginary addition}
Let $\C$ be a regular unital category with binary coproducts, comonadic covers and a natural imaginary splitting $t$. For every object $X$, we consider the imaginary morphism $\mu^X\colon X\times X \dashto X$ given by
\begin{equation}\label{imaginary addition}
 \xymatrix@C=45pt{P(X\times X) \ar[r]^-{t_{X,X}} \ar@{-->}@/_1pc/[rr]_-{\mu^X} & X+X \ar[r]^-{\bi{1_X}{1_X}} & X.}
\end{equation}
We call $\mu^X$ an \defn{imaginary addition} on $X$ since
\begin{equation}\label{x+0=x}
 \xymatrix@C=45pt{X \ar[r]^-{\langle 1_X,0\rangle} \ar@{-->}@/_1pc/[rr]_-{\mu^X\ci \langle 1_X,0\rangle=\overline{1_X}} & X\times X \ar@{-->}[r]^-{\mu^X} & X}
\end{equation}
and
\begin{equation}\label{0+x=x}
 \xymatrix@C=45pt{X \ar[r]^-{\langle 0, 1_X\rangle} \ar@{-->}@/_1pc/[rr]_-{\mu^X\ci \langle 0,1_X\rangle=\overline{1_X}} & X\times X \ar@{-->}[r]^-{\mu^X} & X.}
\end{equation}
Indeed,
$
 \mu^X\ci \langle 1_X,0\rangle=\bi{1_X}{1_X} t_{X,X} P(\langle 1_X,0 \rangle) \stackrel{\eqref{pp3 for t}}{=} \bi{1_X}{1_X}\iota_1\varepsilon_X = \varepsilon_X = \overline{1_X}
$
and
$
 \mu^X\ci \langle 0,1_X\rangle=\bi{1_X}{1_X} t_{X,X} P(\langle 0,1_X \rangle) \stackrel{\eqref{pp4 for t}}{=} \bi{1_X}{1_X}\iota_2\varepsilon_X = \varepsilon_X = \overline{1_X}.
$
We adapt Definition 3.15 in~\cite{DB-ZJ-2009} to the unital context and call the family
\[
\mu\coloneq(\mu^X\colon X\times X \dashto X)_{X\in \C}
\]
a \defn{natural addition}. Here ``natural'' means that for any morphism $f\colon X\to Y$ the diagram
\begin{equation}\label{naturality of mu}
\vcenter{\xymatrix@=30pt{ X\times X \ar@{-->}[r]^-{\mu^X} \ar[d]_-{f\times f} & X \ar[d]^-f \\
 Y\times Y \ar@{-->}[r]_-{\mu^Y} & Y}}
\end{equation}
commutes. Indeed,
\begin{align*}
f\ci \mu^X &\stackrel{\phantom{\eqref{naturality of t}}}{=} f \bi{1_X}{1_X} t_{X,X} = \bi{1_Y}{1_Y} (f+f)t_{X,X} \\&\stackrel{\eqref{naturality of t}}{=} \bi{1_Y}{1_Y} t_{Y,Y} P(f\times f) = \mu^Y\ci (f\times f).
\end{align*}

\section{Intrinsic Schreier split extensions}\label{Intrinsic Schreier split epimorphisms}
In this section we recall the notion of a Schreier split epimorphism of monoids and its extended categorical version, the notion of an intrinsic Schreier split epimorphism. We actually give a simplified version of the intrinsic definition by using the direct composition of imaginary morphisms, which is simply the composition in the Kleisli category associated with the comonad of the comonadic covers.

\subsection{Schreier split extensions of monoids~\cite{SchreierBook, BM-FMS2}}\label{Schreier split extensions of monoids}
We recall the definition and the main properties concerning Schreier split epimorphisms.

A split epimorphism of monoids $f$ with chosen section $s$ and kernel $k$
\begin{equation}\label{split ext of monoids}
 \xymatrix@!0@=5em{ K \ar@{ |>->}[r]_-{k} & (X,\cdot,1) \ar@<-.5ex>@{>>}[r]_-{f} & Y \ar@<-.5ex>[l]_-{s}}
\end{equation}
is called a \defn{Schreier split epimorphism} if, for every $x\in X$, there exists a unique element $a\in K$ such that $x=k(a)\cdot sf(x)$. Equivalently, if there exists a unique function $q\colon X\dashto K$ such that $x=kq(x)\cdot sf(x)$ for all $x\in X$. We emphasise the fact that $q$ is just a function (not necessarily a morphism of monoids) by using an arrow of type $\dashto$.

The uniqueness property may be replaced~\cite[Proposition~2.4]{BM-FMS2} by an extra condition on $q$: the couple $(f,s)$ as in \eqref{split ext of monoids} is a Schreier split epimorphism if and only if there exists a $q$, as above, such that
\begin{itemize}
\item[\S1] $x=kq(x)\cdot sf(x)$, for all $x\in X$;
\item[\S2] $q(k(a)\cdot s(y))=a$, for all $a\in K$, $y\in Y$.
\end{itemize}

\begin{remark}\label{homogeneous}Recall from~\cite{SchreierBook}
that Schreier split epimorphisms are also called \defn{right homogenous} split epimorphisms. A split epimorphism as in~\eqref{split ext of monoids} is called \defn{left homogenous} if, for every $x\in X$, there exists a unique element $a\in K$ such that $x=sf(x)\cdot k(a)$.
\end{remark}

\begin{proposition}\cite[Proposition~2.1.5]{SchreierBook} \label{basic pps}
Given a Schreier split epimorphism as in~\eqref{split ext of monoids}, the following hold:
\begin{itemize}
\item[\S3] $qk=1_K$;
\item[\S4] $qs=0$;
\item[\S5] $q(1)=1$;
\item[\S6] $kq(s(y)\cdot k(a))\cdot s(y)=s(y)\cdot k(a)$, for all $a\in K$, $y\in Y$;
\item[\S7] $q(x\cdot x')=q(x)\cdot q(sf(x)\cdot kq(x'))$, for all $x$, $x'\in X$.
\end{itemize}
\end{proposition}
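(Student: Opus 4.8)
The plan is to derive all five identities from the two defining properties \S1 and \S2, using repeatedly that $k$, $s$ and $f$ are monoid homomorphisms, so that in particular they preserve the unit ($k(1)=1$, $s(1)=1$) and the multiplication.

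First I would dispose of \S3, \S4 and \S5, which are immediate instances of \S2. Since $s(1)=1$, setting $y=1$ in \S2 gives $q(k(a)) = q(k(a)\cdot s(1)) = a$ for every $a\in K$, i.e.\ $qk = 1_K$, which is \S3. Dually, since $k(1)=1$, setting $a=1$ in \S2 gives $q(s(y)) = q(k(1)\cdot s(y)) = 1$ for every $y\in Y$, i.e.\ $qs=0$, which is \S4. Then \S5 follows by taking $y=1$ in \S4 (or $a=1$ in \S3), giving $q(1)=q(s(1))=1$.

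For \S6 I would simply evaluate \S1 at the element $x = s(y)\cdot k(a)$. Since $f$ is a homomorphism with $fk=0$ and $fs=1_Y$, we get $f(s(y)\cdot k(a)) = y\cdot 1 = y$, hence $sf(x)=s(y)$; substituting this into $x = kq(x)\cdot sf(x)$ yields $s(y)\cdot k(a) = kq(s(y)\cdot k(a))\cdot s(y)$, which is exactly \S6.

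The only real work is in \S7. Starting from \S1 applied to both $x$ and $x'$, I would write $x\cdot x' = kq(x)\cdot sf(x)\cdot kq(x')\cdot sf(x')$. The key step is to \emph{normalise} the middle block $z \coloneq sf(x)\cdot kq(x')$: applying \S1 to $z$ and noting that $f(z)=f(x)$ (again because $fk=0$), one obtains $z = kq(z)\cdot sf(x)$. Substituting this back and using that $k$, $s$ and $f$ are homomorphisms, I can regroup $kq(x)\cdot kq(z)$ into $k(q(x)\cdot q(z))$ and $sf(x)\cdot sf(x')$ into $sf(x\cdot x')$, obtaining
$$
x\cdot x' = k\bigl(q(x)\cdot q(z)\bigr)\cdot sf(x\cdot x'),
$$
so that $x\cdot x'$ is written in the canonical form $k(a)\cdot s(y)$ with $a = q(x)\cdot q(z)$ and $y = f(x\cdot x')$. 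Applying \S2 then reads off $q(x\cdot x') = q(x)\cdot q(z) = q(x)\cdot q(sf(x)\cdot kq(x'))$, which is \S7. I expect this normalisation to be the main obstacle: one has to recognise that the intermediate element $sf(x)\cdot kq(x')$ must itself be put into Schreier form via \S1 before the homomorphism properties of $k$ and $s$ can collapse the expression into a single $k(-)\cdot s(-)$ term, after which the uniqueness encoded in \S2 finishes the argument.
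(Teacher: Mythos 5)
Your proof is correct, and it is essentially the standard argument: the paper states this result without proof, citing \cite[Proposition~2.1.5]{SchreierBook}, where \S3--\S5 are read off from \S2, \S6 from \S1 applied to $s(y)\cdot k(a)$, and \S7 from exactly the normalisation you perform (rewriting $sf(x)\cdot kq(x')$ via \S1 before regrouping and invoking \S2). No gaps.
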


A split epimorphism as in~\eqref{split ext of monoids} is said to be \defn{strong} when $(k,s)$ is a jointly extremal-epimorphic pair. It is \defn{stably strong} if every pullback of it along any morphism is strong. Any Schreier split epimorphism is (stably) strong (see~\cite{SchreierBook}, Lemma 2.1.6 and Proposition 2.3.4), thus $f$ is the cokernel of its kernel $k$. So, viewed as a split short exact sequence, such a split epimorphism in fact forms a \emph{Schreier split extension}.

As shown in \cite{MartinsMontoli}, the definition of a Schreier split epimorphism makes sense also in the wider context of J\'onsson--Tarski varieties.

\subsection{Intrinsic Schreier split extensions~\cite{ise}}\label{ISSE}
We recall our approach to Schreier extensions. Here $\C$~will denote a regular unital category with binary coproducts, comonadic covers and a natural imaginary splitting $t$.

\begin{definition} A split epimorphism $f$ with chosen section $s$ and kernel $k$
\begin{equation}\label{iSchreier}
\xymatrix@!0@=4em{ K \ar@{ |>->}[r]_-{k} & X \ar@<-.5ex>@{>>}[r]_-{f} & Y, \ar@<-.5ex>[l]_-{s}}
\end{equation}
is called an \defn{intrinsic Schreier split epimorphism} (with respect to $t$) if there exists an imaginary morphism $q\colon {X\ito K}$ (i.e., a morphism $q \colon P(X) \to K$), called the \defn{imaginary (Schreier) retraction}, such that
\begin{itemize}
\item[\iS1] $\mu^X\diam \langle k\ci q,\overline{sf}\rangle=\overline{1_X}$, i.e., the diagram
$$
\xymatrix@=30pt{X \ar@{-->}[r]^-{\langle k\ci q, \overline{sf}\rangle} \ar@{-->}[dr]_-{\overline{1_X}} & X\times X \ar@{-->}[d]^-{\mu^X} \\ & X}
$$
commutes;
\item[\iS2] $q\diam \mu^X \ci (k\times s)=\overline{\pi_K}$, i.e., the diagram
$$
\xymatrix@=30pt{ K\times Y \ar[r]^-{k\times s} \ar@{-->}[drr]_-{\overline{\pi_K}} & X\times X \ar@{-->}[r]^-{\mu^X} & X \ar@{-->}[d]^-q \\ & & K}
$$
 commutes.
\end{itemize}
\end{definition}

The original definition in~\cite{ise} expressed the above axioms through their corresponding morphisms and equalities in $\C$. However, using the composition in the Kleisli category $\ImC$, as above, gives a better understanding of the link with \S1 and~\S2.

The imaginary retraction of an intrinsic Schreier split epimorphism is necessarily unique (see~\cite[Proposition 5.3]{ise}) and we also have (by~\cite[Proposition 5.4]{ise}):
\begin{itemize}
\item[\iS3] $\xymatrix@=30pt{K \ar[r]^-k \ar@{-->}@/_1pc/[rr]_-{q\ci k=\overline{1_K}} & X \ar@{-->}[r]^-{q} & K,}$ i.e., $qP(k)=\varepsilon_K$;
\item[\iS4] $\xymatrix@=30pt{Y \ar[r]^-s \ar@{-->}@/_1pc/[rr]_-{q\ci s=\overline{0}} & X \ar@{-->}[r]^-{q} & K,}$ i.e., $qP(s)=0$;
\item[\iS5] $\xymatrix@=30pt{0 \ar[r]^-{0_X} \ar@{-->}@/_1pc/[rr]_-{q\ci 0_X=\overline{0_K}} & X \ar@{-->}[r]^-{q} & K,}$ i.e., $qP(0_X) (=q0_{P(X)})=0_K$;
\item[\iS6] $\xymatrix@C=100pt@R=30pt{Y\times K \ar[d]_-{s\times k} \ar@{-->}[r]^-{\langle k\circ q\circ \mu^X\circ (s\times k), \overline{s\pi_Y}\rangle}
 & X\times X \ar@{-->}[d]^-{\mu^X}\\
 X\times X \ar@{-->}[r]_-{\mu^X} & X, }$\\ i.e., $\mu^X\circ \langle k\circ q\circ \mu^X\circ (s\times k), \overline{s\pi_Y}\rangle = \mu^X \circ (s\times k)$.
\end{itemize}

In order to obtain an intrinsic version of \S7, we will need a further assumption, which will be discussed in the next section.

If we apply this intrinsic definition to the category $\Mon$ of monoids, we regain the original definition of a Schreier split epimorphism (=~right homogeneous split epimorphism). Also, left homogeneous split epimorphisms (see Remark~\ref{homogeneous}) fit the picture. Indeed:

\begin{theorem}~\cite[Theorem 5.10]{ise}
In the case of monoids, the intrinsic Schreier split epimorphisms with respect to the direct imaginary splitting $t^d$ are precisely the Schreier split epimorphisms. Similarly, the intrinsic Schreier split epimorphisms with respect to the twisted imaginary splitting $t^w$ are the left homogeneous split epimorphisms.\noproof
\end{theorem}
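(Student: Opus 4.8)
The plan is to unwind the intrinsic axioms \iS{1} and \iS{2} in the variety $\Mon$ of monoids, where the free-monoid comonad makes everything explicit. First I would record that an imaginary morphism $X\ito Y$ is a monoid homomorphism $P(X)\to Y$ out of the free monoid on the underlying set of $X$; by freeness it is determined by the arbitrary set map $\tilde{q}\colon X\to Y$, $x\mapsto q([x])$, which is exactly the data of a classical Schreier retraction, while a real morphism $\overline{f}$ is the homomorphism $f\varepsilon_X$. The first concrete computation is the imaginary addition $\mu^X=\bi{1_X}{1_X}t_{X,X}$: for the direct splitting $t^d$ one gets $\mu^X([(x,x')])=\bi{1_X}{1_X}(\iota_1(x)+\iota_2(x'))=x\cdot x'$, ordinary multiplication, whereas for the twisted splitting $t^w$ one gets $\mu^X([(x,x')])=x'\cdot x$, multiplication in the reverse order.

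Next I would reduce the two axioms to conditions on single generators. The decisive point is that both sides of \iS{1} are monoid homomorphisms $P(X)\to X$ and both sides of \iS{2} are homomorphisms $P(K\times Y)\to K$; since $P(X)$ and $P(K\times Y)$ are free, each equality holds if and only if it holds on the generators $[x]$, respectively $[(b,y)]$. Moreover, the Kleisli composite $b\diam a$ is computed (Section~\ref{The Kleisli category}) as $b\,P(a)\,\delta$, and $\delta$ sends a generator $[x]$ to $[[x]]$; hence the $\diam$-composites in \iS{1} and \iS{2} evaluate pointwise on generators. Carrying this out for $t^d$, axiom \iS{1} on $[x]$ reads $k\tilde{q}(x)\cdot sf(x)=x$, which is precisely \S{1}, and \iS{2} on $[(b,y)]$ reads $\tilde{q}(k(b)\cdot s(y))=b$, which is precisely \S{2}.

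To conclude I would invoke the characterisation recalled above \cite[Proposition~2.4]{BM-FMS2}: a split epimorphism is Schreier exactly when it admits a retraction satisfying \S{1} and \S{2}. Combined with the previous paragraph, the existence of an imaginary retraction satisfying \iS{1} and \iS{2} for $t^d$ is therefore equivalent to the split epimorphism being a Schreier (right homogeneous) one. For $t^w$ the identical argument, now with the reversed product $\mu^X([(x,x')])=x'\cdot x$, turns \iS{1} into $sf(x)\cdot k\tilde{q}(x)=x$ and \iS{2} into $\tilde{q}(s(y)\cdot k(b))=b$, which are the defining conditions of a left homogeneous split epimorphism (the mirror image of \S{1}--\S{2}; cf.\ Remark~\ref{homogeneous}); so intrinsic Schreier with respect to $t^w$ coincides with left homogeneity. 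The main obstacle I anticipate is purely the bookkeeping: threading the data $\delta$, $\varepsilon$ and the splitting $t$ correctly through the Kleisli composites to justify the pointwise evaluation, and keeping track of the order reversal separating $t^d$ from $t^w$. The reduction to generators via freeness of $P$ is what keeps this manageable.
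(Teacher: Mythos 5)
Your proposal is correct: the reduction of \iS{1} and \iS{2} to their values on the free generators $[x]$ and $[(b,y)]$ (using that all Kleisli composites involved are monoid homomorphisms out of free monoids, with $\mu^X([(x,x')])=x\cdot x'$ for $t^d$ and $x'\cdot x$ for $t^w$) recovers exactly \S{1} and \S{2}, respectively their left-handed mirrors, and the appeal to \cite[Proposition~2.4]{BM-FMS2} (and its mirror for left homogeneity) closes the equivalence. The paper itself states this theorem without proof, citing \cite[Theorem 5.10]{ise}, and your unwinding is essentially the argument given there, so there is nothing to flag beyond noting that the bijection between imaginary morphisms $X\ito K$ and arbitrary functions $X\to K$ is what makes both directions of the equivalence immediate.
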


This result extends to J\'onsson--Tarski varieties~\cite{ise}.

\subsection{$\s$-protomodular categories~\cite{SchreierBook}}

We now recall the definition of an $\s$-protomodular category, with respect to a class $\s$ of points (i.e., of split epimorphisms with a fixed section) in a pointed category $\C$ with finite limits.

We denote by $\Pt(\C)$ the category of points in $\C$, whose morphisms are pairs of morphisms which form commutative squares with both the split epimorphisms and their sections. The functor $\cod\colon \Pt(\C) \to \C$ associates with every split epimorphism its codomain. It is a fibration, usually called the \defn{fibration of points}. For each object $Y$ of $\C$, we denote by $\Pt_Y(\C)$ the fibre of this fibration, whose objects are the points with codomain $Y$.

Let $\s$ be a class of points in $\C$ which is stable under pullbacks along any morphism. If we look at it as a full
subcategory $\SPt(\C)$ of $\Pt(\C)$, then it gives rise to a subfibration $\s$-$\cod$ of the fibration of points.

\begin{definition}\cite[Definition~8.1.1]{SchreierBook}\label{S-protomodular category}
Let $\C$ be a pointed finitely complete category, and $\s$ a
pullback-stable class of points. We say that $\C$ is
\defn{$\s$-proto\-modular} when:
\begin{enumerate}
\item every point in $\SPt(\C)$ is a strong point;
\item $\SPt(\C)$ is closed under finite limits in $\Pt(\C)$.
\end{enumerate}
\end{definition}

As shown in~\cite{S-proto}, $\s$-protomodular categories satisfy, relatively to the class $\s$, many of the properties of protomodular categories~\cite{Bourn protomod}. In particular, a relative version of the Split Short Five Lemma holds: given a morphism of $\s$-split extensions, i.e., a diagram
$$
\xymatrix@=40pt{K' \ar@{ |>->}[r]^-{k'} \ar[d]_-{\gamma} & X' \ar@{>>}@<-.5ex>[r]_-{f'} \ar[d]_-{g}
 & Y' \ar@<-.5ex>[l]_-{s'} \ar[d]^-{h}\\
 K \ar@{ |>->}[r]^-{k} & X \ar@{>>}@<-.5ex>[r]_-{f} & Y, \ar@<-.5ex>[l]_-{s}}
$$
such that the two rows are $\s$-split extensions (points in $\s$ with their kernel) and the three squares involving, respectively, the split epimorphisms, the kernels, and the sections commute, $g$ is an isomorphism if and only if both $\gamma$ and $h$ are isomorphisms. In Section \ref{Stability properties} we will show that, when $\s$ is the class of intrinsic Schreier split extensions, a stronger version of this lemma holds. Moreover, we will discuss the validity of other homological lemmas.

The following list of examples is given in chronological order; each one is a special case of the next one.
\begin{example}\cite{SchreierBook}
The category $\Mon$ of monoids is $\s$-protomodular with respect to the class $\s$ of Schreier split epimorphisms.
\end{example}

\begin{example}\cite{MartinsMontoli}
Every J\'onsson--Tarski variety is an $\s$-protomodular category with respect to the class $\s$ of Schreier split epimorphisms.
\end{example}

\begin{example}\cite{ise}\label{ex s-proto}
Every regular unital category with binary coproducts, equip\-ped with comonadic covers and a natural imaginary splitting, is $\s$-proto\-mo\-du\-lar with respect to the class $\s$ of intrinsic Schreier split epimorphisms. Consequently, such a split epimorphism, together with a chosen kernel, forms an \emph{intrinsic Schreier split extension}.
\end{example}

The reader may find several other examples in~\cite{BM-Nine-lemma}.

\section{The associativity axiom}\label{The associativity axiom}
In order to improve the behaviour of the intrinsic Schreier split extensions, it is useful to consider an additional assumption, concerning the associativity of the imaginary addition $\mu^X$.

Let $\C$ be a regular unital category with binary coproducts, comonadic covers and a natural imaginary splitting $t$. Suppose that, for every object $X$, the imaginary addition $\mu^X$ satisfies the associativity axiom $\mu^X\diam (\mu^X\times\overline{1_X}) = \mu^X\diam (\overline{1_X}\times\mu^X)$, i.e., the diagram
$$
\xymatrix@C=40pt@R=30pt{X\times X\times X \ar@{-->}[r]^-{\mu^X\times\overline{1_X}} \ar@{-->}[d]_-{\overline{1_X}\times\mu^X} & X\times X \ar@{-->}[d]^-{\mu^X} \\
 X\times X \ar@{-->}[r]_-{\mu^X} & X}
$$
commutes. This means that, for arbitrary imaginary morphisms $a$, $b$, $c\colon A\dashto X$, we get
\begin{equation}\label{assoc}
 \mu^X\diam \langle \mu^X\diam \langle a,b\rangle, c\rangle= \mu^X\diam \langle a, \mu^X\diam \langle b,c\rangle \rangle.
\end{equation}

\begin{example}
In $\Gp$ and in $\Mon$, the direct and twisted imaginary splittings induce associative imaginary additions.
\end{example}

Among the properties, listed in the previous section, of Schreier split epimorphisms of monoids, there is one, namely the property given by \S7, which uses the associativity of the monoid operation (for $X$). Hence it is not so surprising that we may prove its intrinsic version when we assume the associativity axiom.

\begin{proposition}\label{iS7} Suppose that the natural addition $(\mu^X\colon X\times X\dashto X)_{X\in \C}$ is associative. Given an intrinsic Schreier split extension \eqref{iSchreier} with imaginary retraction $q$, the following diagram commutes\\
\iS7 $\xymatrix@C=85pt@R=30pt{ X\times X \ar@{-->}[d]_-{\mu^X} \ar@{-->}[r]^-{\langle q\circ \pi_1, \overline{sf\pi_1}, k\circ q\circ \pi_2\rangle}
 & K\times X\times X \ar@{-->}[r]^-{\overline{1_K}\times (q\circ \mu^X)} & K\times K \ar@{-->}[d]^-{\mu^K} \\
 X \ar@{-->}[rr]_-q & & K}$\\ i.e., $\mu^K\circ (\overline{1_K}\times (q\circ \mu^X))\circ \langle q\circ \pi_1, \overline{sf\pi_1}, k\circ q\circ \pi_2\rangle = q\circ \mu^X$.
\end{proposition}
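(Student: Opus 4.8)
The plan is to reproduce, inside the Kleisli category $\ImC$, the classical derivation of \S{7} from \S1 and \S6, but carried out entirely through the Kleisli composition $\diam$, and to replace the classical appeal to uniqueness of the Schreier retraction by a direct application of \iS{2} at the very end. Abbreviate
\[
 w\coloneqq \mu^X\diam\langle \overline{sf\pi_1}, k\ci q\ci\pi_2\rangle\colon X\times X\dashto X,
\]
the imaginary morphism playing the role of ``$sf(x)\cdot kq(x')$''. Unravelling the middle arrow $\overline{1_K}\times(q\ci\mu^X)$ shows that the right-hand side of \iS{7} is $\rho\coloneqq \mu^K\diam\langle q\ci\pi_1, q\diam w\rangle$, the morphism ``$q(x)\cdot q(w)$'', so the goal is to prove $q\diam\mu^X=\rho$.

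First I would reduce this to the single identity
\[
 \mu^X\diam\langle k\diam\rho, \overline{sf}\diam\mu^X\rangle=\mu^X \qquad (\star)
\]
which no longer mentions $\mu^K$. Indeed, applying $q\diam(-)$ to $(\star)$, the left-hand side becomes $q\diam\mu^X\diam\langle k\diam\rho, \overline{sf}\diam\mu^X\rangle$; since products in $\ImC$ are those of $\C$, one has $\langle k\diam\rho, \overline{sf}\diam\mu^X\rangle=(k\times s)\diam\langle \rho, \overline{f}\diam\mu^X\rangle$, so using \iS{2} in the form $q\diam\mu^X\ci(k\times s)=\overline{\pi_K}$ this equals $\overline{\pi_K}\diam\langle \rho, \overline{f}\diam\mu^X\rangle=\rho$, whereas the right-hand side becomes $q\diam\mu^X$. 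Thus $(\star)$ forces $\rho=q\diam\mu^X$, which is \iS{7}. (The analogue of $(\star)$ with $q\diam\mu^X$ in place of $\rho$ is merely \iS{1} precomposed with $\mu^X$, so $(\star)$ expresses that $\rho$ and $q\diam\mu^X$ produce the same ``$k$-part'' of $\mu^X$ relative to $\overline{sf}\diam\mu^X$, which \iS{2} upgrades to equality.)

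To prove $(\star)$ I would first rewrite its two inner arguments. Naturality of $\mu$, equation~\eqref{naturality of mu}, applied to $k$ gives $k\diam\mu^K=\mu^X\diam(k\times k)$, whence $k\diam\rho=\mu^X\diam\langle k\diam q\ci\pi_1, k\diam q\diam w\rangle$ (``$kq(x)\cdot kq(w)$''); applied to $f$ and then to $s$ it gives $\overline{sf}\diam\mu^X=\mu^X\diam\langle \overline{sf\pi_1}, \overline{sf\pi_2}\rangle$ (``$sf(x)\cdot sf(x')$''). Substituting both into the left-hand side of $(\star)$ and reassociating by means of the associativity axiom~\eqref{assoc}---always with three imaginary morphisms sharing the domain $X\times X$---the expression becomes ``$kq(x)\cdot\bigl((kq(w)\cdot sf(x))\cdot sf(x')\bigr)$''. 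The crucial simplification is the intrinsic form of \S6: precomposing \iS{6} with $\psi\coloneqq\langle \overline{f\pi_1}, q\ci\pi_2\rangle\colon X\times X\dashto Y\times K$, for which $(\mu^X\ci(s\times k))\diam\psi=w$ and $\overline{s\pi_Y}\diam\psi=\overline{sf\pi_1}$, yields $\mu^X\diam\langle k\diam q\diam w, \overline{sf\pi_1}\rangle=w$, i.e.\ ``$kq(w)\cdot sf(x)=w$''. Applying this, then replacing $w$ by its definition and reassociating once more, reduces the bracket to ``$sf(x)\cdot(kq(x')\cdot sf(x'))$''; two uses of \iS{1} (precomposed with $\pi_2$, then with $\pi_1$) collapse $kq(x')\cdot sf(x')$ to $\overline{\pi_2}$ and $kq(x)\cdot sf(x)$ to $\overline{\pi_1}$, leaving $\mu^X\diam\langle \overline{\pi_1}, \overline{\pi_2}\rangle=\mu^X\diam\overline{1_{X\times X}}=\mu^X$, which is $(\star)$.

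The main obstacle I expect is bookkeeping rather than conceptual: one must consistently track the Kleisli composition $\diam$ versus the mixed composition $\ci$ (Remark~\ref{pps of diamond}), the interchange of pairing with precomposition, and above all verify at each reassociation that the three arguments fed to~\eqref{assoc} genuinely share the domain $X\times X$. The one genuinely delicate point is extracting the usable form of \S6: \iS{6} is stated on the domain $Y\times K$, and it must be precomposed with exactly the right imaginary morphism $\psi$ and simplified---using the product projections and $\overline{s}\diam\overline{f}=\overline{sf}$---before it can be recognised as the identity ``$kq(w)\cdot sf(x)=w$'' needed in the middle of the computation.
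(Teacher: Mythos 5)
Your proposal is correct and follows essentially the same route as the paper: your reduction of \iS7 to the identity $(\star)$ by applying $q$ and cancelling via \iS2 is exactly the paper's Lemma~\ref{aux} (inlined), with the comparison side supplied by \iS1 precomposed with $\mu^X$, just as in the paper. The computation you outline for $(\star)$---naturality of $\mu$ applied to $k$, $s$ and $f$, the reassociations via \eqref{assoc}, the precomposition of \iS6 with $\langle \overline{f\pi_1}, q\ci \pi_2\rangle$ to obtain $\mu^X\diam\langle k\diam q\diam w, \overline{sf\pi_1}\rangle = w$, and the two final applications of \iS1---reproduces the paper's chain of equalities step for step.
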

\begin{proof}
Using Lemma~\ref{aux} below, it suffices to prove that
\begin{align*}
 &\mu^X\circ \langle k\circ \mu^K\circ (\overline{1_K}\times (q\circ \mu^X))\circ \langle q\circ \pi_1, \overline{sf\pi_1}, k\circ q\circ \pi_2\rangle, \overline{sf}\circ \mu^X \rangle \\
 &= \mu^X \circ \langle k\circ q\circ \mu^X, \overline{sf}\circ \mu^X\rangle
\end{align*}
which, by $\iS1$, is the same as
\[
\mu^X\circ \langle k\circ \mu^K\circ (\overline{1_K}\times (q\circ \mu^X))\circ \langle q\circ \pi_1, \overline{sf\pi_1}, k\circ q\circ \pi_2\rangle, \overline{sf}\circ \mu^X \rangle = \mu^X.
\]
We have
\begin{align*}
 & \mu^X\circ \langle k\circ \mu^K\circ (\overline{1_K}\times (q\circ \mu^X))\circ \langle q\circ \pi_1, \overline{sf\pi_1}, k\circ q\circ \pi_2\rangle, \overline{sf}\circ \mu^X \rangle \\
& \underset{\qquad}{\stackrel{\eqref{naturality of mu}}{=}} \mu^X\circ \langle \mu^X \circ (k\times k)\circ \langle q\circ \pi_1, q\circ \mu^X \circ \langle \overline{sf\pi_1}, k\circ q\circ \pi_2 \rangle\rangle, \mu^X \circ (\overline{sf}\times \overline{sf}) \rangle \\
& \underset{\qquad}{=} \mu^X\circ \bigl\langle \mu^X \circ \bigl\langle k\circ q\circ \pi_1, k\circ q\circ \mu^X \circ \langle \overline{sf\pi_1}, k\circ q\circ \pi_2 \rangle\bigr\rangle, \mu^X \circ (\overline{sf}\times \overline{sf}) \bigr\rangle \\
& \underset{\qquad}{\stackrel{\eqref{assoc}}{=}} \mu^X\circ \bigl\langle k\circ q\circ \pi_1,\mu^X\circ \bigl\langle k\circ q\circ \mu^X \circ \langle \overline{sf\pi_1}, k\circ q\circ \pi_2 \rangle, \mu^X \circ (\overline{sf}\times \overline{sf})\bigr \rangle \bigr\rangle \\
& \underset{\qquad}{=} \mu^X\circ \Bigl\langle k\circ q\circ \pi_1,\mu^X\circ \bigl\langle k\circ q\circ \mu^X \circ \langle \overline{sf\pi_1}, k\circ q\circ \pi_2 \rangle, \mu^X \circ \bigl\langle \overline{sf\pi_1}, \overline{sf\pi_2}\bigr\rangle \bigr\rangle \Bigr\rangle \\
& \underset{\qquad}{\stackrel{\eqref{assoc}}{=}} \mu^X\circ \Bigl\langle k\circ q\circ \pi_1,\mu^X\circ \bigl\langle \mu^X \circ \bigl\langle k\circ q\circ \mu^X \circ \langle \overline{sf\pi_1}, k\circ q\circ \pi_2 \rangle, \overline{sf\pi_1}\bigr\rangle, \overline{sf\pi_2}\bigr\rangle \Bigr\rangle \\
& \underset{\qquad}{=} \mu^X\circ \langle k\circ q\circ \pi_1,\mu^X\circ \langle \mu^X \circ \langle k\circ q\circ \mu^X \circ (s\times k)\circ \langle \overline{f\pi_1}, q\circ \pi_2 \rangle,\\
 & \phantom{\underset{\qquad}{=} \mu^X\circ \langle k\circ q\circ \pi_1,\mu^X\circ \langle \mu^X \circ \langle}\overline{s\pi_Y}\circ \langle \overline{f\pi_1}, q\circ \pi_2\rangle\rangle , \overline{sf\pi_2}\rangle \rangle \\
 &\underset{\qquad}{=} \mu^X\circ \langle k\circ q\circ \pi_1,\\
&\phantom{\underset{\qquad}{=} \mu^X\circ \langle}\mu^X\circ \langle \mu^X \circ \langle k\circ q\circ \mu^X \circ (s\times k),\overline{s\pi_Y}\rangle \circ \langle \overline{f\pi_1}, q\circ \pi_2\rangle , \overline{sf\pi_2}\rangle \rangle \\
& \underset{\qquad}{\stackrel{\iS6}{=}} \mu^X\circ \langle k\circ q\circ \pi_1,\mu^X\circ \langle \mu^X \circ (s\times k) \circ \langle \overline{f\pi_1}, q\circ \pi_2\rangle , \overline{sf\pi_2}\rangle \rangle \\
& \underset{\qquad}{=} \mu^X\circ \langle k\circ q\circ \pi_1,\mu^X\circ \langle \mu^X \circ \langle \overline{sf\pi_1}, k\circ q\circ \pi_2\rangle , \overline{sf\pi_2}\rangle \rangle \\
& \underset{\qquad}{ \stackrel{\eqref{assoc}}{=}} \mu^X\circ \langle k\circ q\circ \pi_1,\mu^X\circ \langle \overline{sf\pi_1}, \mu^X\circ \langle k\circ q\circ \pi_2, \overline{sf\pi_2}\rangle \rangle \rangle \\
 &\underset{\qquad}{\stackrel{\iS1}{=}} \mu^X\circ \langle k\circ q\circ \pi_1,\mu^X\circ \langle \overline{sf\pi_1}, \overline{\pi_2} \rangle \rangle 
 \underset{\qquad}{ \stackrel{\eqref{assoc}}{=}} \mu^X\circ \langle \mu^X\circ \langle k\circ q\circ \pi_1,\overline{sf\pi_1} \rangle, \overline{\pi_2} \rangle \\
 &\underset{\qquad}{\stackrel{\iS1}{=}} \mu^X\circ \langle \overline{\pi_1},\overline{\pi_2}\rangle 
 \underset{\qquad}{=} \mu^X.\qedhere
\end{align*}
\end{proof}

\begin{lemma}\label{aux}Let \eqref{iSchreier} be a split epimorphism with an imaginary morphism $q$ such that \iS2 holds. If
$$ \mu^X \circ \langle k\circ a, s\circ b\rangle = \mu^X \circ \langle k\circ c, s\circ d\rangle,$$
where $a$, $c\colon A \dashto K$ and $b$, $d\colon A\dashto Y$ are imaginary morphisms, then $a=c$.
\end{lemma}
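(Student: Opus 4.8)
The plan is to apply the imaginary retraction $q$ to both sides of the hypothesis and then to read off the first component using \iS2; everything takes place in the Kleisli category $\ImC$, where $\mu^X$, $a$, $b$, $c$, $d$ are imaginary arrows but $k$, $s$, $\pi_K$ are (images of) real ones.

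First I would observe that the paired imaginary morphisms factor through the real morphism $k\times s$. Since $k$ and $s$ are real, the composition rules of Subsection~\ref{Imaginary morphisms induced from comonadic covers} give
$$\langle k\circ a, s\circ b\rangle = (k\times s)\circ \langle a, b\rangle \qquad\text{and}\qquad \langle k\circ c, s\circ d\rangle = (k\times s)\circ \langle c, d\rangle$$
as imaginary morphisms $A\dashto X\times X$, where $\langle a, b\rangle$, $\langle c, d\rangle\colon A\dashto K\times Y$.

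Next I would post-compose the hypothesis with $q$. Using the associativity of the Kleisli composition $\diam$ together with the factorisation above, the equality $\mu^X\circ \langle k\circ a, s\circ b\rangle = \mu^X\circ\langle k\circ c, s\circ d\rangle$ becomes
$$\bigl(q\diam \mu^X\circ (k\times s)\bigr)\diam \langle a, b\rangle = \bigl(q\diam \mu^X\circ (k\times s)\bigr)\diam \langle c, d\rangle.$$
By \iS2 the bracketed composite is exactly $\overline{\pi_K}$, so this reduces to $\overline{\pi_K}\diam \langle a, b\rangle = \overline{\pi_K}\diam \langle c, d\rangle$. Finally, because $\pi_K$ is a real morphism, $\overline{\pi_K}\diam\langle a, b\rangle = \pi_K\langle a, b\rangle = a$ and likewise $\overline{\pi_K}\diam\langle c, d\rangle = c$; hence $a = c$, as required.

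The argument is short, and the only delicate point is the bookkeeping of the mixed Kleisli compositions: justifying the factorisation through $k\times s$ and the regrouping that makes \iS2 applicable. This is routine once one keeps track of which morphisms are real, since for those the $\diam$-composition collapses to the plain composition in $\C$ by Remark~\ref{pps of diamond}.
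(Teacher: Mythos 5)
Your proposal is correct and matches the paper's own proof essentially step for step: apply $q$ to both sides, factor the pairings through the real morphism $k\times s$, invoke \iS2 to replace $q\diam \mu^X\circ (k\times s)$ by $\overline{\pi_K}$, and project to conclude $a=c$. The extra bookkeeping you supply about mixed Kleisli compositions (via Remark~\ref{pps of diamond}) is sound but does not change the argument.
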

\begin{proof}
\begin{align*}
& \mu^X\circ \langle k\circ a, s\circ b\rangle = \mu^X \circ \langle k\circ c, s\circ d\rangle \\
& \stackrel{\phantom{\iS2}}{\Rightarrow}\; q\circ \mu^X \circ \langle k\circ a, s\circ b\rangle = q\circ \mu^X \circ \langle k\circ c, s\circ d\rangle \\
 &\stackrel{\phantom{\iS2}}{\Rightarrow}\; q\circ \mu^X \circ (k\times s) \circ \langle a,b\rangle = q\circ \mu^X \circ (k\times s)\circ \langle c, d\rangle \\
 &\stackrel{\iS2}{\Rightarrow}\; \overline{\pi_K}\circ \langle a,b\rangle = \overline{\pi_K}\circ \langle c, d\rangle \\
 &\stackrel{\phantom{\iS2}}{\Rightarrow}\; a=c.\qedhere
\end{align*}
\end{proof}

\begin{remark}
Of course in the situation of Lemma~\ref{aux}, we also have $b=d$---independently of whether \iS2 holds. Indeed, $f\circ\mu^X\circ \langle k\circ a, s\circ b\rangle =\mu^Y\circ \langle f\circ k\circ a, f\circ s\circ b\rangle=\mu^Y\circ \langle 0,b\rangle=b$, and by a similar argument we see that $f\circ\mu^X\circ \langle k\circ a, s\circ d\rangle =d$.
\end{remark}

\begin{remark}\label{q unique} As an immediate consequence of Lemma~\ref{aux}, we obtain the uniqueness of the imaginary retraction for any intrinsic Schreier split extension \eqref{iSchreier} (which was already known from~\cite[Proposition 5.3]{ise}). Given two possible imaginary retractions $q$, $q'\colon X\dashto K$, \iS1 gives
$$ \mu^X\circ \langle k\circ q, \overline{sf}\rangle = \overline{1_X} = \mu^X \circ \langle k\circ q', \overline{sf}\rangle; $$
consequently, $q=q'$.
\end{remark}

\begin{remark}
The referee suggested that we could replace the associativity condition by a weaker requirement adapted from Definition 1.4 in \cite{GJS}. We checked that this would indeed work for Proposition \ref{iS7}, but chose to keep the current, less general approach for the sake of simplicity.
\end{remark}

\section{Stability properties and homological lemmas}\label{Stability properties}
In this section we prove that certain stability properties for Schreier extensions of monoids shown in ~\cite{SchreierBook} still hold for intrinsic Schreier extensions in our context: $\C$ will denote a regular unital category with binary coproducts, comonadic covers and a natural imaginary splitting $t$. Moreover, we will observe that some of these stability properties allow to extend the validity of some classical homological lemmas to our intrinsic context.

Wherever we extend a proof in~\cite{SchreierBook} which uses the associativity of the monoid operations, we assume the associativity axiom holds. This is the case, in particular, of the first stability property we consider:

\begin{proposition}\emph{(See~\cite[Proposition~2.3.2]{SchreierBook})}\label{2.3.2, part 1} Suppose that the natural addition $(\mu^X\colon X\times X\dashto X)_{X\in \C}$ is associative. Then the intrinsic Schreier split extensions are stable under composition.
\end{proposition}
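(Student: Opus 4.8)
The plan is to imitate the monoid argument of \cite[Proposition~2.3.2]{SchreierBook}, assembling the imaginary retraction of the composite out of those of the two factors. So let two composable intrinsic Schreier split extensions be given: one with $f\colon X\to Y$, section $s$ and kernel $k\colon K\to X$, imaginary retraction $q_f\colon X\dashto K$; the other with $g\colon Y\to Z$, section $u$ and kernel $l\colon L\to Y$, imaginary retraction $q_g\colon Y\dashto L$. Their composite is the split epimorphism $(gf, su)$, and its chosen kernel $k_N\colon N\to X$ is the kernel of $gf$ in $\C$. First I would build a candidate retraction $Q\colon X\dashto N$ from the imaginary morphism $n_X\coloneq \mu^X\diam\langle k\ci q_f, sl\ci q_g\ci f\rangle\colon X\dashto X$, the intrinsic analogue of $x\mapsto kq_f(x)\cdot slq_gf(x)$. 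Using the naturality of $\mu$ \eqref{naturality of mu} together with $fk=0$ and $gl=0$, one gets $gf\ci n_X=\mu^Z\diam\langle\overline 0,\overline 0\rangle=\overline 0$; hence the underlying morphism $n_X\colon P(X)\to X$ factors through the kernel $N$, which defines $Q$ by the equation $k_N\ci Q=n_X$.

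Checking \iS1 for the composite should be quick. Since $k_N\ci Q=n_X$ and the composite section is $su$, associativity \eqref{assoc} regroups $\mu^X\diam\langle k_N\ci Q,\overline{sugf}\rangle$ as $\mu^X\diam\langle k\ci q_f,\ \mu^X\diam\langle sl\ci q_g\ci f,\overline{sugf}\rangle\rangle$. The inner term factors through $s$, so by naturality of $\mu$ it equals $s\ci(\mu^Y\diam\langle l\ci q_g,\overline{ug}\rangle)\ci f$, which collapses to $\overline{sf}$ by \iS1 for $(g,u)$. What is left is $\mu^X\diam\langle k\ci q_f,\overline{sf}\rangle=\overline{1_X}$, which is exactly \iS1 for $(f,s)$.

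The main obstacle is \iS2, that is, $Q\diam\mu^X\ci(k_N\times su)=\overline{\pi_N}$. Since $k_N$ is a monomorphism, postcomposition with it is injective on imaginary morphisms, so it suffices to prove $n_X\diam P=\overline{k_N\pi_N}$, where $P\coloneq\mu^X\ci(k_N\times su)\colon N\times Z\dashto X$ represents $k_N(n)\cdot su(z)$. As $\ImC$ carries the finite products of $\C$, the pairing distributes over $\diam$, and the claim splits into a $K$-component $q_f\diam P$ and an $L$-component $q_g\ci f\diam P$. This is where associativity is indispensable: applying \iS7 for $(f,s)$ (precomposed with $k_N\times su$), and using \iS4 for $(f,s)$ together with the unit law \eqref{x+0=x}, reduces $q_f\diam P$ to $q_f\ci k_N\ci\pi_N$; symmetrically, \iS7, \iS4 and \iS5 for $(g,u)$, combined with $gfk_N=0$, reduce $q_g\ci f\diam P$ to $q_g\ci f\ci k_N\ci\pi_N$. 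Recombining through $\mu^X$ yields $n_X\diam P=n_X\ci k_N\ci\pi_N$. It then remains to identify $n_X\ci k_N$ with $\overline{k_N}$: here $fk_N$ factors through $L=\ker g$, so \iS3 for $(g,u)$ gives $sl\ci q_g\ci f\ci k_N=\overline{sf}\ci k_N$, whence $n_X\ci k_N=(\mu^X\diam\langle k\ci q_f,\overline{sf}\rangle)\ci k_N=\overline{1_X}\ci k_N=\overline{k_N}$ by \iS1 for $(f,s)$. Combining the two computations gives $n_X\diam P=\overline{k_N}\ci\pi_N=\overline{k_N\pi_N}$, and cancelling the monomorphism $k_N$ establishes \iS2. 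I expect the two reductions via \iS7 to be the delicate part, since one must track how the section component $su(z)$ is annihilated by each retraction; the associativity hypothesis enters exactly through these uses of \iS7 (and through \eqref{assoc} in the regroupings above), which is why it cannot be omitted.
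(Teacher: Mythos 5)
Your proposal is correct and follows essentially the same route as the paper's proof: the same candidate retraction $\mu^X \diam \langle k \ci q_f,\, sl \ci q_g \ci f\rangle$ factored through the kernel of $gf$, the same regrouping argument for \iS1, and \iS2 established by cancelling the kernel monomorphism after distributing the pairing over the composite. The only local deviations---checking the kernel factorisation via the naturality of $\mu$ rather than an explicit computation with $t$, and reducing the $L$-component by \iS7, \iS4 and \iS5 for $(g,u)$ together with \iS3 at the end, where the paper invokes \iS2 for $(g,t)$ via the pullback-induced morphism directly---are minor tactical variants, and both check out.
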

\begin{proof} Suppose that
$$
 \xymatrix@!0@=5em{ K \ar@<-.5ex>@{ |>->}[r]_-{k} & X \ar@<-.5ex>@{>>}[r]_-{f} \ar@<-.5ex>@{-->}[l]_-{q_f} & Y \ar@<-.5ex>[l]_-{s}}
$$
and
$$
 \xymatrix@!0@=5em{ L \ar@<-.5ex>@{ |>->}[r]_-{l} & Y \ar@<-.5ex>@{>>}[r]_-{g} \ar@<-.5ex>@{-->}[l]_-{q_g} & Z \ar@<-.5ex>[l]_-{t}}
$$
are intrinsic Schreier extensions. We want to prove that
$$
 \xymatrix@!0@=5em{ M \ar@{ |>->}[r]_-{m} & X \ar@<-.5ex>@{>>}[r]_-{gf} & Z \ar@<-.5ex>[l]_-{st}}
$$
is an intrinsic Schreier extension, where $m$ is the kernel of $gf$. Consider the following diagram where both squares are pullbacks:
$$
 \xymatrix@=30pt{ M \ar@{ |>->}[r]^-m \ar@<-.5ex>[d]_-{f'} \ophalfsplitpullback & X \ar@<-.5ex>[d]_-f \\
 L \ar@<-.5ex>[u]_-{s'} \ar@{ |>->}[r]^-l \ar[d] \pullback & Y \ar@<-.5ex>[u]_-{s} \ar@<-.5ex>[d]_-{g} \\
 0 \ar[r] & Z. \ar@<-.5ex>[u]_-{t} }
$$
We must provide an imaginary retraction $q\colon X\dashto M$. The imaginary morphism $\mu^X\circ \langle k\circ q_f, sl\circ q_g\circ f\rangle\colon X\dashto X$ is such that composing with $gf$ gives the following equalities in $\C$:
\begin{align*}
& gf\bi{1_X}{1_X}t_{X,X}P(\langle kq_f, slq_gP(f)\rangle) \delta_X \\
 &= gf\bi{1_X}{1_X}t_{X,X}P(kq_f\times slq_gP(f)) P(\langle 1_{P(X)},1_{P(X)}\rangle) \delta_X \\
 &\stackrel{\mathclap{\eqref{naturality of t}}}{=} gf\bi{1_X}{1_X}( kq_f + slq_gP(f) t_{P(X),P(X)}P(\langle 1_{P(X)},1_{P(X)}\rangle) \delta_X \\
 &= \bi{gfkq_f}{gfslq_gP(f)} t_{P(X),P(X)}P(\langle 1_{P(X)},1_{P(X)}\rangle) \delta_X 
 = 0.	
\end{align*}
This gives a unique morphism $q\colon P(X)\to M$ in $\C$, i.e., an imaginary morphism $q\colon X \dashto M$, such that $m\circ q= \mu^X\circ \langle k\circ q_f, sl\circ q_g\circ f\rangle$. We prove that this $q$ is the imaginary retraction for the split epimorphism $gf$. We start with \iS1 as in
\begin{align*}
 \mu^X \circ \langle m\circ q, \overline{stgf}\rangle 
 &= \mu^X \circ \langle \mu^X\circ \langle k\circ q_f, sl\circ q_g\circ f\rangle, \overline{stgf}\rangle \\
& \stackrel{\mathclap{\eqref{assoc}}}{=} \mu^X\circ \langle k\circ q_f, \mu^X\circ \langle sl\circ q_g\circ f, \overline{stgf} \rangle \rangle \\
 &= \mu^X\circ \langle k\circ q_f, \mu^X\circ (s\times s)\circ \langle l\circ q_g, \overline{tg} \rangle \circ f\rangle \\
 &\stackrel{\mathclap{\eqref{naturality of mu}}}{=} \mu^X\circ \langle k\circ q_f, s\circ \mu^Y\circ \langle l\circ q_g, \overline{tg} \rangle\circ f \rangle 
 = \overline{1_X},	
\end{align*}
where in the last step we use \iS1 for $g$ and then \iS1 for $f$.

Finally, we prove that $m\circ q\circ \mu^X\circ (m\times st) = m\circ \overline{\pi_M}$ and use the fact that $m$ is a monomorphism, to conclude \iS2. We start with
\begin{align*}
& m\circ q\circ \mu^X\circ (m\times st) \\
 &= \mu^X \circ \langle k\circ q_f, sl\circ q_g\circ f\rangle \circ \mu^X\circ (m\times st) \\
 &= \mu^X \circ \langle k\circ q_f \circ \mu^X\circ (m\times st), sl\circ q_g\circ f \circ \mu^X\circ (m\times st)\rangle \\
 &\stackrel{\mathclap{\eqref{naturality of mu}}}{=} \mu^X \circ \langle k\circ q_f \circ \mu^X\circ (m\times st), sl\circ q_g\circ \mu^Y\circ (f\times f)\circ (m\times st)\rangle \\
 &= \mu^X \circ \langle k\circ q_f \circ \mu^X\circ (m\times st), sl\circ q_g\circ \mu^Y\circ (fm\times fst)\rangle \\
 &= \mu^X \circ \langle k\circ q_f \circ \mu^X\circ (m\times st), sl\circ q_g\circ \mu^Y\circ (lf'\times t)\rangle \\
 &= \mu^X \circ \langle k\circ q_f \circ \mu^X\circ (m\times st), sl\circ q_g\circ \mu^Y\circ (l\times t) \circ (f'\times 1_Z)\rangle \\
 &= \mu^X \circ \langle k\circ q_f \circ \mu^X\circ (m\times st), sl\circ \overline{\pi_L}\circ (f'\times 1_Z)\rangle,	
\end{align*}
where in the last equality we use \iS2 for $g$. Now we use $sl\circ \overline{\pi_L}\circ (f'\times 1_Z)=\overline{sfm\pi_M}$ and \iS7 applied to $f$. This gives
\begin{align*}
 & m\circ q\circ \mu^X\circ (m\times st) \\
 &= \mu^X \circ \langle k\circ \mu^K\circ (\overline{1_K}\times (q_f\circ \mu^X))\circ \langle q_f\circ \pi_1, \overline{sf\pi_1},k\circ q_f\circ \pi_2\rangle 
 \circ(m\times st),\\
&\phantom{= \mu^X \circ \langle }\overline{sfm\pi_M} \rangle \\
 &= \mu^X \circ \langle k\circ \mu^K\circ \langle q_f\circ \pi_1\circ (m\times st), q_f\circ\mu^X\circ \langle \overline{sf\pi_1},k\circ q_f\circ \pi_2\rangle 
 \circ(m\times st)\rangle ,\\ 
&\phantom{= \mu^X \circ \langle }\overline{sfm\pi_M} \rangle.	
\end{align*}
Note that a part of the composite above is
\begin{align*}
 & q_f\circ \mu^X\circ \langle \overline{sf\pi_1}, k \circ q_f\circ \pi_2\rangle \circ (m\times st) \\
 &\stackrel{\mathclap{\eqref{naturality of mu}}}{=} \mu^K \circ (q_f\times q_f)\circ \langle \overline{sf\pi_1}, k\circ q_f\circ \pi_2\rangle \circ (m\times st) \\
 &= \mu^K \circ \langle q_f \circ \overline{sf\pi_1}\circ (m\times st), q_f\circ k\circ q_f\circ \pi_2 (m\times st) \rangle \\
& \stackrel{\mathclap{\iS3}}{=} \mu^K \circ \langle q_f \circ \overline{sf\pi_1}\circ (m\times st), q_f\circ st\pi_2 \rangle 
 \stackrel{\iS4}{=} 0.	
\end{align*}
Thus,
\begin{align*}
m\circ q\circ \mu^X\circ (m\times st) 
&\;\stackrel{\mathclap{\eqref{x+0=x}}}{=} \;\mu^X \circ \langle k\circ \mu^K\circ \langle 1_K,0\rangle\circ q_f\circ \overline{m\pi_M}, \overline{sfm\pi_M}\rangle \\
 &\;\stackrel{\mathclap{\eqref{x+0=x}}}{=} \;\mu^X\circ \langle k\circ q_f,\overline{sf}\rangle \circ \overline{m\pi_M}
 \\&\;\stackrel{\mathclap{\iS1}}{=}\; \overline{1_X}\circ \overline{m\pi_M} = m\circ \overline{\pi_M}.\qedhere
\end{align*}
\end{proof}

\begin{proposition}\emph{(See~\cite[Proposition~2.3.2]{SchreierBook})}\label{2.3.2, part 2} Consider split epimorphisms
$$
 \xymatrix@!0@=5em{X \ar@<-.5ex>[r]_-f & Y \ar@<-.5ex>[r]_-g \ar@<-.5ex>[l]_-s & Z \ar@<-.5ex>[l]_-t}
$$
in $\C$. If $(gf,st)$ is an intrinsic Schreier split extension, then so is $(g,t)$.
\end{proposition}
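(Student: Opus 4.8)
The plan is to build the imaginary retraction of $(g,t)$ explicitly out of the one we are handed for $(gf,st)$. Write $l\colon L\to Y$ for a chosen kernel of $g$ and $m\colon M\to X$ for a chosen kernel of $gf$, and let $q\colon X\dashto M$ be the imaginary retraction of $(gf,st)$ granted by the hypothesis. First I would produce the two comparison morphisms between the kernels. Since $g(fm)=(gf)m=0$, the real morphism $fm$ factors through $l$, giving $f'\colon M\to L$ with $lf'=fm$; and since $(gf)(sl)=g(fs)l=gl=0$, the real morphism $sl$ factors through $m$, giving $s'\colon L\to M$ with $ms'=sl$. As $l$ is a monomorphism and $fs=1_Y$, the identity $lf's'=fms'=fsl=l$ forces $f's'=1_L$. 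With these in hand I would define the candidate imaginary retraction
\[
q_g \coloneq f'\circ q\circ s\colon Y\dashto L
\]
(that is, the morphism $f'\,q\,P(s)\colon P(Y)\to L$), and then check \iS1 and \iS2 for the point $(g,t)$ with kernel $l$; uniqueness is then automatic by Remark~\ref{q unique}.

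For \iS1 I would start from the identity \iS1 for $(gf,st)$, namely $\mu^X\circ\langle m\circ q,\overline{stgf}\rangle=\overline{1_X}$, precompose it with the real morphism $s$, and then postcompose with the real morphism $f$. Precomposition with $s$ turns the right-hand side into $\overline{s}$ and, using $gfs=g$, turns the second component into $\overline{stg}$, giving $\mu^X\circ\langle m\circ q\circ s,\overline{stg}\rangle=\overline{s}$. Postcomposing with $f$ and pushing $f$ through $\mu^X$ by naturality \eqref{naturality of mu}, I would rewrite the first component using $fm=lf'$ as $l\circ q_g$ and the second using $fst=t$ as $\overline{tg}$, while the right-hand side becomes $\overline{fs}=\overline{1_Y}$. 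This yields exactly $\mu^Y\circ\langle l\circ q_g,\overline{tg}\rangle=\overline{1_Y}$, which is \iS1 for $(g,t)$.

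For \iS2 I would compute $q_g\circ\mu^Y\circ(l\times t)$ directly. Expanding $q_g=f'\circ q\circ s$ and pushing $s$ through $\mu^Y$ by naturality \eqref{naturality of mu} gives $s\circ\mu^Y\circ(l\times t)=\mu^X\circ(sl\times st)$; factoring $sl\times st=(m\times st)\circ(s'\times 1_Z)$ via $sl=ms'$ then lets me apply \iS2 for $(gf,st)$, i.e.\ $q\circ\mu^X\circ(m\times st)=\overline{\pi_M}$. What remains is $f'\circ\overline{\pi_M}\circ(s'\times 1_Z)=\overline{f's'\pi_L}=\overline{\pi_L}$, where the last step uses $f's'=1_L$; this is \iS2 for $(g,t)$.

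The only delicate point is bookkeeping in the Kleisli/imaginary calculus: one must keep track of which factors are real and which are imaginary, and repeatedly use that pre- and postcomposition with real morphisms is associative and compatible with pairings (so that reals may be slid through $\langle-,-\rangle$ and regrouped). No genuinely new obstacle arises beyond this. In particular, and in contrast with Proposition~\ref{2.3.2, part 1}, this argument never invokes the associativity of $\mu$, so the statement holds without that axiom.
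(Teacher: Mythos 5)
Your proposal is correct and takes essentially the same route as the paper's proof: you choose the same candidate retraction $q_g=f'\circ q\circ s$ and verify \iS1 and \iS2 by the same naturality manipulations, differing only in inessential details (you build $f'$, $s'$ from the universal properties of the kernels rather than the pullback diagram of Proposition~\ref{2.3.2, part 1}, and in \iS1 you precompose with $s$ before postcomposing with $f$, where the paper does the reverse). Your closing observation also matches the paper, whose statement of this proposition indeed omits the associativity hypothesis required in Proposition~\ref{2.3.2, part 1}.
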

\begin{proof}
We use the same notation as in Proposition~\ref{2.3.2, part 1}. We claim that the needed imaginary retraction for $g$ is $q_g=f'\circ q \circ s\colon Y\dashto L$. From \iS1 for $gf$, we have
\begin{align*}
\mu^X\circ \langle m\circ q, \overline{stgf}\rangle = \overline{1_X} & \quad\Rightarrow\quad f \circ \mu^X\circ \langle m\circ q, \overline{stgf}\rangle = \overline{f} \\
 & \quad\stackrel{\mathclap{\eqref{naturality of mu}}}{\Rightarrow}\quad \mu^Y \circ \langle fm\circ q, \overline{tgf}\rangle = \overline{f}.
\end{align*}
Then
\begin{align*}
\mu^Y\circ \langle l\circ q_g,\overline{tg}\rangle & = \mu^Y \circ \langle lf'\circ q\circ s, \overline{tg}\rangle 
 = \mu^Y\circ \langle fm\circ q\circ s, \overline{tgfs}\rangle \\
 & = \mu^Y\circ \langle fm\circ q, \overline{tgf}\rangle\circ s 
 = \overline{f}\circ s 
 = \overline{1_Y},	
\end{align*}
which proves \iS1 for $g$. As for \iS2 for $g$, we have
\begin{align*}
q_g\circ \mu^Y\circ (l\times t) & = f'\circ q\circ s \circ \mu^Y\circ (l\times t) \\
 & \stackrel{\mathclap{\eqref{naturality of mu}}}{=} f'\circ q\circ \mu^X\circ (sl\times st) \\
 & = f'\circ q\circ \mu^X\circ (ms'\times st) \\
 & = f'\circ q\circ \mu^X\circ (m\times st) \circ (s'\times 1_Z) \\
 & = f'\circ \overline{\pi_M}\circ (s'\times 1_Z) 
 = f's'\circ \overline{\pi_L} 
 = \overline{\pi_L},	
\end{align*}
where we use \iS2 for $gf$ in the fifth equality.
\end{proof}

\begin{lemma}\label{aux2}Suppose that the values of $P$ are projective objects in $\C$. Let $a$, $b\colon A \dashto X$ be imaginary morphisms and $z\colon Z \twoheadrightarrow A$ a regular epimorphism. If $a\circ z=b\circ z$, then $a=b$.
\end{lemma}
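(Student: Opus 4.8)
The plan is to reduce the statement to the fact that $P(z)\colon P(Z)\to P(A)$ is an epimorphism in $\C$, and then simply to cancel it on the right. First recall that, since $z$ is a real morphism, the composites appearing in the hypothesis are the honest $\C$-morphisms $a\circ z = aP(z)$ and $b\circ z = bP(z)$ from $P(Z)$ to $X$ (Subsection~\ref{Imaginary morphisms induced from comonadic covers}). Thus the assumption $a\circ z=b\circ z$ reads $aP(z)=bP(z)$, while the conclusion $a=b$ is the equality of the two morphisms $a,b\colon P(A)\to X$. It therefore suffices to show that $P(z)$ is an epimorphism.

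Now $z\colon Z\twoheadrightarrow A$ is a regular epimorphism and, by the standing assumption, $P(A)$ is projective. This is exactly the situation treated in Remark~\ref{rem imaginary splitting}: projectivity of $P(A)$ provides a morphism $w\colon P(A)\to Z$ with $zw=\varepsilon_A$ (an imaginary splitting of $z$), and then
\[
 P(z)\,P(w)\,\delta_A = P(zw)\,\delta_A = P(\varepsilon_A)\,\delta_A \stackrel{\eqref{counit}}{=} 1_{P(A)},
\]
so that $P(z)$ is a split epimorphism with section $P(w)\,\delta_A$. In particular, $P(z)$ is an epimorphism.

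Finally, precomposing $aP(z)=bP(z)$ with the section $P(w)\,\delta_A$ yields $a=a\,1_{P(A)}=b\,1_{P(A)}=b$, which is the desired conclusion. There is no genuine obstacle here; the only point worth isolating is that the projectivity of the values of $P$ upgrades the regular epimorphism $z$ into a \emph{split} epimorphism $P(z)$ after applying $P$, exactly as already recorded in Remark~\ref{rem imaginary splitting}, and right-cancellation along split (hence ordinary) epimorphisms does the rest.
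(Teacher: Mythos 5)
Your proof is correct and takes essentially the same route as the paper: the paper likewise translates $a\circ z=b\circ z$ into the equality $aP(z)=bP(z)$ in $\C$ and cancels $P(z)$, citing Remark~\ref{rem imaginary splitting} for the fact that $P(z)$ is a split epimorphism. The only difference is that you spell out the content of that remark (the section $P(w)\,\delta_A$ built from projectivity of $P(A)$ and \eqref{counit}), where the paper simply refers to it.
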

\begin{proof} $a\circ z=b\circ z$ corresponds to the equality $aP(z)=bP(z)$ in $\C$. Then $a=b$, since $P(z)$ is a split epimorphism (see Remark~\ref{rem imaginary splitting}).
\end{proof}

In the following $\Eq(f)$ denotes the kernel pair of a morphism $f$.

\begin{proposition}\emph{(See~\cite[Proposition~2.3.5]{SchreierBook} and~\cite[Proposition 4.8]{BM-Nine-lemma}).}\label{2.3.5} Suppose that the values of $P$ are projective objects in $\C$. Consider the following commutative diagram
$$
 \xymatrix@=40pt{\Eq(\gamma) \ar@{ |>->}@<-.5ex>[r]_-{\kappa} \ar@<-1ex>[d]_-{\gamma_1} \ar@<1ex>[d]^-{\gamma_2} &
 \Eq(g) \ar@<-.5ex>@{-->}[l]_-{\rho} \ar@<-.5ex>@{>>}[r]_-{\varphi} \ar@<-1ex>[d]_-{g_1} \ar@<1ex>[d]^-{g_2} &
 \Eq(h) \ar@<-.5ex>[l]_-{\sigma} \ar@<-1ex>[d]_-{h_1} \ar@<1ex>[d]^-{h_2}\\
 K' \ar@{ |>->}@<-.5ex>[r]_-{k'} \ar[d]_-{\gamma} \ar[u] & X' \ar@<-.5ex>@{-->}[l]_-{q'} \ar@<-.5ex>@{>>}[r]_-{f'} \ar@{>>}[d]_-{g} \ar[u] & Y' \ar@<-.5ex>[l]_-{s'} \ar@{>>}[d]^-{h} \ar[u]\\
 K \ar@{ |>->}[r]_-{k} & X \ar@<-.5ex>@{>>}[r]_-f & Y. \ar@<-.5ex>[l]_-s}
$$
Note that, by the commutativity of limits, $\kappa$ is the kernel of $\varphi$. If the top two rows are intrinsic Schreier split extensions and $g$ and $h$ are regular epimorphisms, then the bottom row is also an intrinsic Schreier split extension.
\end{proposition}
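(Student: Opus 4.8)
The plan is to produce the imaginary retraction $q\colon X\dashto K$ for the bottom row by \emph{descending} the imaginary morphism $\gamma\circ q'\colon X'\dashto K$ along the regular epimorphism $g$, working inside the Kleisli category $\ImC$, and then to verify \iS1 and \iS2 for the bottom row by precomposing each required identity with a suitable regular epimorphism and cancelling it by means of Lemma~\ref{aux2}. Throughout I use that the three left-hand squares commute, i.e.\ $gk'=k\gamma$, $gs'=sh$ and $fg=hf'$, that $k=\ker f$, and that $\gamma$ is a regular epimorphism (this follows from $g$ and $h$ being regular epimorphisms together with the exactness of the two lower rows, as in the regular Short Five Lemma).

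First I would construct $q$. Since $P(X)$ is a projective object and $g$ is a regular epimorphism, $\varepsilon_X$ lifts through $g$; as in Remark~\ref{rem imaginary splitting} this makes $\overline g$ a split epimorphism in $\ImC$. In any category a split epimorphism is the coequaliser of its own kernel pair, and the embedding $I\colon\C\to\ImC$ preserves limits, so $\overline g$ is the coequaliser in $\ImC$ of $(\overline{g_1},\overline{g_2})$ on $\Eq(g)$. Hence a (necessarily unique) $q$ with $q\diamond\overline g=\gamma\circ q'$, that is $q\circ g=\gamma\circ q'$, exists as soon as $\gamma\circ q'$ coequalises this kernel pair, i.e.\ as soon as $\gamma\circ q'\circ g_1=\gamma\circ q'\circ g_2$.

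To obtain this last identity I would first prove the \emph{naturality of the retractions} along the top-to-middle morphism of extensions, namely $\gamma_i\circ\rho=q'\circ g_i$ for $i\in\{1,2\}$. Indeed $(g_i,\gamma_i,h_i)$ is a morphism of split extensions, so $k'\gamma_i=g_i\kappa$ and $s'f'g_i=g_i\sigma\varphi$. Applying Lemma~\ref{aux} to the middle extension, it suffices to check $\mu^{X'}\diamond\langle k'\circ(\gamma_i\circ\rho),\overline{s'f'g_i}\rangle=\mu^{X'}\diamond\langle k'\circ(q'\circ g_i),\overline{s'f'g_i}\rangle$: the right-hand side equals \iS1 for the middle row precomposed with $g_i$, hence $\overline{g_i}$, while the left-hand side rewrites, using $k'\gamma_i=g_i\kappa$, $s'f'g_i=g_i\sigma\varphi$ and the naturality of $\mu$ in \eqref{naturality of mu}, as $g_i\circ(\mu^{\Eq(g)}\diamond\langle\kappa\circ\rho,\overline{\sigma\varphi}\rangle)$, which is $\overline{g_i}$ by \iS1 for the top row. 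Since $\gamma\gamma_1=\gamma\gamma_2$ (they are the projections of the kernel pair of $\gamma$), postcomposing $\rho$ gives $\gamma\circ q'\circ g_1=\gamma\circ q'\circ g_2$, as wanted; this yields $q$.

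Finally I would verify the two axioms for $q$. For \iS1 I precompose $\mu^X\diamond\langle k\circ q,\overline{sf}\rangle$ with $g$; moving $g$ through $\mu^X$ by \eqref{naturality of mu} and using $k\circ q\circ g=g\circ(k'\circ q')$ (from $k\gamma=gk'$ and $q\circ g=\gamma\circ q'$) together with $\overline{sf}\circ g=g\circ\overline{s'f'}$ (from $fg=hf'$ and $sh=gs'$), the composite reduces to $g\circ(\mu^{X'}\diamond\langle k'\circ q',\overline{s'f'}\rangle)=g\circ\overline{1_{X'}}=\overline{1_X}\circ g$ by \iS1 for the middle row; cancelling $g$ with Lemma~\ref{aux2} gives \iS1. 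For \iS2 I precompose $(q\diamond\mu^X)\circ(k\times s)$ with the regular epimorphism $\gamma\times h$; using $k\gamma=gk'$, $sh=gs'$, \eqref{naturality of mu} and $q\circ g=\gamma\circ q'$ this becomes $\gamma\circ\big((q'\diamond\mu^{X'})\circ(k'\times s')\big)=\gamma\circ\overline{\pi_{K'}}=\overline{\pi_K}\circ(\gamma\times h)$ by \iS2 for the middle row, and cancelling $\gamma\times h$ with Lemma~\ref{aux2} gives \iS2. The main obstacle is the construction of $q$: the point is that $\gamma\circ q'$ is genuinely constant on the kernel pair of $g$, and that this suffices for descent. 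Both are settled above — the first through the naturality identity $\gamma_i\circ\rho=q'\circ g_i$, which is where the hypothesis that the top row is Schreier (via $\rho$) is really used, and the second because projectivity of the values of $P$ turns $\overline g$ into a split, hence effective, epimorphism in $\ImC$.
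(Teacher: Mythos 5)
Your argument is essentially sound, and in two places it genuinely deviates from the paper in an attractive way, but it contains one real gap: the claim that $\gamma$ is a regular epimorphism because this ``follows from $g$ and $h$ being regular epimorphisms together with the exactness of the two lower rows, as in the regular Short Five Lemma''. The ambient category $\C$ is only regular and unital, not protomodular or homological, and at that level of generality the implication is simply false. In $\Mon$, take the non-strong point $f'\colon (\N,\max)\to (\{0,1\},\max)$ (with $f'(n)=1$ for $n\geq 1$ and the inclusion as section), whose kernel is trivial; let $g$ be the quotient identifying $0\sim 1$, whose codomain is a nontrivial monoid $X$, and let $Y=0$, $h$ the unique map. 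All squares commute, $g$ and $h$ are surjective, yet the induced morphism on kernels $\{0\}\to X$ is not. So the Schreier hypothesis on the middle row must enter the proof of regularity of $\gamma$; this is exactly why the paper does not argue five-lemma-style but instead invokes that $\C$ is $\s$-protomodular, hence $\s$-Mal'tsev, and then Proposition~3.2 of~\cite{BM-Nine-lemma}. The gap is material for you, since you need $\gamma\times h$ to be a regular epimorphism to cancel it via Lemma~\ref{aux2} in your verification of \iS2. Fortunately it is reparable with your own toolkit: taking $t\colon X\dashto X'$ an imaginary splitting of $g$ (projectivity, Remark~\ref{rem imaginary splitting}), postcomposing \iS1 for the middle row with $g$ and precomposing with $t\diam \overline{k}$, and using \eqref{naturality of mu}, $gk'=k\gamma$, $gs'f'=sfg$ and \eqref{x+0=x}, one gets $k\circ \gamma\circ (q'\diam t\diam\overline{k})=\overline{k}=k\circ\overline{1_K}$; since $k$ is a monomorphism, $\gamma\circ(q'\diam t\diam\overline{k})=\overline{1_K}$, so $\gamma$ admits an imaginary splitting and is therefore a regular epimorphism by Remark~\ref{rem imaginary splitting}.

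Apart from this, your proof is correct and differs from the paper's in two worthwhile respects. The paper constructs the retraction explicitly as $q=\gamma\circ q'\circ t$ for a chosen imaginary splitting $t$ of $g$, and then must verify by a computation with $\delta$ and $\varepsilon$ the key identity $(*)\colon \gamma\circ q'\circ t\circ g=\gamma\circ q'$; you instead observe that $\overline{g}$ is a split epimorphism in $\ImC$ whose kernel pair is $(\overline{g_1},\overline{g_2})$ (the embedding $I$ is a right adjoint, so preserves limits), hence the coequaliser of that kernel pair, and descend $\gamma\circ q'$ directly, so that $q\circ g=\gamma\circ q'$ holds by construction rather than by computation --- by uniqueness of retractions the two constructions agree. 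Second, where the paper cites the compatibility $\gamma_i\rho=q'P(g_i)$ from \cite[Proposition~5.7]{ise}, you reprove it from scratch via Lemma~\ref{aux} applied to the middle row, using \iS1 for the top row; I checked this computation ($k'\gamma_i=g_i\kappa$, $s'f'g_i=g_i\sigma\varphi$, naturality of $\mu$) and it is correct. Your verifications of \iS1 and \iS2 then run parallel to the paper's (the paper proves \iS1 directly using $g\circ t=\overline{1_X}$, you precompose with $g$ and cancel by Lemma~\ref{aux2}; for \iS2 both precompose with $\gamma\times h$). With the regularity of $\gamma$ patched as above, the whole argument goes through.
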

\begin{proof} $\C$ is an $\s$-protomodular category (Example~\ref{ex s-proto}), thus it is an $\s$-Mal'tsev category~\cite[Theorem 5.4]{BM-Nine-lemma}. By Proposition 3.2 in~\cite{BM-Nine-lemma}, $\gamma$ is a regular epimorphism.

Since $P(X)$ is a projective object and $g$ is a regular epimorphism, $g$ admits an imaginary splitting $t\colon X\dashto X'$, so that $g\circ t=\overline{1_X}$ (see Remark~\ref{rem imaginary splitting}). We claim that $q=\gamma\circ q'\circ t\colon X\dashto K$ is the needed imaginary retraction for the bottom row. We must prove \iS1:
\begin{align*}
\mu^X\circ \langle k\circ q, \overline{sf}\rangle & = \mu^X\circ \langle k\gamma\circ q'\circ t, \overline{sf}\rangle \\
 & = \mu^X\circ \langle gk'\circ q'\circ t, \overline{sf}\circ g\circ t\rangle \\
 & = \mu^X\circ \langle gk'\circ q'\circ t, g\circ \overline{s'f'}\circ t\rangle \\
 & = \mu^X\circ (g\times g) \circ \langle k'\circ q', \overline{s'f'}\rangle \circ t \\
 & \stackrel{\mathclap{\eqref{naturality of mu}}}{=} g\circ \mu^{X'} \circ \langle k'\circ q', \overline{s'f'}\rangle \circ t \\
 & = g\circ \overline{1_{X'}} \circ t
 = \overline{1_X},	
\end{align*}
where we use \iS1 applied to the second row in the next to last equality.

For \iS2, we precompose the equality we wish to prove with the regular epimorphism $\gamma\times h$
\begin{align*}
q\circ \mu^X\circ (k\times s)(\gamma\times h) & = \gamma\circ q'\circ t\circ \mu^X\circ (k\gamma\times sh) \\
 & = \gamma\circ q'\circ t\circ \mu^X\circ (gk'\times gs') \\
 & = \gamma\circ q'\circ t\circ \mu^X\circ (g\times g)(k'\times s') \\
 &\stackrel{\mathclap{\eqref{naturality of mu}}}{=} \gamma\circ q'\circ t\circ g\circ \mu^{X'}\circ (k'\times s') \\
 & \stackrel{\mathclap{(*)}}{=} \gamma\circ q'\circ \mu^{X'}\circ (k'\times s') = \gamma\circ \overline{\pi_{K'}} 
 = \overline{\pi_K}\circ (\gamma\times h),	
\end{align*}
where we use \iS2 for the second row in the next to last equality. Then \iS2 follows from Lemma~\ref{aux2}.

To finish, we just need to prove the equality $\gamma \circ q'\circ t\circ g\stackrel{(*)}{=}\gamma \circ q'$. Actually, we prove this equality in $\C$ (not in $\K$) and to do so, we use the compatibility of the first two rows with respect to the imaginary retractions \cite[Proposition 5.7]{ise}: $\gamma_i \rho=q'P(g_i)$, for $i\in \{1,2\}$. We have
\begin{align*}
\gamma q' P(t)\delta_XP(g) & \stackrel{\eqref{naturality of delta}}{=} \gamma q'P(t) P^2(g)\delta_{X'}
 = \gamma q'P(tP(g))\delta_{X'} 
 = \gamma q'P(g_1\langle tP(g), \varepsilon_{X'}\rangle) \delta_{X'},	
\end{align*}
where $\langle tP(g), \varepsilon_{X'}\rangle$ is the unique morphism making the following diagram commutative
$$\xymatrix@=30pt{ P(X') \ar@{.>}[dr]|-{\langle tP(g), \varepsilon_{X'}\rangle} \ar@(r,u)[drr]^-{\varepsilon_{X'}} \ar@(d,l)[ddr]_-{tP(g)} \\
 & \Eq(g) \pullback \ar[d]_-{g_1} \ar[r]^-{g_2} & X' \ar@{>>}[d]^-g \\
 & X' \ar@{>>}[r]_-g & X.}
$$
Using the compatibility mentioned earlier,
\begin{align*}
\gamma q' P(t)\delta_XP(g) \stackrel{\phantom{\eqref{counit}}}{=} & \gamma q'P(g_1)P(\langle tP(g), \varepsilon_{X'}\rangle) \delta_{X'} \\
 \stackrel{\phantom{\eqref{counit}}}{=} & \gamma \gamma_1 \rho P(\langle tP(g), \varepsilon_{X'}\rangle) \delta_{X'} \\
 \stackrel{\phantom{\eqref{counit}}}{=} & \gamma \gamma_2 \rho P(\langle tP(g), \varepsilon_{X'}\rangle) \delta_{X'} \\
 \stackrel{\phantom{\eqref{counit}}}{=} & \gamma q'P(g_2)P(\langle tP(g), \varepsilon_{X'}\rangle) \delta_{X'} \\
 \stackrel{\phantom{\eqref{counit}}}{=} & \gamma q'P(\varepsilon_{X'}) \delta_{X'} 
 \stackrel{\eqref{counit}}{=} \gamma q'.\qedhere
\end{align*}
\end{proof}

\begin{corollary}\emph{(See~\cite[Corollary~2.3.6]{SchreierBook})}\label{2.3.6} Suppose that the values of $P$ are projective objects in $\C$. Consider the diagram
$$
\xymatrix@=40pt{K \ar@{ |>->}[r]^-{k'} \ar@{=}[d] & X' \ar@{>>}@<-.5ex>[r]_-{f'} \ar@{>>}[d]_-{g} \halfsplitpullback & Y' \ar@<-.5ex>[l]_-{s'} \ar@{>>}[d]^-{h}\\
 K \ar@{ |>->}[r]^-{k} & X \ar@{>>}@<-.5ex>[r]_-{f} & Y, \ar@<-.5ex>[l]_-{s}}
$$
where the three squares involving, respectively, the split epimorphism, the kernels, and the sections commute. If the top row is an intrinsic Schreier split extension, then so is the bottom row.
\end{corollary}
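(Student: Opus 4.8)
The plan is to deduce this from Proposition~\ref{2.3.5}, applied to the $3\times 3$ diagram obtained by forming the kernel pairs of the vertical regular epimorphisms $g$ and $h$. Since the right-hand square is a pullback, the kernel of $f'$ is carried isomorphically onto the kernel of $f$, so in the notation of Proposition~\ref{2.3.5} the left-hand vertical map is (up to this identification) the identity, i.e.\ $\gamma = 1_K$ and hence $\Eq(\gamma) = K$. The middle row of that diagram is precisely the present top row, which is an intrinsic Schreier split extension by hypothesis, and $g$, $h$ are regular epimorphisms. Thus the one remaining hypothesis to verify is that the top row $K \to \Eq(g) \xrightarrow{\varphi} \Eq(h)$ assembled from the kernel pairs is itself an intrinsic Schreier split extension.

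To establish this, I would show that the induced square on first projections
$$
\xymatrix{\Eq(g) \ar[r]^-{g_1} \ar[d]_-{\varphi} & X' \ar[d]^-{f'} \\ \Eq(h) \ar[r]_-{h_1} & Y'}
$$
is a pullback. Indeed, $\Eq(g) = X'\times_X X'$, and the comparison $\Eq(g) \to \Eq(h)\times_{Y'} X'$ sending $(x_1',x_2')$ to $\bigl((f'x_1',f'x_2'),x_1'\bigr)$ is invertible exactly because the right-hand square of the corollary is a pullback: given $x_1'$ together with a second component $y_2'$ satisfying $hf'(x_1') = h(y_2')$, the pair $(g(x_1'),y_2')$ determines a \emph{unique} $x_2'\in X' = X\times_Y Y'$ with $g(x_2') = g(x_1')$ and $f'(x_2') = y_2'$. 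This exhibits the point $(\varphi,\sigma)$ over $\Eq(h)$ as the pullback of the point $(f',s')$ over $Y'$ along $h_1$; the matching of the sections and of the kernels is then a routine verification. Since the intrinsic Schreier split epimorphisms form a pullback-stable class of points (the class $\s$ of Example~\ref{ex s-proto}), the top row is an intrinsic Schreier split extension.

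With all the hypotheses of Proposition~\ref{2.3.5} in place---the top two rows are intrinsic Schreier split extensions, $g$ and $h$ are regular epimorphisms, and the values of $P$ are projective---that proposition yields that the bottom row $K \to X \to Y$ is an intrinsic Schreier split extension, as required.

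I expect the main obstacle to be the middle step: recognising that the pullback hypothesis on the right-hand square is precisely what turns the kernel pair $\Eq(g)$ into a pullback of the upper row, so that pullback-stability of $\s$ can be invoked. This hypothesis is essential rather than cosmetic---without it one can build a morphism of split extensions with $g$, $h$ regular epimorphisms and identity on kernels whose top row is Schreier but whose bottom row is not---so pinning down its role is the crucial point, while the surrounding bookkeeping needed to feed the diagram into Proposition~\ref{2.3.5} is routine.
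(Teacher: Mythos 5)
Your proposal is correct and takes essentially the same route as the paper: form the kernel pairs of $1_K$, $g$ and $h$ to obtain a $3\times 3$ diagram, observe that the kernel-pair row is an intrinsic Schreier split extension because the point $(\varphi,\sigma)$ over $\Eq(h)$ is a pullback of $(f',s')$ along $h_1$ (the paper simply cites closure of intrinsic Schreier split extensions under pullbacks, \cite[Proposition~6.1]{ise}, whereas you spell out the isomorphism $\Eq(g)\cong \Eq(h)\times_{Y'}X'$ coming from the pullback hypothesis on the right-hand square), and then apply Proposition~\ref{2.3.5}. The only difference is that you make the role of the pullback hypothesis explicit, a detail the paper's proof leaves implicit.
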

\begin{proof}
Take the kernel pairs of the regular epimorphisms $1_K$, $g$ and $h$. This gives a $3 \times 3$ diagram whose top row is an intrinsic Schreier split extension, since these extensions are closed under arbitrary pullbacks (see~\cite[Proposition~6.1]{ise}). Applying Proposition~\ref{2.3.5} to this $3\times 3$ diagram, we conclude that $(f,s)$ is an intrinsic Schreier split extension.
\end{proof}

In order to get the validity, in our context, of one of the classical homological lemmas, namely the $3 \times 3$-Lemma, we need another stability property of the class of intrinsic Schreier split epimorphisms. This property was called \defn{equi-consistency} in \cite{BM-Nine-lemma}:

\begin{definition} \cite[Definition $6.3$]{BM-Nine-lemma}
Let $\s$ be a pullback-stable class of points. Consider any commutative diagram as in Figure~\ref{3x3}, 
\begin{figure}
$\xymatrix@=40pt{
K'' \ar@{ |>->}[r]_-{k''} \ar@{.>}[d]_-{w} &
X'' \ar@{>>}@<-.5ex>[r]_-{f''} \ar@{ |>->}@<-.5ex>[d]_-{u} &
Y'' \ar@<-.5ex>[l]_-{s''} \ar@{ |>->}[d]_-{v}\\
K' \ar@{ |>->}[r]_-{k'} \ar@{.>}@<-.9ex>[d]_-{t_1} \ar@{.>}@<+.9ex>[d]^-{t_2} &
R \ar@{>>}@<-.5ex>[r]_-{f'} \ar@<-1ex>[d]_-{r_1} \ar@<+1ex>[d]^-{r_2} &
S \ar@<-.5ex>[l]_-{s'} \ar@<-1ex>[d]_-{s_1} \ar@<+1ex>[d]^-{s_2} \\
K \ar@{ |>->}[r]_-{k} \ar@{.>}[u] & X \ar@{>>}@<-.5ex>[r]_-{f} \ar[u]|{e_R} &
Y \ar@<-.5ex>[l]_-{s} \ar[u]|{e_S}
}$
\caption{Equi-consistency}\label{3x3}
\end{figure}
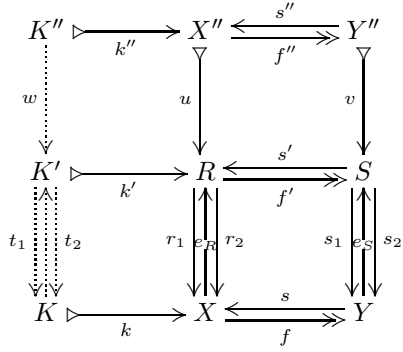
where $\langle r_1,r_2\rangle\colon R \rightarrowtail X\times X$ and $\langle s_1,s_2\rangle\colon S\rightarrowtail Y\times Y$ are equivalence relations, $(f,s)$ and $(f',s')$ are split epimorphisms, $(f'',s'')$ is the induced split epimorphism between the kernels of $r_1$ and $s_1$, and the diagram is completed by taking kernels and the induced dotted morphisms. $\s$ is \defn{equi-consistent} if, whenever the points $(f,s)$, $(r_1,e_R)$, and $(f'',s'')$ belong to $\s$, then $(f',s')$ is in $\s$, too.
\end{definition}

\begin{proposition}\emph{(See~\cite[Proposition~6.4]{BM-Nine-lemma})}\label{Prop 6.4} Suppose that the natural addition $(\mu^X\colon X\times X\dashto X)_{X\in \C}$ is associative. Then the class of intrinsic Schreier split extensions is equi-consistent.
\end{proposition}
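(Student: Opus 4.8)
The plan is to verify equi-consistency directly, by producing an imaginary retraction for the point $(f',s')$ out of the retractions we are given for $(f,s)$, $(r_1,e_R)$, and $(f'',s'')$. The hypothesis tells us that these three points are intrinsic Schreier split extensions, so each comes equipped with its own (unique) imaginary retraction; call them $q\colon X\dashto K$, $\rho\colon R\dashto K'$ (the retraction associated with the relation point $(r_1,e_R)$, whose kernel is $k'$ via $X''\hookrightarrow R$), and $q''\colon X''\dashto K''$. The goal is to exhibit an imaginary morphism $q'\colon R\dashto K'$ satisfying \iS1 and \iS2 relative to $(f',s')$ and the kernel $k'$, and then invoke uniqueness (Remark~\ref{q unique}, or equivalently Lemma~\ref{aux}) to finish.

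First I would set up the candidate retraction. The natural guess, mirroring the construction in Proposition~\ref{2.3.5}, is to build $q'$ from $\rho$ together with the retraction data on the fibres. Concretely, since $R$ sits over $X$ via $r_1$ and carries the relation structure, one expects $q'$ to be assembled as an imaginary morphism $R\dashto K'$ using $\rho$ to land in the kernel $K'$ of $r_1$ and correcting by the fibrewise retraction $q''$ coming from the top row. I would then check \iS1, namely $\mu^R\diam\langle k'\ci q',\overline{s'f'}\rangle=\overline{1_R}$. The verification should proceed exactly as in the proofs of Propositions~\ref{2.3.2, part 1} and~\ref{2.3.5}: rewrite $k'\ci q'$ in terms of $k$, $\rho$, and the comparison morphisms, push the relation projections $r_1$, $r_2$ through using naturality of $\mu$ \eqref{naturality of mu}, and reduce the identity to the three given instances of \iS1. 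The associativity axiom \eqref{assoc} will be used here to regroup the nested imaginary additions, just as it was in Proposition~\ref{2.3.2, part 1}.

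For \iS2 I would argue as in Proposition~\ref{2.3.5}: rather than proving $q'\diam\mu^R\ci(k'\times s')=\overline{\pi_{K'}}$ outright, precompose with a suitable regular epimorphism so that the identity becomes checkable fibrewise, and then invoke Lemma~\ref{aux2} (valid since the values of $P$ are projective) to cancel it. The equi-consistency hypothesis is engineered precisely so that the pieces fit: the point $(f,s)$ in the base handles the ``$X$-direction'', while $(f'',s'')$ between the kernels handles the complementary ``relation-fibre'' direction, and $(r_1,e_R)$ glues them. The compatibility of the retractions with the relation structure---the analogue of the identity $\gamma_i\rho=q'P(g_i)$ used via \cite[Proposition~5.7]{ise} at the end of Proposition~\ref{2.3.5}---is what lets the two directions be combined coherently.

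The main obstacle I anticipate is bookkeeping rather than conceptual: correctly defining $q'$ so that it simultaneously respects the kernel inclusion $k'$ and restricts on the fibre $X''$ to something governed by $q''$, and then tracking the several comparison and projection morphisms through the long chains of $\mu$-identities without losing the hypotheses' hooks. As in Proposition~\ref{2.3.2, part 1}, the delicate point will be the repeated application of \eqref{assoc} and \iS6/\iS7 to collapse the composite in the \iS2 computation; getting the intermediate term of the form $q'\circ\mu^R\circ\langle\cdots\rangle$ to vanish (by \iS3--\iS4, in analogy with the fibre computation in Proposition~\ref{2.3.2, part 1}) is where the argument is most likely to require care. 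Once both axioms are established for the candidate $q'$, uniqueness makes it the genuine imaginary retraction, so $(f',s')$ is an intrinsic Schreier split extension and the class is equi-consistent.
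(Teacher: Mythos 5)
Your high-level strategy --- build an imaginary retraction $q'$ for $(f',s')$ out of the retractions for $(f,s)$, $(r_1,e_R)$ and $(f'',s'')$, then verify \iS1 and \iS2 --- matches the paper's, but your proposal has a genuine gap exactly where the content of the proof lies, and it also contains a setup error. The kernel of the point $(r_1,e_R)$ is $u\colon X''\rightarrowtail R$, not $k'\colon K'\rightarrowtail R$: its retraction (the paper's $\chi$) is an imaginary morphism $R\dashto X''$, whereas $K'$ is the kernel of $f'$. You conflate these two kernels, so your ``natural guess'' of using the retraction of $(r_1,e_R)$ to land in $K'$ does not typecheck. More importantly, you never actually define the candidate $q'$, and defining it is the hard part. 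The paper constructs three auxiliary imaginary morphisms $\alpha$, $\beta$, $\gamma\colon R\dashto R$ (using $\chi$, $q$, $q''$, the compatibility $t_iw\circ q''=q\circ r_iu$ from \cite[Proposition 5.7]{ise}, and the axioms \iS6 and \iS7 --- this is where associativity enters), reads off their $(r_1,r_2)$-components as imaginary ``elements'' of the relation, and then uses the \emph{symmetry and transitivity of the equivalence relation $R$} to chain them into a single $\delta\colon R\dashto R$ with $r_1\circ\delta=k\circ q\circ r_1$ and $r_2\circ\delta=k\circ q\circ r_2$; since $f'\circ \delta=0$, $\delta$ factors through $k'$, which is what yields $q'$. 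Nothing in your sketch supplies this relational argument, and without it there is no way to produce an imaginary morphism into $K'$ at all; the analogy with Proposition~\ref{2.3.5} breaks down because here there is no regular epimorphism to descend along and no pre-existing $q'$ for a compatibility identity to constrain.

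A second, independent problem: your plan for \iS2 invokes Lemma~\ref{aux2}, which requires the values of $P$ to be projective --- a hypothesis this proposition does not assume (in contrast with Propositions~\ref{2.3.5} and~\ref{2.3.10}). The paper needs no such assumption: both \iS1 and \iS2 for $(f',s')$ are checked by postcomposing with $r_1$ and $r_2$ and cancelling, which uses only that $\langle r_1,r_2\rangle$ is a monomorphism, together with \iS1 and \iS2 for $(f,s)$. Finally, your closing appeal to uniqueness of the retraction is superfluous: once \iS1 and \iS2 hold for a candidate, $(f',s')$ is by definition an intrinsic Schreier split extension.
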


\begin{proof} Consider the commutative diagram in Figure~\ref{3x3}. Suppose that $(f,s)$, $(r_1,e_R)$, and $(f'',s'')$ are intrinsic Schreier split epimorphisms, and denote by $q$, $q''$, and $\chi$ the imaginary retractions for $(f,s)$, $(f'',s'')$, and $(r_1,e_R)$, respectively. In particular, we have
\begin{equation}\label{retraction pp}
t_iw\circ q'' = q\circ r_iu, \; i\in\{1,2\}.
\end{equation}

We consider the imaginary morphism $\alpha=\mu^R\circ \langle s'f'u\circ \chi, e_Rk\circ q\circ r_1\rangle\colon R\dashto R$. We have
\begin{align*}
& r_1\circ \mu^R\circ \langle s'f'u\circ \chi, e_Rk\circ q\circ r_1\rangle 
\stackrel{\eqref{naturality of mu}}{=} \mu^X\circ (r_1\times r_1)\circ \langle s'f'u\circ \chi, e_Rk\circ q\circ r_1\rangle \\
&\stackrel{\text{Fig.~\ref{3x3}}}{=} \mu^X\circ \langle 0, k\circ q\circ r_1\rangle 
= \mu^X\circ \langle 0,1_X\rangle \circ k\circ q\circ r_1 
\stackrel{\eqref{0+x=x}}{=} k\circ q\circ r_1	
\end{align*}

and
\begin{align*}
& r_2\circ \mu^R\circ \langle s'f'u\circ \chi, e_Rk\circ q\circ r_1\rangle \\
&\stackrel{\mathclap{\eqref{naturality of mu}}}{=} \quad \mu^X\circ (r_2\times r_2)\circ \langle s'f'u\circ \chi, e_Rk\circ q\circ r_1\rangle \\
&\stackrel{\mathclap{\text{Fig.~\ref{3x3}}}}{=} \quad \mu^X\circ \langle sfr_2u\circ \chi, k\circ q\circ r_1\rangle;	
\end{align*}
thus we obtain the commutativity of the following diagram:
\begin{equation}\label{R1}
\vcenter{\xymatrix@C=60pt{
& X \\
R \ar@{-->}[r]^-{\alpha} \ar@{-->}[ur]^-{k\circ q\circ r_1} \ar@{-->}[dr]_-{\mu^X\circ \langle sfr_2u\circ \chi, k\circ q\circ r_1\rangle\;\;\;\;\;\;} &
R \ar[u]_-{r_1} \ar[d]^-{r_2} \\
& X.
}}
\end{equation}

Next, we consider $\beta=\mu^R\circ \langle e_Rk\circ q\circ r_2\circ \alpha, s'f'u\circ \chi\rangle\colon R\dashto R$. We have
\begin{align*}
& r_1\circ \mu^R\circ \langle e_Rk\circ q\circ r_2\circ \alpha, s'f'u\circ \chi\rangle \\
&\stackrel{\mathclap{\eqref{naturality of mu}}}{=} \quad\mu^X\circ (r_1\times r_1)\circ \langle e_Rk\circ q\circ r_2\circ \alpha, s'f'u\circ \chi\rangle \\
&\stackrel{\mathclap{\text{Fig.~\ref{3x3}}}}{=} \quad \mu^X\circ \langle k\circ q\circ r_2\circ \alpha,0\rangle \\
&= \quad \mu^X\circ \langle 1_X,0\rangle \circ k\circ q\circ r_2\circ \alpha \\
&\stackrel{\mathclap{\eqref{x+0=x}}}{=} \quad k\circ q\circ r_2\circ \alpha	
\end{align*}
and
\begin{align*}
& r_2\circ \mu^R\circ \langle e_Rk\circ q\circ r_2\circ \alpha, s'f'u\circ \chi\rangle \\
&\stackrel{\mathclap{\eqref{naturality of mu}}}{=} \quad \mu^X\circ (r_2\times r_2)\circ \langle e_Rk\circ q\circ r_2\circ \alpha, s'f'u\circ \chi\rangle \\
&\stackrel{\mathclap{\text{\eqref{R1},Fig.~\ref{3x3}}}}{=} \quad\mu^X\circ \langle k\circ q\circ \mu^X\circ \langle sfr_2u\circ \chi, k\circ q\circ r_1\rangle, sfr_2u\circ \chi \rangle \\
&= \quad \mu^X\circ \langle k\circ q\circ \mu^X\circ (s\times k)\circ \langle fr_2u\circ \chi, q\circ r_1\rangle, sfr_2u\circ \chi \rangle \\
&= \quad \mu^X\circ \langle k\circ q\circ \mu^X\circ (s\times k)\circ \langle fr_2u\circ \chi, q\circ r_1\rangle, s\pi_Y\circ \langle fr_2u\circ \chi, q\circ r_1\rangle \rangle \\
&= \quad \mu^X\circ \langle k\circ q\circ \mu^X\circ (s\times k), \overline{s\pi_Y} \rangle \circ \langle fr_2u\circ \chi, q\circ r_1\rangle \\
&\stackrel{\mathclap{\iS6}}{=}\quad \mu^X\circ (s\times k)\circ \langle fr_2u\circ \chi, q\circ r_1\rangle
= \mu^X\circ \langle sfr_2u\circ \chi, k\circ q\circ r_1\rangle;
\end{align*}
this gives the commutativity of the following diagram:
\begin{equation}\label{R2}
\vcenter{\xymatrix@C=50pt{
& & & X \\
R \ar@{-->}[rr]|-{\beta} \ar@{-->}[urrr]^-{\mu^X\circ \langle sfr_2u\circ \chi, k\circ q\circ r_1\rangle\;\;\;\;\;\;\;\;} \ar@{-->}[drrr]_-{k\circ q\circ r_2\circ \alpha} & &
R \ar[ur]_-{r_2} \ar[dr]^-{r_1} \ar[r]|-{i_R} & R \ar[u]_-{r_1} \ar[d]^-{r_2} \\
& & & X.
}}
\end{equation}

Now we consider $\gamma=\mu^R\circ \langle uk''\circ q''\circ\chi, e_Rk\circ q\circ r_2\circ \alpha\rangle\colon R\dashto R$. We have
\begin{align*}
& r_1\circ \mu^R\circ \langle uk''\circ q''\circ\chi, e_Rk\circ q\circ r_2\circ \alpha\rangle \\
&\stackrel{\mathclap{\eqref{naturality of mu}}}{=} \quad \mu^X\circ (r_1\times r_1)\circ \langle uk''\circ q''\circ\chi, e_Rk\circ q\circ r_2\circ \alpha\rangle \\
&\stackrel{\mathclap{\text{Fig.~\ref{3x3}}}}{=} \quad \mu^X\circ \langle 0, k\circ q\circ r_2\circ \alpha \rangle \\
&= \quad \mu^X\circ \langle 0,1_X\rangle \circ k\circ q\circ r_2\circ \alpha \\
&\stackrel{\mathclap{\eqref{0+x=x}}}{=} \quad k\circ q\circ r_2\circ \alpha
\end{align*}
and
\begin{align*}
& r_2\circ \mu^R\circ \langle uk''\circ q''\circ\chi, e_Rk\circ q\circ r_2\circ \alpha\rangle \\
&\stackrel{\mathclap{\eqref{naturality of mu}}}{=} \qquad \mu^X\circ (r_2\times r_2)\circ \langle uk''\circ q''\circ\chi, e_Rk\circ q\circ r_2\circ \alpha\rangle \\
&\stackrel{\mathclap{\text{\eqref{R1},Fig.~\ref{3x3}}}}{=} \qquad \mu^X\circ \langle kt_2w\circ q''\circ \chi, k\circ q\circ \mu^X\circ \langle sfr_2u\circ \chi,k\circ q\circ r_1\rangle \rangle \\
&= \qquad \mu^X\circ (k\times k) \circ \langle t_2w\circ q''\circ \chi, q\circ \mu^X\circ \langle sfr_2u\circ \chi,k\circ q\circ r_1\rangle \rangle \\
&\stackrel{\mathclap{\eqref{retraction pp}}}{=} \qquad \mu^X\circ (k\times k) \circ \langle q\circ r_2u\circ \chi, q\circ \mu^X\circ \langle sfr_2u\circ \chi,k\circ q\circ r_1\rangle \rangle \\
&\stackrel{\mathclap{\eqref{naturality of mu}}}{=} \qquad k\circ \mu^K\circ (\overline{1_K}\times (q\circ \mu^X)) \circ \langle q\circ r_2u\circ \chi, \langle sfr_2u\circ \chi,k\circ q\circ r_1\rangle \rangle \\
&= \qquad k\circ \mu^K\circ (\overline{1_K}\times (q\circ \mu^X)) \circ \langle q\circ \pi_1\circ \langle r_2u\circ \chi, \overline{r_1}\rangle,\\
&\phantom{= \qquad k\circ \mu^K\circ (\overline{1_K}\times (q\circ \mu^X)) \circ \langle} \langle \overline{sf\pi_1} \circ \langle r_2u\circ \chi, \overline{r_1}\rangle , k\circ q\circ\pi_2\circ \langle r_2u\circ \chi, \overline{r_1}\rangle \rangle \rangle \\
&= \qquad k\circ \mu^K\circ (\overline{1_K}\times (q\circ \mu^X)) \circ \langle q\circ \pi_1, \langle \overline{sf\pi_1}, k\circ q\circ \pi_2\rangle \rangle \circ \langle r_2u\circ \chi, \overline{r_1} \rangle \\
&\stackrel{\mathclap{\iS7}}{=} \qquad k\circ q\circ \mu^X\circ \langle r_2u\circ \chi, \overline{r_1} \rangle \\
&= \qquad k\circ q\circ \mu^X \circ \langle r_2u\circ \chi, r_2\circ \overline{e_R r_1} \rangle \\
&= \qquad k\circ q\circ \mu^X \circ (r_2\times r_2) \circ \langle u\circ \chi, \overline{e_R r_1} \rangle \\
&\stackrel{\mathclap{\eqref{naturality of mu}}}{=} \qquad k\circ q\circ r_2\circ \mu^R \circ \langle u\circ \chi, \overline{e_R r_1} \rangle \\
&\stackrel{\mathclap{\iS1}}{=} \qquad k\circ q\circ r_2,
\end{align*}
where the last \iS1 is with respect to the intrinsic Schreier extension $(r_1,e_R)$; then the diagram
\begin{equation}\label{R3}
\vcenter{\xymatrix@C=60pt{
& X \\
R \ar@{-->}[r]^-{\gamma} \ar@{-->}[ur]^-{k\circ q\circ r_2\circ \alpha} \ar@{-->}[dr]_-{k\circ q\circ r_2} &
R \ar[u]_-{r_1} \ar[d]^-{r_2} \\
& X
}}
\end{equation}
commutes.

Next we use the fact that $R$ is transitive together with \eqref{R1}, \eqref{R2} and \eqref{R3} to deduce the existence of an imaginary morphism $\delta\colon R\dashto R$ such that the following diagram commutes
$$
\xymatrix@C=60pt{
& X \\
R \ar@{-->}[r]^-{\delta} \ar@{-->}[ur]^-{k\circ q\circ r_1} \ar@{-->}[dr]_-{k\circ q\circ r_2} &
R \ar[u]_-{r_1} \ar[d]^-{r_2} \\
& X.
}
$$
We are now able to define the imaginary retraction $q'$ for $(f',s')$:
$$
\xymatrix@=30pt{
R \ar@{-->}[dr]|-{q'} \ar@{-->}@(r,u)[drr]^-{\delta} \ar@{-->}@(d,l)[ddr]_-{!_R} \\
 & K' \pullback \ar[d]_-{!_{K'}} \ar[r]^-{k'} & R \ar@{>>}[d]^-{f'} \\
 & 0 \ar@{>>}[r] & S.
}
$$
Note that $s_if'\circ \delta =fr_i\circ \delta=fk\circ q\circ r_i=0$, $i\in\{1,2\}$, from which we get $f'\circ \delta=0$.

To finish we must prove \iS1 and \iS2 for $(f',s')$. To obtain the equality for \iS1
$$
\xymatrix@C=40pt@R=30pt{R \ar@{-->}[r]^-{\langle k'\ci q', \overline{s'f'}\rangle} \ar@{-->}[dr]_-{\overline{1_R}} & R\times R \ar@{-->}[d]^-{\mu^R} \\ & R,}
$$
we prove that $r_i\circ \mu^R \circ \langle k'\circ q', \overline{s'f'}\rangle = r_i\circ \overline{1_R}=\overline{r_i}$, $i\in\{1,2\}$, by using \iS1 for~$(f,s)$.

To obtain the equality for \iS2
$$
\xymatrix@=30pt{ K'\times S \ar[r]^-{k'\times s'} \ar@{-->}[drr]_-{\overline{\pi_{K'}}} & R\times R \ar@{-->}[r]^-{\mu^R} & R \ar@{-->}[d]^-{q'} \\ & & K',}
$$
we prove that $r_i\circ k'\circ q'\circ \mu^R\circ (k'\times s')=r_i\circ k'\circ \overline{\pi_{K'}}$, $i\in\{1,2\}$, which uses \iS2 for $(f,s)$.
\end{proof}

We say that a morphism $f \colon X \to Y$ is an \defn{intrinsic Schreier special morphism} if the split epimorphism $(f_1, e_f)$ is intrinsic Schreier, where $\xymatrix{ \Eq(f) \ar@<-1ex>[r]_-{f_1} \ar@<+1ex>[r]^-{f_2} & X \ar[l]|-{e_f} }$ is the kernel pair of $f$. This is equivalent to asking that the split epimorphism $(f_2, e_f)$ is intrinsic Schreier. If this intrinsic Schreier special morphism is a regular epimorphism, then it is automatically the cokernel of its kernel, thus it gives rise to an extension (Proposition 5.6 in~\cite{BM-Nine-lemma}). Thanks to the stability properties we proved in this section, we can apply Proposition~6.2 and Theorem~6.7 in \cite{BM-Nine-lemma} and get the following version of the $3 \times 3$-Lemma:

\begin{theorem}
Consider the commutative diagram
$$
\vcenter{\xymatrix@=40pt{
K'' \ar[r]^-{k''} \ar@{ |>->}[d]_-{w'} &
X'' \ar[r]^-{f''} \ar@{ |>->}[d]_-{u'} &
Y'' \ar@{ |>->}[d]^-{v'}\\
K' \ar@{ |>->}[r]_-{k'} \ar@{>>}[d]_-{w} &
X' \ar@{>>}[r]_-{f'} \ar@{>>}[d]_-{u} &
Y' \ar@{>>}[d]^-{v} \\
K \ar[r]_-{k} \ar@{.>}[u] & X \ar[r]_-{f} &
Y,
}}
$$
where the three columns and the middle row are intrinsic Schreier special extensions. The upper row is an intrinsic Schreier special extension if and only if the lower one is.\noproof
\end{theorem}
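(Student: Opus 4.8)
The plan is to derive this $3\times 3$-Lemma from the abstract nine-lemma for $\s$-protomodular categories established in~\cite{BM-Nine-lemma}, taking $\s$ to be the class of intrinsic Schreier split epimorphisms, and to do so by checking that every hypothesis of that abstract result is met in our setting. The first step is purely definitional: by the notion of intrinsic Schreier special morphism recalled just above, a morphism $f$ is intrinsic Schreier special exactly when the point $(f_1,e_f)$ obtained from its kernel pair lies in $\s$. Hence the assumption that the three columns and the middle row of the diagram are intrinsic Schreier special extensions translates, via passage to kernel pairs, into the statement that the corresponding points belong to $\s$; this is precisely the input data required by the abstract $3\times 3$-Lemma, and the equivalence to be proved becomes the equivalence between the top row's point lying in $\s$ and the bottom row's point lying in $\s$.

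The second step is to verify the three structural properties of $\s$ that the abstract theorem demands. Pullback-stability of $\s$ is built into our standing hypotheses, being exactly what makes $\C$ an $\s$-protomodular category (Example~\ref{ex s-proto}, proved in~\cite{ise}). Closure of $\s$ under composition is Proposition~\ref{2.3.2, part 1}, and equi-consistency of $\s$ is Proposition~\ref{Prop 6.4}; both of these rest on the associativity axiom, which we therefore keep in force throughout. Finally, being $\s$-protomodular, $\C$ is in particular an $\s$-Mal'tsev category---the ambient condition under which the nine-lemma of~\cite{BM-Nine-lemma} is formulated---exactly as already exploited in the proof of Proposition~\ref{2.3.5}.

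With these ingredients assembled, the statement follows by invoking Proposition~6.2 and Theorem~6.7 of~\cite{BM-Nine-lemma}. The former ensures that a regular-epimorphic special morphism is the cokernel of its kernel, so that the rows and columns of the diagram are genuine extensions, while the latter is the $3\times 3$-Lemma itself, delivering the desired equivalence between the top and bottom rows under precisely the hypotheses we have checked. The main obstacle I anticipate is not computational but one of translation: one must carefully match our notion of intrinsic Schreier special extension with the abstract notion of $\s$-special extension used in~\cite{BM-Nine-lemma}, confirm that each column and row of the given square corresponds through kernel pairs to a point in $\s$, and recognise the equi-consistency configuration of Figure~\ref{3x3} as exactly the one arising here. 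Once this dictionary is fixed, no further argument is needed.
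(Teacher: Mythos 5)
Your proposal coincides with the paper's own (deliberately omitted) argument: the theorem is obtained by invoking Proposition~6.2 and Theorem~6.7 of~\cite{BM-Nine-lemma} for the class $\s$ of intrinsic Schreier split epimorphisms, the hypotheses being exactly the ones you verify---pullback-stability via $\s$-protomodularity (Example~\ref{ex s-proto}), closure under composition (Proposition~\ref{2.3.2, part 1}), and equi-consistency (Proposition~\ref{Prop 6.4}), the latter two under the associativity axiom, which you correctly keep in force. The translation step you flag, identifying intrinsic Schreier special morphisms with $\s$-special morphisms via kernel pairs, is likewise precisely how the paper sets up the statement, so nothing is missing.
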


We conclude this section by proving the stronger version of the Split Short Five Lemma we mentioned in Section \ref{Intrinsic Schreier split epimorphisms}.

\begin{proposition}\emph{(See~\cite[Proposition~2.3.10]{SchreierBook})}\label{2.3.10} Suppose that the values of $P$ are projective objects in $\C$. Consider the diagram
$$
\xymatrix@=40pt{K' \ar@{ |>->}@<-.5ex>[r]_-{k'} \ar[d]_-{\gamma} & X' \ar@{>>}@<-.5ex>[r]_-{f'} \ar[d]_-{g} \ar@{-->}@<-.5ex>[l]_-{q'}
 & Y' \ar@<-.5ex>[l]_-{s'} \ar[d]^-{h}\\
 K \ar@{ |>->}@<-.5ex>[r]_-{k} & X \ar@{>>}@<-.5ex>[r]_-{f} \ar@{-->}@<-.5ex>[l]_-{q} & Y, \ar@<-.5ex>[l]_-{s}}
$$
where both rows are intrinsic Schreier split extensions and the three squares involving, respectively, the split epimorphism, the kernels, and the sections commute. Then
\begin{enumerate}
\item $g$ is a regular epimorphism if and only if $\gamma$ and $h$ are regular epimorphisms;
\item $g$ is a monomorphism if and only if $\gamma$ and $h$ are monomorphisms.
\end{enumerate}
\end{proposition}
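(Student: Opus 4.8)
The plan is to prove the four implications separately, reserving the genuine \emph{short five lemma} content for the two ``reconstruction'' directions. Throughout I would use that the three squares commute ($gk'=k\gamma$, $fg=hf'$, $gs'=sh$), that both points are strong (Example~\ref{ex s-proto}), so that $(k,s)$ and $(k',s')$ are jointly extremal-epimorphic, and the compatibility of the imaginary retractions along a morphism of intrinsic Schreier split extensions from \cite[Proposition~5.7]{ise}, which for the present morphism $(\gamma,g,h)$ reads $\gamma q' = qP(g)$.

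The three \emph{easy} implications are direct. If $g$ is a regular epimorphism, then so is $fg=hf'$, whence $h$ is a regular epimorphism; moreover $P(g)$ is then a split epimorphism (Remark~\ref{rem imaginary splitting}, using that the values of $P$ are projective), and $q$ is a regular epimorphism since $qP(k)=\varepsilon_K$ by \iS{3}, so the composite $\gamma q' = qP(g)$ is a regular epimorphism and therefore $\gamma$ is one as well. If $g$ is a monomorphism, then $k\gamma = gk'$ is a monomorphism and hence so is $\gamma$; and from $sh=gs'$ together with $f's'=1_{Y'}$ one reads off that $h$ is a monomorphism by the usual cancellation ($ha=hb \Rightarrow sha=shb \Rightarrow gs'a=gs'b \Rightarrow s'a=s'b \Rightarrow a=b$).

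For the converse of (1), assuming $\gamma$ and $h$ to be regular epimorphisms, I would take the (regular epimorphism, monomorphism) factorisation $g=me$ and show that both $k$ and $s$ factor through the monomorphism $m$. Indeed $k\gamma=gk'=m(ek')$ with $\gamma$ a strong epimorphism yields a diagonal lift of $k$ through $m$, and symmetrically $sh=gs'=m(es')$ lifts $s$ through $m$. Since $(k,s)$ is jointly extremal-epimorphic and both legs factor through the monomorphism $m$, the map $m$ is an isomorphism, so $g=me$ is a regular epimorphism.

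The main obstacle is the converse of (2): deducing that $g$ is a monomorphism from $\gamma$ and $h$ being monomorphisms. Here I would pass to the kernel pair $(p_1,p_2)\colon \Eq(g)\rightrightarrows X'$ and prove $p_1=p_2$. From $gp_1=gp_2$ I get $hf'p_1=fgp_1=fgp_2=hf'p_2$, so $f'p_1=f'p_2$ because $h$ is a monomorphism; and the compatibility $\gamma q'=qP(g)$ gives $\gamma q'P(p_1)=qP(gp_1)=qP(gp_2)=\gamma q'P(p_2)$, so $q'\circ p_1=q'\circ p_2$ because $\gamma$ is a monomorphism. I would then feed these two equalities into \iS{1} for the top row, read in the Kleisli category $\K$ as $\overline{1_{X'}}=\mu^{X'}\diam\langle k'\circ q',\overline{s'f'}\rangle$: composing on the right with $\overline{p_1}$ and with $\overline{p_2}$ and using that pairing and $\mu^{X'}$ are respected by precomposition, both expressions collapse to $\mu^{X'}\diam\langle k'\circ(q'\circ p_1),\overline{s'f'p_1}\rangle$, which by the two established equalities is symmetric in the indices $1$ and $2$. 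Hence $\overline{p_1}=\overline{p_2}$ in $\K$, and since the embedding $I\colon\C\to\K$ is faithful, $p_1=p_2$, so $g$ is a monomorphism. The delicate point is exactly this last step: one must treat \iS{1} as an identity between imaginary endomorphisms of $X'$ and verify that precomposition with the real morphisms $p_i$ interacts correctly with the Kleisli composition $\diam$ and with the pairing, so that the reconstruction ``$x'=k'q'(x')+s'f'(x')$'' can be cancelled against the already-proved agreement of the $K'$- and $Y'$-components.
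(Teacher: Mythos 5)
Your proposal is correct, and it agrees with the paper's proof in three of the four implications up to cosmetic variants: the paper deduces that $q$ is a regular epimorphism from \iS2 where you use \iS3 ($qP(k)=\varepsilon_K$, whose left factor $q$ is then an extremal, hence regular, epimorphism), and in the converse of (2) it tests against an arbitrary parallel pair $a,b\colon U\to X'$ with $ga=gb$ where you use the kernel pair of $g$; the Kleisli manipulations you flag as delicate (precomposition with a real morphism distributing over $\diam$ and over pairings, which holds by naturality of $\delta$ and of pairing under $P(-)$) are exactly the ones the paper itself performs, so that step is sound. The genuine divergence is in the converse of (1). After factoring $g=me$, the paper upgrades $(k,s)$ to the jointly extremal-epimorphic pair $(kq,sh\varepsilon_{Y'})$ and exhibits \emph{explicit} factorisations of these through $m$ at the level of $P$, namely $kq=m\varepsilon_MP(ek')\sigma P(q)\delta_X$ and $sh\varepsilon_{Y'}=m\varepsilon_MP(es')$, where $\sigma$ is a splitting of $P(\gamma)$ obtained from Remark~\ref{rem imaginary splitting}---so it re-uses the projectivity hypothesis there. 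You instead lift $k$ and $s$ themselves through the monomorphism $m$ by the diagonal fill-in property, from the commutative squares $k\gamma=m(ek')$ and $sh=m(es')$ with $\gamma$ and $h$ regular (hence strong) epimorphisms, and conclude directly from $(k,s)$ being jointly extremal-epimorphic. Your route is more elementary: it involves no imaginary-level computation and, for this direction, no projectivity of the values of $P$ (which you correctly still invoke in the forward direction of (1), to make $P(g)$ a split epimorphism so that $\gamma q'=qP(g)$ is regular epic); what the paper's computation buys instead is an argument that stays uniformly inside the imaginary calculus, mirroring the monoid proof of \cite[Proposition~2.3.10]{SchreierBook}.
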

\begin{proof}
1.\ If $g$ is a regular epimorphism, then so is $h$, from the commutativity of the diagram. Moreover, the compatibility for the imaginary retractions gives $\gamma q'=qP(g)$. Then $\gamma$ is a regular epimorphism since so are $q$ and $P(g)$ (by \iS2 and Remark~\ref{rem imaginary splitting}).

Conversely, suppose that $\gamma$ and $h$ are regular epimorphisms. We take the (regular epimorphisms, monomorphism) factorisation $g=me$, and prove that $m$ is an isomorphism. Since the bottom row is an intrinsic Schreier split extension, we know that $(k,s)$ is a jointly extremal-epimorphic pair (see Subsection~\ref{ISSE}). Since $q$ and $h\varepsilon_{Y'}$ are regular epimorphisms, then $(kq,sh\varepsilon_{Y'})$ is also a jointly extremal-epimorphic pair. In $\C$, it is easy to check the commutativity of
$$
\xymatrix@=30pt{& M \ar@{ >->}[d]_-m \\
 P(X) \ar[r]_-{kq} \ar[ur]^-{\varepsilon_MP(ek')\sigma P(q)\delta_X} & X & P(Y'), \ar[l]^-{sh\varepsilon_{Y'}} \ar[ul]_-{\varepsilon_M P(es')} }
$$
where $\sigma$ is a splitting of the split epimorphism $P(\gamma)$ (Remark~\ref{rem imaginary splitting}). Thus, $m$ is an isomorphism and $g$ is a regular epimorphism.

\smallskip
2.\ If $g$ is a monomorphism, then so are $\gamma$ and $h$, from the commutativity of the diagram. For the converse, suppose that $a,b\colon U\to X'$ are morphisms such that $ga=gb$. Then, $fga=fgb$, from which we get $f'a=f'b$ (since $fg=hf'$ and $h$ is a monomorphism). On the other hand, we deduce $kqP(g)P(a)=kqP(g)P(b)$ and $k\gamma q'P(a)=k\gamma q'P(b)$, from the compatibility for imaginary retractions (\cite[Proposition 5.7]{ise}). This gives $q'P(a)=q'P(b)$, since $k\gamma$ is a monomorphism. Thus $q'\circ a=q'\circ b$, as imaginary morphisms. Then
\begin{align*}
a &= \overline{1_{X'}}\circ a 
 \stackrel{\iS1}{=} \mu^{X'} \circ \langle k'\circ q',\overline{s'f'}\rangle \circ a 
 = \mu^{X'} \circ \langle k'\circ q'\circ a,\overline{s'f'a}\rangle \\
 &= \mu^{X'} \circ \langle k'\circ q'\circ b,\overline{s'f'b}\rangle 
 = \mu^{X'} \circ \langle k'\circ q',\overline{s'f'}\rangle \circ b 
 \stackrel{\iS1}{=} \overline{1_{X'}} \circ b 
 = b.\qedhere
\end{align*}
\end{proof}

\section{Intrinsic Schreier special objects}\label{Intrinsic Schreier special objects}
Let $\C$ be a pointed and finitely complete category and $\s$ a class of points in~$\C$ which is stable under pullbacks along arbitrary morphisms. Recall from~\cite{S-proto} that an object $Y$ is called an \defn{$\s$-special object} when the split epimorphism
\begin{equation} \label{point S-special object}
\xymatrix@!0@=6em{ Y \ar@{ |>->}[r]_-{\langle 1_Y,0 \rangle} & Y\times Y \ar@<-.5ex>@{>>}[r]_-{\pi_2} & Y \ar@<-.5ex>[l]_-{\langle 1_Y,1_Y \rangle}}
\end{equation}
(or, equivalently, the split epimorphism $(\pi_1,\langle 1_Y,1_Y\rangle)$) belongs to the class~$\s$. If $\C$ is an $\s$-protomodular category, then the full subcategory formed by the $\s$-special objects is protomodular (\cite{S-proto}, Proposition $6.2$), and it is called the \defn{protomodular core} of $\C$ with respect to the class $\s$. When $\C$ is the category of monoids, and $\s$ is either the class of Schreier split epimorphisms or the one of left homogeneous split epimorphisms, the protomodular core is the category of groups. More generally, when $\V$ is a J\'onsson--Tarski variety, an algebra in $\V$ is a Schreier special object if and only if it has a right loop structure~\cite[Proposition 7.5]{ise} (see Subsection~\ref{Imaginary (one-sided) loops} for the right loop axioms). Similarly, an algebra in $\V$ is special with respect to the class of left homogeneous split epimorphisms (see Remark~\ref{homogeneous}) if and only if it has a left loop structure (see Subsection~\ref{Imaginary (one-sided) loops} for the left loop axioms).

Now we want to study what happens in the intrinsic Schreier setting. So, let~$\C$ be a regular unital category with binary coproducts, comonadic covers and a natural imaginary splitting $t$. An object $Y$ in $\C$ is an \defn{intrinsic Schreier special object} when the split epimorphism \eqref{point S-special object} is an intrinsic Schreier split epimorphism. This means that there exists an imaginary morphism $q\colon Y\times Y \dashto Y$ such that:
\begin{itemize}
\item[\iSs1] the diagram
$$
\xymatrix@C=110pt@R=30pt{Y\times Y \ar@{-->}[r]^-{\langle \langle 1_Y,0\rangle \ci q, \overline{\langle 1_Y,1_Y\rangle \pi_2}\rangle} \ar@{-->}[dr]_-{\overline{1_{Y\times Y}}} & Y\times Y \times Y\times Y \ar@{-->}[d]^-{\mu^{Y\times Y}} \\ & Y\times Y}
$$
commutes;
\item[\iSs2] the diagram
$$
\xymatrix@C=80pt@R=30pt{ Y\times Y \ar[r]^-{\langle 1_Y,0\rangle\times\langle 1_Y,1_Y\rangle} \ar@{-->}[drr]_-{\overline{\pi_1}} & Y\times Y\times Y\times Y \ar@{-->}[r]^-{\mu^{Y\times Y}} & Y\times Y \ar@{-->}[d]^-q \\ & & Y}
$$
 commutes.
\end{itemize}

In this context, we also have:
\begin{itemize}
\item[\iSs3] $\xymatrix@=30pt{Y \ar[r]^-{\langle 1_Y,0\rangle} \ar@{-->}@/_1pc/[rr]_-{q\ci \langle 1_Y,0\rangle=\overline{1_Y}} & Y\times Y \ar@{-->}[r]^-{q} & Y,}$ i.e., $qP(\langle 1_Y,0\rangle)=\varepsilon_Y$;
\item[\iSs4] $\xymatrix@=30pt{Y \ar[r]^-{\langle 1_Y,1_Y\rangle} \ar@{-->}@/_1pc/[rr]_-{q\ci \langle 1_Y,1_Y\rangle=\overline{0}} & Y\times Y \ar@{-->}[r]^-{q} & Y,}$ i.e., $qP(\langle 1_Y,1_Y\rangle) = 0$.
\end{itemize}
So, if $Y$ is an intrinsic Schreier special object, then the identities \iSs3 and \iSs4 make $q\colon Y\times Y\dashto Y$ an \defn{imaginary subtraction}. Indeed, the identities \iSs3 and \iSs4 correspond to the varietal axioms for a subtraction, i.e.,
\[ q(x, 0) = x, \qquad q(x, x) = 0. \]

\subsection{Imaginary (one-sided) loops}\label{Imaginary (one-sided) loops}
Consider an intrinsic Schreier special object~$Y$. We now show that the imaginary addition given in~\eqref{imaginary addition} and the imaginary subtraction $q\colon Y\times Y\dashto Y$ satisfy the axioms of a (one-sided) loop (like those of a right loop or a left loop). We say then that $Y$ has the structure of an \defn{imaginary one-sided loop}.

We must prove the right loop or left loop axioms
\[
\begin{cases}
(x-y)+y= x \\
 (x+y)-y=x
\end{cases}
\quad\text{or}\;\qquad
\begin{cases}
x+(-x+y)=y \\
 -x+(x+y)=y
\end{cases}
\]
in the imaginary context; we consider the left-hand side axioms. Table~\ref{Fig:RightLoops} gives the right loop axioms and their corresponding ``imaginary'' commutative diagrams.
\begin{table}
\begin{tabular}{c@{\qquad}c}
\toprule
right loop axiom & ``imaginary'' commutative diagram\\
\hline
$(x-y)+y=x$ & \iL1 $\xymatrix@C=50pt@R=30pt{Y\times Y \ar@{-->}[r]^-{\langle q,\overline{\pi_2}\rangle} \ar@{-->}[dr]_-{\overline{\pi_1}} & Y\times Y \ar@{-->}[d]^-{\mu^Y}\\
 & Y}$\\
\hline
$(x+y)-y=x$ & \iL2 $\xymatrix@C=50pt@R=30pt{Y\times Y \ar@{-->}[r]^-{\langle \mu^Y,\overline{\pi_2}\rangle} \ar@{-->}[dr]_-{\overline{\pi_1}} & Y\times Y \ar@{-->}[d]^-{q}\\
 & Y}$ \\
\bottomrule
\end{tabular}
\medskip
\caption{The right loop axioms and their corresponding diagrams}\label{Fig:RightLoops}
\end{table}
The only difference in the diagrams is that $\mu^Y$ and $q$ are swapped, just as ``$+$'' and ``$-$'' are swapped in the right loop axioms.

The commutativity of \iL1 follows from composing \iSs1 with $\pi_1$. From \eqref{naturality of mu} we know that $\pi_1\ci \mu^{Y\times Y}=\mu^Y\ci (\pi_1\times \pi_1)$. Then, we just have to prove that
\[
(\pi_1\times \pi_1)\ci \langle \langle 1_Y,0\rangle\ci q, \overline{\langle 1_Y,1_Y\rangle \pi_2}\rangle=\langle q,\overline{\pi_2}\rangle.
\]
In fact, $(\pi_1\times \pi_1)\ci \langle \langle 1_Y,0\rangle\ci q, \overline{\langle 1_Y,1_Y\rangle \pi_2}\rangle$ corresponds to the real morphism
$$
(\pi_1\times \pi_1)\langle \langle 1_Y,0\rangle q, \langle 1_Y,1_Y\rangle \pi_2\varepsilon_{Y\times Y}\rangle = \langle q, \pi_2\varepsilon_{Y\times Y}\rangle = \langle q,\overline{\pi_2}\rangle.
$$

The commutativity of \iL2 follows from \iSs2. In this case we must show that $\mu^{Y\times Y}\ci(\langle 1_Y,0\rangle\times \langle 1_Y,1_Y\rangle )=\langle \mu^Y,\overline{\pi_2}\rangle$. The imaginary morphism
\[
\mu^{Y\times Y}\ci(\langle 1_Y,0\rangle\times \langle 1_Y,1_Y\rangle )
\]
corresponds to the real morphism
\begin{align*}
 &\bi{1_{Y\times Y}}{1_{Y\times Y}}t_{Y\times Y,Y\times Y} P(\langle 1_Y,0\rangle \times \langle 1_Y,1_Y\rangle)\\
 &\stackrel{\mathclap{\eqref{naturality of t}}}{=}\quad
 \bi{\langle 1_Y,0\rangle}{\langle 1_Y,1_Y\rangle} t_{Y,Y}
 = \langle \bi{1_Y}{1_Y}, \bi{0}{1_Y}\rangle t_{Y,Y} \\
 &\stackrel{\mathclap{\eqref{imaginary addition},\eqref{pp2 for t}}}{=} \quad\langle \mu^Y,\pi_2\varepsilon_{Y\times Y}\rangle
 =\langle \mu^Y,\overline{\pi_2}\rangle.
\end{align*}

The converse is also true. Indeed, suppose the object $Y$ has the structure of an imaginary one-sided loop, in the sense that it is equipped with an imaginary morphism $q \colon Y \times Y \dashto Y$ which, together with the imaginary addition $\mu^Y$, satisfies \iL1 and \iL2. Then $q$ is the imaginary Schreier retraction for the split epimorphism \eqref{point S-special object}. To show this, we need to show that \iSs1 and \iSs2 hold. \iSs2 follows immediately from \iL2, because, as we already observed, $\mu^{Y\times Y}\ci(\langle 1_Y,0\rangle\times \langle 1_Y,1_Y\rangle )=\langle \mu^Y,\overline{\pi_2}\rangle$. In order to prove \iSs1, we use the previous equality $(\pi_1\times \pi_1)\ci \langle \langle 1_Y,0\rangle\ci q, \overline{\langle 1_Y,1_Y\rangle \pi_2}\rangle=\langle q,\overline{\pi_2}\rangle$ to get
\begin{align*}
\pi_1 \ci \mu^{Y \times Y} \ci \langle \langle 1_Y, 0 \rangle \ci q, \overline{\langle 1_Y, 1_Y \rangle \pi_2} \rangle
 & = \mu^Y \ci \langle q, \overline{\pi_2} \rangle \stackrel{{\iL1}}{=} \overline{\pi_1} = \pi_1\ci \overline{1_{Y\times Y}};
\end{align*}
also
\begin{align*}
& \pi_2 \ci \mu^{Y \times Y} \ci \langle \langle 1_Y, 0 \rangle\ci q, \overline{\langle 1_Y, 1_Y \rangle \pi_2} \rangle \\
 &\stackrel{\mathclap{\eqref{naturality of mu}}}{=} \;
 \mu^Y \ci (\pi_2 \times \pi_2) \ci \langle \langle 1_Y, 0 \rangle \ci q, \overline{\langle 1_Y, 1_Y \rangle \pi_2} \rangle\\
 &= \;\mu^Y \ci \langle \pi_2 \langle 1_Y, 0 \rangle\ci q, \overline{\pi_2 \langle 1_Y, 1_Y \rangle \pi_2} \rangle 
 = \mu^Y \ci \langle 0, \overline{\pi_2} \rangle \\
& = \; \mu^Y \ci \langle 0, 1_Y \rangle \ci \overline{\pi_2} 
 \;\stackrel{\mathclap{\eqref{0+x=x}}}{=} \;\overline{\pi_2} 
 \;= \; \pi_2\ci \overline{1_{Y\times Y}}.
\end{align*}
Combining these two equalities we get \iSs1. Hence

\begin{theorem}\label{one-sided loop}
In a regular unital category with binary coproducts, comonadic covers and a natural imaginary splitting, an object is an intrinsic Schreier special object if and only if its canonical imaginary magma structure is a one-sided loop structure.\noproof
\end{theorem}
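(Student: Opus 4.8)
The plan is to notice that both sides of the equivalence are axioms imposed on a single imaginary morphism $q\colon Y\times Y\dashto Y$ paired with the canonical imaginary addition $\mu^Y$: being an intrinsic Schreier special object means $q$ satisfies \iSs1 and \iSs2, while carrying a one-sided loop structure means $q$ and $\mu^Y$ satisfy the (right-)loop axioms \iL1 and \iL2. So I would prove the theorem by matching these two pairs of axioms against each other, working one projection at a time, since products in $\ImC$ are computed as in $\C$ and hence an imaginary morphism into a product is determined by its two projections.

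First I would isolate the two computational identities that carry all the content. The identity
\[
\mu^{Y\times Y}\ci(\langle 1_Y,0\rangle\times \langle 1_Y,1_Y\rangle)=\langle \mu^Y,\overline{\pi_2}\rangle,
\]
obtained from the naturality of $t$ \eqref{naturality of t}, the definition \eqref{imaginary addition} of $\mu^Y$, and \eqref{pp2 for t}, shows that \iSs2 and \iL2 are literally the same equation, which already settles one slot of the equivalence in both directions for free. The second identity,
\[
(\pi_1\times \pi_1)\ci\langle \langle 1_Y,0\rangle\ci q,\overline{\langle 1_Y,1_Y\rangle\pi_2}\rangle=\langle q,\overline{\pi_2}\rangle,
\]
is a direct computation in $\C$ and pins down the $\pi_1$-component of the imaginary morphism appearing in \iSs1.

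For the forward implication I would post-compose \iSs1 with $\pi_1$, use the naturality of $\mu$ \eqref{naturality of mu} to replace $\pi_1\ci\mu^{Y\times Y}$ by $\mu^Y\ci(\pi_1\times\pi_1)$, and then apply the second identity; what remains is exactly \iL1. For the converse, \iSs2 is immediate from \iL2 via the first identity, and the real work is reassembling \iSs1 from \iL1 and \iL2. Here I would verify \iSs1 projection-wise: the $\pi_1$-slot reduces to \iL1 by the computation just described, while the $\pi_2$-slot is handled by the naturality of $\mu$ \eqref{naturality of mu} followed by the unit law \eqref{0+x=x}, both sides collapsing to $\overline{\pi_2}$.

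The main obstacle is precisely this reconstruction of \iSs1, which has a product as codomain and therefore does not correspond to a single loop axiom. It must be split into its two projections, with \iL1 supplying the genuine content in the $\pi_1$-slot and a bookkeeping identity filling the $\pi_2$-slot; the legitimacy of arguing componentwise rests on the fact, recorded earlier, that $I\colon\C\to\ImC$ preserves finite limits, so that products in $\ImC$ agree with those in $\C$. Everything else is routine manipulation of the two displayed identities together with the unit laws \eqref{x+0=x} and \eqref{0+x=x} for the imaginary addition.
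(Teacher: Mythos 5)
Your proposal is correct and follows essentially the same route as the paper's own argument: the same two computational identities (that $\mu^{Y\times Y}\ci(\langle 1_Y,0\rangle\times\langle 1_Y,1_Y\rangle)=\langle\mu^Y,\overline{\pi_2}\rangle$, making \iSs2 and \iL2 coincide, and the $(\pi_1\times\pi_1)$-identity pinning down the first component in \iSs1), the same use of naturality of $\mu$ with the unit law \eqref{0+x=x} for the $\pi_2$-slot, and the same projection-wise reassembly of \iSs1 in the converse direction. The only cosmetic difference is that you make explicit the justification for arguing componentwise (products in $\ImC$ computed as in $\C$), which the paper leaves implicit when it says the two projected equalities combine to give \iSs1.
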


\section{A non-varietal example}\label{A non-varietal example}
In this section we give an example of a non-varietal category for which there exists a natural imaginary splitting, and we analyse what are the intrinsic Schreier split epimorphisms and the intrinsic Schreier special objects in that context.

Take $\C=\PSetop$, which is a semi-abelian category~\cite{SAC,Bourntopos, Borceux-Bourn}, so it is a regular unital category with binary coproducts. We consider the power-set monad $(P,\delta,\varepsilon)$ in $\PSet$, where:
\begin{align*}
P(X,x_0)&=(P(X)=\{A\subseteq X\mid x_0\in A\}, \{x_0\}),\\
 \varepsilon_{(X,x_0)}(x)&=\{x,x_0\}, \\
 \delta_{(X,x_0)}( \{A_i\}_{i\in I} ) &= \bigcup_{i\in I} A_i, \;\text{where each $A_i\in P(X)$.}
\end{align*}

The monad $(P,\delta,\varepsilon)$ may be seen as a comonad in $\PSetop$. Moreover, it is easy to check that each $P(X,x_0)$ is projective in $\PSetop$, so that $\PSetop$ is equipped with comonadic projective covers; we are in the conditions of Subsection~\ref{Natural imaginary splittings}. A natural imaginary splitting in $\PSetop$ corresponds to a natural transformation $t$ in $\PSet$. We define, for any pair of pointed sets $(A,*)$ and $(B,*)$,
\begin{equation}\label{ex of t}
 t_{A,B}\colon (A\times B, (*,*)) \to (P(A+B),\{*\})\colon (a,b) \mapsto \{\underline{a},\overline{b},*\}
\end{equation}
It is easy to check that $t$ is a natural transformation and that it satisfies the opposite of equality \eqref{im splitting}, for all pointed sets $(A,*)$ and $(B,*)$, namely the condition that the diagram
$$\xymatrix@=30pt{& (P(A+B),\{*\}) \\
 (A\times B, (*,*)) \ar[ur]^-{t_{A,B}} & (A+B, *) \ar[l]^-{\matriz{1_A}{0}{0}{1_B}} \ar[u]_-{\varepsilon_{A+B}}}
$$
commutes in $\PSet$.

An intrinsic Schreier split epimorphism in $\PSetop$ corresponds to a split monomorphism in $\PSet$. The following diagram represents a split monomorphism, given by an injection $f$, and its cokernel in $\PSet$
$$
\xymatrix@!0@=6em{(K,*) & (X,*) \ar[l]^-{k} \ar@<.5ex>@{>>}[r]^-{s} & (Y,*), \ar@{{ >}->}@<.5ex>[l]^-{f}}
$$
where $K=X\backslash Y\cup\{*\}$, $k(y)=*$, for all $y\in Y$ and $k(x)=x$, for all $x\in X\backslash Y$. It is an intrinsic Schreier split monomorphism if there exists a morphism of pointed sets $q\colon (K,*) \to (P(X),\{*\})$ such that the opposite of equalities \iS1 and \iS2 hold. Note that $q(x)\in P(X)$, i.e., $*\in q(x)\subseteq X$, for all $x\in X\backslash Y$.

The opposite of \iS1 is given by the commutativity of the following diagram (we omit the fixed points to make it easier to read)
$$
\xymatrix@C=45pt@R=30pt{P^2(X) \ar[d]_-{\delta_{X}} & \ar[l]_-{P(\bi{1}{1})} P(P(X)+P(X)) & \ar[l]_-{t_{P(X),P(X)}} P(X)\times P(X) \\
 P(X) & & \ar[ll]^-{\varepsilon_{X}} X. \ar[u]_-{\langle qk, \varepsilon_X fs \rangle}}
$$
The commutativity of the diagram above always holds for any element $y\in Y$. For any element $x\in X\backslash Y$, we get
$$
\resizebox{\textwidth}{!}{
\xymatrix@C=40pt@R=30pt{ \{ q(x), \{fs(x),*\}, \{*\}\} \ar@<-4ex>[d]_-{\delta_{X}} & \{\underline{q(x)}, \overline{\{fs(x),*\}},\{*\}\} \ar[l]_-{P(\bi{1}{1})}
 & (q(x), \{fs(x),*\}) \ar[l]_-{t_{P(X),P(X)}} \\
 q(x)\cup\{fs(x),*\} = \{x,*\} & & x. \ar[ll]^-{\varepsilon_{X}} \ar[u]_-{\langle qk, \varepsilon_X fs \rangle}}}
$$
From the equality $q(x)\cup\{fs(x),*\} = \{x,*\}$, and the fact that $s(x)\in Y$ and $x\in X \backslash Y$, we deduce that $s(x)=*$ and $q(x)=\{x,*\}$. As a consequence the split monomorphism is isomorphic to the binary coproduct
$$
\xymatrix@R=30pt@C=30pt{ (X\backslash Y\cup\{*\},*) \ar@{=}[d] & (X,*) \ar@<.5ex>[r]^-{s} \ar@<.5ex>[l]^-{k} \ar[d]|-{\cong}
 & (Y,*) \ar@<.5ex>@{(^->}[l]^-f \ar@{=}[d] \\
 (X\backslash Y\cup\{*\},*) & ((X\backslash Y\cup\{*\}) + Y ,*)\ar@<.5ex>[r]^-{\bi{0}{1_Y}} \ar@<.5ex>[l]^-{\bi{1_K}{0}}
 & (Y,*) \ar@<.5ex>@{(^->}[l]^-{\iota_2}}
$$
It is easy to see that the opposite of equality \iS2 always holds.

We have just proved that in $\PSetop$, with respect to the natural imaginary splitting \eqref{ex of t}, the only intrinsic Schreier split epimorphisms correspond to binary product projections. Moreover, a pointed set $(Y,*)$ is an intrinsic Schreier special object if and only if \eqref{point S-special object} is a product projection, i.e., if and only if it is the zero object.

Note that we could also apply the same approach to the finite power-set monad in $\PSet$.

\section{Intrinsic Schreier special objects vs.\ protomodular objects}\label{isso vs proto objs}
Recall from~\cite{2Chs} that an object $Y$ in a finitely complete category is called a \defn{protomodular object} when all points over it $(f\colon X\to Y, s\colon Y\to X)$ are stably strong. More precisely, for any pullback
$$\xymatrix@=40pt{Z\times_Y X \ar[r] \ar@<-2pt>[d]_(.55){\pi_Z} \ophalfsplitpullback & X \ar@<-2pt>[d]_-f \\
 Z \ar[r]_-g \ar@<-2pt>[u]_(.45){\langle 1_Z,sg\rangle} & Y, \ar@<-2pt>[u]_-s}
$$
the pair $(\langle 0,k\rangle, \langle 1_Z, sg\rangle)$, where $k$ is the kernel of $f$, is a jointly ex\-tremal-epimorphic pair. If the point $(f,s)$ is stably strong, then it is strong, i.e., $(k,s)$ is a jointly extremal-epimorphic pair.

In the category $\Mon$ of monoids the notion of a Schreier special object and the notion of a protomodular object both coincide with that of a group: a monoid is a Schreier special object if and only if it is a group~\cite{S-proto} if and only if it is a protomodular object~\cite{2Chs}.

The question of understanding under which conditions these two notions coincide arises naturally. In general, neither of these notions implies the other, as we showed in~\cite{ise}. Indeed, the variety $\HSLat$ of Heyting semi\-lattices provides an example of a category where all objects are protomodular, but not every object is Schreier special (\cite{ise}, Example 7.7).

On the other hand, the cyclic group $C_2=(\{0,1\},+)$ gives an example of a Schreier special object in the J\'onsson--Tarski variety of unitary magmas $\Mag$, because it is a right loop. However, we gave an example of a point $X \leftrightarrows C_2$ which is not strong. Consequently, $C_2$ is not a protomodular object (\cite{ise}, Example 7.4). Of key importance here is that the unitary magma $(X,+)$ is \emph{non-associative}. As we shall prove next in Corollary~\ref{iSchreier special => proto}, the presence of the associativity axiom (Section~\ref{The associativity axiom}) gives a link between intrinsic Schreier special objects and protomodular objects: then
\begin{quote}
\emph{Every intrinsic Schreier special object is a protomodular object.}
\end{quote}

The proof of this statement follows the same proof for monoids, i.e., that a Schreier special monoid $Y$ is necessarily a group; the inverse of an element $y\in Y$ is given by $q(0,y)$, where $q$ is the imaginary retraction for \eqref{point S-special object} (see Proposition 3.1.6 of~\cite{SchreierBook}). Also, all points over a group are necessarily Schreier split epimorphisms (see Corollary 3.1.7 of~\cite{SchreierBook}).

\begin{lemma}
If \eqref{point S-special object} satisfies \iSs1, then $\mu^Y\diam \langle q\ci \langle 0,1_Y\rangle, \overline{1_Y}\rangle = \overline{0}$:
\begin{equation}\label{q(0,y)+y=0}
\vcenter{\xymatrix@C=60pt@R=30pt{Y \ar@{-->}[r]^-{\langle q\ci \langle 0,1_Y\rangle, \overline{1_Y}\rangle} \ar@{-->}[dr]_-{\overline{0}} & Y\times Y. \ar@{-->}[d]^-{\mu^Y}\\
 & Y}}
\end{equation}
\end{lemma}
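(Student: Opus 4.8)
The plan is to deduce \eqref{q(0,y)+y=0} from the right loop axiom \iL1, which was established in Subsection~\ref{Imaginary (one-sided) loops} to follow from \iSs1 alone (it is obtained by composing \iSs1 with $\pi_1$, using \eqref{naturality of mu}). Recall that \iL1 reads $\mu^Y\diam\langle q,\overline{\pi_2}\rangle=\overline{\pi_1}$, an equality of imaginary morphisms $Y\times Y\dashto Y$. Since the hypothesis of the lemma is exactly \iSs1, this identity is available, and I would obtain the desired statement simply by precomposing it with the real morphism $\langle 0,1_Y\rangle\colon Y\to Y\times Y$, which yields a valid equality of imaginary morphisms $Y\dashto Y$.

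The key formal facts I would invoke are that precomposition by a real morphism distributes over the Kleisli composition $\diam$ and over pairings, and that $\overline{g}\ci u=\overline{gu}$ for real $g$ and $u$; all of these are immediate from Subsection~\ref{Imaginary morphisms induced from comonadic covers} and Remark~\ref{pps of diamond}. On the right-hand side of \iL1 we get $\overline{\pi_1}\ci\langle 0,1_Y\rangle=\overline{\pi_1\langle 0,1_Y\rangle}=\overline{0}$, while on the left-hand side
\[
(\mu^Y\diam\langle q,\overline{\pi_2}\rangle)\ci\langle 0,1_Y\rangle
=\mu^Y\diam\bigl(\langle q,\overline{\pi_2}\rangle\ci\langle 0,1_Y\rangle\bigr)
=\mu^Y\diam\langle q\ci\langle 0,1_Y\rangle,\overline{1_Y}\rangle,
\]
where the last step uses $\overline{\pi_2}\ci\langle 0,1_Y\rangle=\overline{\pi_2\langle 0,1_Y\rangle}=\overline{1_Y}$. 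Comparing the two sides gives precisely \eqref{q(0,y)+y=0}.

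I do not anticipate a serious obstacle: conceptually the statement is the instance at $x=0$ of the loop identity $(x-y)+y=x$, reading $q\ci\langle 0,1_Y\rangle$ as the imaginary ``$0-y=-y$''. The only point requiring care is the compositional bookkeeping, namely distinguishing real precomposition $\ci$ from Kleisli composition $\diam$ and checking that real precomposition passes through both $\diam$ and the pairing $\langle-,-\rangle$. Should one prefer to avoid citing \iL1 and argue directly from the hypothesis, the same conclusion follows by precomposing \iSs1 itself with $\langle 0,1_Y\rangle$ and then projecting along $\pi_1$, which merely re-traces the derivation of \iL1 from \iSs1; this variant is self-contained but slightly longer.
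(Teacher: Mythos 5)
Your proposal is correct and is essentially the paper's own proof: the paper likewise derives \eqref{q(0,y)+y=0} by noting that \iL1 follows from \iSs1 and then precomposing \iL1 with $\langle 0,1_Y\rangle$. Your additional bookkeeping (that real precomposition distributes over $\diam$ and pairings, and $\overline{\pi_2}\ci\langle 0,1_Y\rangle=\overline{1_Y}$) merely spells out steps the paper leaves implicit.
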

\begin{proof} In Subsection~\ref{Imaginary (one-sided) loops} we saw that \iL1 follows from \iSs1. If we precompose \iL1 with $\langle 0,1_Y\rangle\colon Y\to Y\times Y$, we get
\[
\mu^Y\diam \langle q, \overline{\pi_2}\rangle \ci \langle 0,1_Y\rangle = \overline{\pi_1}\ci \langle 0,1_Y\rangle 
 \quad\Leftrightarrow\quad \mu^Y\diam \langle q\ci \langle 0,1_Y\rangle, \overline{1_Y}\rangle = \overline{0}\text{.} \qedhere
\]
\end{proof}

\begin{lemma}
Suppose that the natural addition $(\mu^X\colon X\times X\dashto X)_{X\in \C}$ is associative. If $Y$ is an intrinsic Schreier special object, then
\begin{equation}\label{y+q(0,y)=0}
\vcenter{\xymatrix@C=60pt@R=30pt{Y \ar@{-->}[r]^-{\langle \overline{1_Y}, q\ci \langle 0,1_Y\rangle\rangle} \ar@{-->}[dr]_-{\overline{0}} & Y\times Y \ar@{-->}[d]^-{\mu^Y}\\
 & Y}}
\end{equation}
$\mu^Y\diam \langle \overline{1_Y}, q\ci \langle 0,1_Y\rangle\rangle = \overline{0}$.
\begin{proof} In Subsection~\ref{Imaginary (one-sided) loops} we saw that \iL2 follows from \iSs2. If we precompose \iL2 with $\langle \mu^Y\diam \langle \overline{1_Y}, q\ci \langle 0,1_Y\rangle\rangle,\overline{1_Y}\rangle\colon Y\dashto Y\times Y$, we obtain
\begin{multline*}
q\diam \langle \mu^Y, \overline{\pi_2}\rangle\diam \langle \mu^Y\diam \langle \overline{1_Y}, q\ci \langle 0,1_Y\rangle\rangle,\overline{1_Y}\rangle
 = \overline{\pi_1}\diam \langle \mu^Y\diam \langle \overline{1_Y}, q\ci \langle 0,1_Y\rangle\rangle,\overline{1_Y}\rangle,
\end{multline*}
which is equivalent to
\begin{align*}
& q\diam \langle \mu^Y\diam \langle \mu^Y\diam \langle \overline{1_Y}, q\ci \langle 0,1_Y\rangle\rangle,\overline{1_Y}\rangle, \overline{1_Y}\rangle = \mu^Y\diam \langle \overline{1_Y}, q\ci \langle 0,1_Y\rangle\rangle\\
 &\stackrel{\mathclap{\eqref{assoc}}}{\Leftrightarrow} \quad q\diam \langle \mu^Y\diam \langle \overline{1_Y},\mu^Y\diam \langle q\ci \langle 0,1_Y\rangle, \overline{1_Y}\rangle \rangle, \overline{1_Y}\rangle = \mu^Y\diam \langle \overline{1_Y}, q\ci \langle 0,1_Y\rangle\rangle\\
 &\stackrel{\mathclap{\eqref{q(0,y)+y=0}}}{\Leftrightarrow} \quad q\diam \langle \mu^Y\diam \langle \overline{1_Y},\overline{0} \rangle, \overline{1_Y}\rangle = \mu^Y\diam \langle \overline{1_Y}, q\ci \langle 0,1_Y\rangle\rangle\\
 &\Leftrightarrow \quad q\diam \langle \mu^Y\ci \langle 1_Y,0\rangle, \overline{1_Y}\rangle = \mu^Y\diam \langle \overline{1_Y}, q\ci \langle 0,1_Y\rangle\rangle\\
 &\stackrel{\mathclap{\eqref{x+0=x}}}{\Leftrightarrow} \quad q\diam \langle \overline{1_Y}, \overline{1_Y}\rangle = \mu^Y\diam \langle \overline{1_Y}, q\ci \langle 0,1_Y\rangle\rangle\\
 &\Leftrightarrow \quad q\ci \langle 1_Y,1_Y\rangle = \mu^Y\diam \langle \overline{1_Y}, q\ci \langle 0,1_Y\rangle\rangle\\
 &\stackrel{\mathclap{\iSs4}}{\Leftrightarrow} \quad \overline{0} = \mu^Y\diam \langle \overline{1_Y}, q\ci \langle 0,1_Y\rangle\rangle.\qedhere
\end{align*}
\end{proof}
\end{lemma}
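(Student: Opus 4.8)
The plan is to reproduce, inside the imaginary calculus of Section~\ref{The associativity axiom}, the classical argument that a left inverse together with associativity yields a two-sided inverse. Write $n\coloneq q\ci\langle 0,1_Y\rangle\colon Y\dashto Y$ for the candidate inverse ``$-y$''. With this notation the previous lemma~\eqref{q(0,y)+y=0} reads $\mu^Y\diam\langle n,\overline{1_Y}\rangle=\overline{0}$ (the left-inverse relation ``$(-y)+y=0$''), while the target~\eqref{y+q(0,y)=0} reads $\mu^Y\diam\langle\overline{1_Y},n\rangle=\overline{0}$ (the right-inverse relation ``$y+(-y)=0$''). Set $w\coloneq\mu^Y\diam\langle\overline{1_Y},n\rangle$; the goal is to show $w=\overline{0}$.

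The lever is the right loop axiom \iL2, that is $q\diam\langle\mu^Y,\overline{\pi_2}\rangle=\overline{\pi_1}$ (``$(x+y)-y=x$''), which was derived from \iSs2 for an intrinsic Schreier special object in Subsection~\ref{Imaginary (one-sided) loops}. First I would instantiate \iL2 at $x=w$ by precomposing both sides with $\langle w,\overline{1_Y}\rangle\colon Y\dashto Y\times Y$. The right-hand side collapses to $\overline{\pi_1}\diam\langle w,\overline{1_Y}\rangle=w$, so after simplifying the second slot the identity becomes $q\diam\langle\mu^Y\diam\langle w,\overline{1_Y}\rangle,\overline{1_Y}\rangle=w$; it now suffices to prove that the left-hand side equals $\overline{0}$.

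The crux is the simplification of the inner sum $\mu^Y\diam\langle w,\overline{1_Y}\rangle$, which expands to ``$(y+(-y))+y$''. Using the associativity axiom~\eqref{assoc} I would rebracket it as ``$y+((-y)+y)$''; the inner sum is exactly the left-inverse relation~\eqref{q(0,y)+y=0}, hence equals $\overline{0}$, and then the unit law~\eqref{x+0=x} reduces ``$y+0$'' to $\overline{1_Y}$ via $\mu^Y\ci\langle 1_Y,0\rangle=\overline{1_Y}$. Thus the left-hand side becomes $q\diam\langle\overline{1_Y},\overline{1_Y}\rangle=q\ci\langle 1_Y,1_Y\rangle$, which is $\overline{0}$ by the subtraction axiom \iSs4. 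Combining, $w=\overline{0}$, which is precisely~\eqref{y+q(0,y)=0}.

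The only real difficulty is bookkeeping rather than mathematical depth: since imaginary morphisms do not admit naive element-chasing, each rewrite must be expressed through the diamond composition $\diam$ and justified by a named identity, and one must track the nested tuplings $\langle-,-\rangle$ with care---especially when passing between ``$\diam$'' and ``$\ci$'' for the sub-expressions that happen to be real morphisms, such as $\langle 1_Y,0\rangle$ and $\langle 1_Y,1_Y\rangle$. Correctly aligning these bracketings so that each use of~\eqref{assoc} applies to a genuine triple of imaginary morphisms $Y\dashto Y$ is the step that demands the most attention.
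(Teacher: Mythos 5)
Your proposal is correct and follows essentially the same route as the paper's own proof: you precompose \iL2 with $\langle \mu^Y\diam\langle\overline{1_Y},q\ci\langle 0,1_Y\rangle\rangle,\overline{1_Y}\rangle$, rebracket the inner sum via the associativity axiom~\eqref{assoc}, collapse it using~\eqref{q(0,y)+y=0} and the unit law~\eqref{x+0=x}, and conclude with \iSs4. There is nothing to add.
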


\begin{proposition}\label{Y special => iS1}
Suppose that the natural addition $(\mu^X\colon X\times X\dashto X)_{X\in \C}$ is associative. If $Y$ is an intrinsic Schreier special object, then any split epimorphism \eqref{iSchreier} satisfies \iS1.
\end{proposition}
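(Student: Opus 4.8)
The plan is to imitate the classical fact that every point over a group is a Schreier split epimorphism, with the group inverse replaced by the imaginary negation coming from the intrinsic Schreier special structure of $Y$. Writing $q_Y\colon Y\times Y\dashto Y$ for the imaginary retraction of the point \eqref{point S-special object} and setting $\iota\coloneq q_Y\ci\langle 0,1_Y\rangle\colon Y\dashto Y$, the two lemmas just established, \eqref{q(0,y)+y=0} and \eqref{y+q(0,y)=0}, say exactly that $\mu^Y\diam\langle\iota,\overline{1_Y}\rangle=\overline 0$ and $\mu^Y\diam\langle\overline{1_Y},\iota\rangle=\overline 0$; that is, $\iota$ is a two-sided additive inverse for $\mu^Y$. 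These are the only special properties of $Y$ that will be used.

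First I would construct the candidate imaginary retraction. Consider the imaginary morphism $w\coloneq\mu^X\diam\langle\overline{1_X},\,s\ci\iota\ci f\rangle\colon X\dashto X$, which is the intrinsic analogue of $x\mapsto x+s(-f(x))$. To factor $w$ through the kernel $k$ I would compute $f\ci w$: pulling $f$ through $\mu^X$ by naturality \eqref{naturality of mu} gives $f\ci w=\mu^Y\diam\langle\overline f,\,f\ci s\ci\iota\ci f\rangle$, and since $fs=1_Y$ the second entry collapses to $\iota\ci f$, so that $f\ci w=\mu^Y\diam\langle\overline{1_Y},\iota\rangle\ci f=\overline 0$ by \eqref{y+q(0,y)=0}. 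As $k$ is the kernel of $f$ and $fw=0$ in $\C$, there is a unique $q\colon X\dashto K$ with $k\ci q=w$; this $q$ is the retraction I propose (and it is the $q$ appearing in \iS1).

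Next I would verify \iS1, i.e.\ $\mu^X\diam\langle k\ci q,\overline{sf}\rangle=\overline{1_X}$. Substituting $k\ci q=w$ and reassociating by the associativity axiom \eqref{assoc} turns the left-hand side into $\mu^X\diam\langle\overline{1_X},\,\mu^X\diam\langle s\ci\iota\ci f,\overline{sf}\rangle\rangle$. For the inner summand I would factor out $s$ on the left (using $\overline{sf}=s\ci\overline f$ and $\langle\iota\ci f,\overline f\rangle=\langle\iota,\overline{1_Y}\rangle\ci f$) and apply naturality $\mu^X\ci(s\times s)=s\ci\mu^Y$, reducing it to $s\ci(\mu^Y\diam\langle\iota,\overline{1_Y}\rangle)\ci f=\overline 0$ via \eqref{q(0,y)+y=0}. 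What is left is $\mu^X\diam\langle\overline{1_X},\overline 0\rangle=\mu^X\ci\langle 1_X,0\rangle=\overline{1_X}$ by \eqref{x+0=x}, which is \iS1.

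The genuinely delicate part is the bookkeeping between the direct composition $\ci$ and the Kleisli composition $\diam$: every rewrite above is licensed by the fact that whenever one factor is real the diamond composite collapses to the direct one, together with the small identities $f\ci s=\overline{1_Y}$, $\overline{sf}=s\ci\overline f$ and $\langle\iota,\overline{1_Y}\rangle\ci f=\langle\iota\ci f,\overline f\rangle$. I expect no structural obstacle beyond this; the two points that must be watched are that the associativity axiom \eqref{assoc} is invoked precisely where the monoid argument uses associativity of the operation on $X$, and that the factorization of $w$ through $k$ is the step where the kernel property (rather than any Schreier hypothesis on $(f,s)$) does the work.
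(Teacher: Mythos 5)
Your proposal is correct and follows essentially the same route as the paper's proof: the candidate retraction $w=\mu^X\diam\langle \overline{1_X},\, s\ci\iota\ci f\rangle$ with $\iota=q_Y\ci\langle 0,1_Y\rangle$ is exactly the morphism the paper factors through the kernel (written there as $\mu^X\ci(1_X\times s)\ci(1_X\times(q\ci\langle 0,1_Y\rangle))\ci\langle 1_X,f\rangle$), the factorisation is justified by the same computation $f\ci w=\mu^Y\diam\langle\overline{1_Y},\iota\rangle\ci f=\overline 0$ via \eqref{y+q(0,y)=0}, and the verification of \iS1 uses the identical sequence of steps---associativity \eqref{assoc}, naturality $\mu^X\ci(s\times s)=s\ci\mu^Y$, the lemma \eqref{q(0,y)+y=0}, and finally \eqref{x+0=x}. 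Your closing remarks about where the Kleisli--direct composition bookkeeping and the kernel property (rather than any Schreier hypothesis on $(f,s)$) enter are accurate and match the paper's use of these ingredients.
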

\begin{proof} We define an imaginary morphism $\rho\colon X\dashto K$ through the universal property of the kernel:
$$
\xymatrix@C=40pt@R=30pt{K \ar@{ |>->}[r]^-k & X \ar@<-.5ex>@{>>}[r]_-{f} & Y. \ar@<-.5ex>[l]_-{s} \\
 & X \ar@{-->}[u]_(.4){\mu^X\ci (1_X\times s) \ci (1_X\times(q\ci \langle 0,1_Y\rangle))\ci \langle 1_X,f\rangle} \ar@{.>}[ul]^-{\rho}}
$$
Indeed,
\begin{align*}
& f\ci \mu^X\ci (1_X\times s) \ci (1_X\times(q\ci \langle 0,1_Y\rangle))\ci \langle 1_X,f\rangle\\
 &\stackrel{\mathclap{\eqref{naturality of mu}}}{=} \quad \mu^Y \ci (f\times f) \ci \langle \overline{1_X}, s\ci q\ci \langle 0,1_Y\rangle f \rangle \\
 &= \quad \mu^Y \diam \langle \overline{f}, q\ci \langle 0,1_Y\rangle f\rangle 
\; = \; \mu^Y \diam \langle \overline{1_Y}, q\ci \langle 0,1_Y\rangle \rangle \ci f
 \;\stackrel{\mathclap{\eqref{y+q(0,y)=0}}}{=} \; \overline{0}
\end{align*}

Now we must check \iS1 for \eqref{iSchreier}:
\begin{align*}
\mu^X\diam \langle k\ci \rho, \overline{sf}\rangle \; & = \;
 \mu^X\diam \langle \mu^X\diam \langle \overline{1_X}, s\ci q\ci \langle 0,1_Y\rangle f \rangle, \overline{sf}\rangle \\
 & \stackrel{\mathclap{\eqref{assoc}}}{=} \; \mu^X \diam \langle \overline{1_X}, \mu^X\diam \langle s\ci q\ci \langle 0,1_Y\rangle f,\overline{sf}\rangle \rangle \\
 & = \; \mu^X \diam \langle \overline{1_X}, \mu^X\ci (s\times s) \ci \langle q\ci \langle 0,1_Y\rangle,\overline{1_Y}\rangle\ci f \rangle \\
 & \stackrel{\mathclap{\eqref{naturality of mu}}}{=} \; \mu^X \diam \langle \overline{1_X}, s\ci \mu^Y \diam \langle q\ci \langle 0,1_Y\rangle,\overline{1_Y}\rangle\ci f \rangle \\
 & \stackrel{\mathclap{\eqref{q(0,y)+y=0}}}{=} \; \mu^X \diam \langle \overline{1_X}, \overline{0} \rangle
 \;= \; \mu^X\ci \langle 1_X,0\rangle
\; \stackrel{\mathclap{\eqref{x+0=x}}}{=} \; \overline{1_X}.\qedhere
\end{align*}
\end{proof}

\begin{corollary}\label{iSchreier special => proto} Suppose that the natural addition $(\mu^X\colon X\times X\dashto X)_{X\in \C}$ is associative. Then every intrinsic Schreier special object is a protomodular object.
\end{corollary}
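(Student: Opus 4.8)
The plan is to show that, under associativity, \emph{every} point $(f,s)$ with codomain $Y$ is an intrinsic Schreier split epimorphism; the Corollary then follows formally. Indeed, the class of intrinsic Schreier split epimorphisms is stable under pullback along arbitrary morphisms (Proposition~6.1 in~\cite{ise}, as already used in the proof of Corollary~\ref{2.3.6}), and by $\s$-protomodularity (Example~\ref{ex s-proto} together with Definition~\ref{S-protomodular category}(1)) every point in this class is a strong point. Hence each pullback of $(f,s)$ is again intrinsic Schreier and therefore strong, so $(f,s)$ is stably strong; as $(f,s)$ was an arbitrary point over $Y$, this is precisely the assertion that $Y$ is a protomodular object.

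So the real work is to verify \iS1 and \iS2 for an arbitrary point $(f,s)$ over $Y$, using a single imaginary retraction $\rho$. Condition \iS1 is exactly Proposition~\ref{Y special => iS1}, where $\rho$ is the factorisation through $k$ of $\mu^X\ci(1_X\times s)\ci(1_X\times(q\ci\langle 0,1_Y\rangle))\ci\langle 1_X,f\rangle$; informally $k\ci\rho$ is the imaginary morphism ``$x\mapsto x+s(q(0,f(x)))$'', i.e.\ $x\mapsto x-sf(x)$. What remains is \iS2, and for this I would mimic the monoid computation $\rho(k(a)+s(y))=a$. Since $k$ is a monomorphism it suffices to prove $k\ci\bigl(\rho\diam\mu^X\ci(k\times s)\bigr)=\overline{k\pi_K}$, which, as $k$ is real, equals $(k\ci\rho)\diam\mu^X\ci(k\times s)$. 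Using $fk=0$ and $fs=1_Y$ together with the naturality of $\mu$ \eqref{naturality of mu} one first reduces the inner argument via $f\ci\mu^X\ci(k\times s)=\overline{\pi_Y}$, so that $s(q(0,f(-)))$ collapses to $s(q(0,\overline{\pi_Y}))$; associativity \eqref{assoc} and the naturality of $\mu$ for $s$ then let me regroup $(k\circ\pi_K)+\bigl(s\pi_Y+s\,q(0,\pi_Y)\bigr)$, and the two-sided inverse identity \eqref{y+q(0,y)=0} (``$y+q(0,y)=0$'') cancels the last summand to $0$, leaving $k\ci\overline{\pi_K}$ by \eqref{x+0=x}.

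I expect the main obstacle to be exactly this \iS2 computation, for two reasons. First, one must resist invoking Lemma~\ref{aux} to strip the outer $k$, since that lemma presupposes \iS2; the cancellation has to be done by hand, using only that $k$ is a real monomorphism. Second, the bookkeeping between the Kleisli composition $\diam$ and the mixed composition $\ci$ is delicate, and the naturality square \eqref{naturality of mu} must be applied to the right imaginary morphisms, much as in the proofs of Propositions~\ref{iS7} and~\ref{Y special => iS1}; no genuinely new tool is needed, the difficulty is purely in the careful assembly. It is worth noting that the two inverse identities play complementary roles: \eqref{q(0,y)+y=0} feeds the existence statement \iS1, while \eqref{y+q(0,y)=0} feeds the uniqueness-type statement \iS2. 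This is the intrinsic shadow of $q(0,y)$ being a two-sided inverse, exactly mirroring the passage ``Schreier special object $\Rightarrow$ group, and all points over a group are Schreier'' recalled before the lemmas.
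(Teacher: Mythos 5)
Your proof is correct, but it takes a genuinely different route from the paper's. The paper's proof is two lines: Proposition~\ref{Y special => iS1} gives \iS1 for every point over the intrinsic Schreier special object $Y$ (with exactly the $\rho$ you describe), and then it invokes Proposition~5.8 of~\cite{ise}, which states that \iS1 \emph{alone} makes a point stably strong --- condition \iS2 is never touched. You instead prove the stronger statement that every point over $Y$ is a full intrinsic Schreier split epimorphism, verifying \iS2 by hand, and then deduce stable strongness from pullback-stability of the class (Proposition~6.1 of~\cite{ise}) together with condition~1 of Definition~\ref{S-protomodular category} via Example~\ref{ex s-proto}. Your \iS2 computation is sound as sketched: since $f\ci\mu^X\ci(k\times s)=\overline{\pi_Y}$ by \eqref{naturality of mu} and \eqref{0+x=x}, one gets
\[
(k\ci\rho)\diam\mu^X\ci(k\times s)
\stackrel{\eqref{assoc}}{=}\mu^X\diam\bigl\langle\overline{k\pi_K},\,\mu^X\diam\langle\overline{s\pi_Y},\, s\ci q\ci\langle 0,1_Y\rangle\ci\overline{\pi_Y}\rangle\bigr\rangle,
\]
the inner bracket collapses to $\overline{0}$ by naturality of $\mu$ for $s$ and \eqref{y+q(0,y)=0}, and \eqref{x+0=x} leaves $\overline{k\pi_K}=k\ci\overline{\pi_K}$; cancelling the real monomorphism $k$ at the end is legitimate and correctly avoids the circular appeal to Lemma~\ref{aux}. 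As for what each approach buys: the paper's is leaner, outsourcing all the work on strong points to the cited \iS1-only result; yours costs one extra computation but establishes the intrinsic analogue of Corollary~3.1.7 of~\cite{SchreierBook} (all points over an intrinsic Schreier special object are intrinsic Schreier), which is of independent interest and exactly mirrors the monoid story the paper alludes to just before its lemmas. One cosmetic imprecision: \eqref{y+q(0,y)=0} does not only ``feed \iS2'' --- it is already needed in Proposition~\ref{Y special => iS1} to show that $\mu^X\ci (1_X\times s) \ci (1_X\times(q\ci \langle 0,1_Y\rangle))\ci \langle 1_X,f\rangle$ factors through the kernel, i.e., for the very existence of $\rho$, while \eqref{q(0,y)+y=0} enters the \iS1 verification itself.
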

\begin{proof} This follows from Proposition~\ref{Y special => iS1} and Proposition 5.8 in~\cite{ise}, which states that any split epimorphism satisfying \iS1 is stably strong.
\end{proof}

\begin{remark}\label{last remarks1}
Even if the natural addition $(\mu^X\colon X\times X\dashto X)_{X\in \C}$ is associative, the converse of Corollary~\ref{iSchreier special => proto} may be false. As mentioned above, the variety $\HSLat$ of Heyting semi\-lattices provides an example of a category where all objects are protomodular, but not all are Schreier special objects. The natural addition for $\HSLat$, given by the meet, is associative.
\end{remark}

\begin{remark}\label{last remarks2}
The variety $\Loop$ of (left and right) loops gives an example where the natural addition is non-associative. All loops are intrinsic Schreier special objects (see Section~\ref{Intrinsic Schreier special objects}) and they are also all protomodular objects (because $\Loop$ is a semi-abelian category, thus a protomodular category). So, the fact that all intrinsic Schreier special objects are protomodular objects does not imply that the natural addition is associative.
\end{remark}

From the remark above, in $\Gp$ all objects are intrinsic Schreier special with respect to its usual group operation. Also, all objects are protomodular since $\Gp$ is a protomodular category. So the two notions coincide in $\Gp$, just as in the case of~$\Mon$. However, in $\Mon$ there are only two possible choices of imaginary splittings (see Subsection~\ref{Natural imaginary splittings}). In $\Gp$ there are countably many possibilities. Given groups $X$ and $Y$, a natural imaginary splitting $t\colon P(X\times Y)\to X+Y$ may be defined by making $t([(x,y)])$ equal to any combination of alternating products of $\underline{x}$ or $\underline{x}^{-1}$, and of $\overline{y}$ or $\overline{y}^{\,-1}$, for which the products of the $x$'s gives $x$ and the products of the $y$'s gives $y$. For example $\underline{x}^{-1}\overline{y}\underline{x}^2$ or $\underline{x}\overline{y}\underline{x}^{-1}\overline{y}^{\,-1}\underline{x}\overline{y}$.

Although these notions are independent in general, as we have already observed, there are special properties of the category of groups that make the notions coincide. From Corollary~\ref{iSchreier special => proto}, we know that the associativity of the group operation implies that all intrinsic Schreier special objects are protomodular objects. This associativity is not enough to guarantee that every protomodular object is intrinsic Schreier special (Remark~\ref{last remarks2}). This leads us to the following question:
\begin{quote}
\emph{What property of $\Gp$ guarantees that all protomodular objects are intrinsic Schreier special ones?}
\end{quote}
We cannot answer this question now, but we can see that groups lack a certain homogeneity, in the sense that the concept of an intrinsic Schreier special object strongly depends on the chosen natural imaginary splitting. We can eliminate this discrepancy by considering groups which satisfy the property with respect to \emph{all} natural imaginary splittings. Then we find:

\section{The variety of $2$-Engel groups}

The aim of this section is to show that $2$-Engel groups are intrinsic Schreier special objects with respect to \emph{all} natural imaginary splittings: Proposition~\ref{iSchreier for all t}.

We begin by recalling the definition and main properties of $2$-Engel groups needed in the sequel, which can be found in~\cite{Burnside, Kappe WP, Kappe LC}.

Here we denote the conjugation of an element $x$ by an element $y$ as ${}^yx=y x y^{-1}$ and we write $[x,y]$ for $xyx^{-1}y^{-1}={}^xyy^{-1}$. Then
\begin{align*}
[xy,z] &= xyzy^{-1}x^{-1}z^{-1} = xyzy^{-1}z^{-1}x^{-1}xzx^{-1}z^{-1} \\&= x[y,z]x^{-1}[x,z] = {}^x[y,z][x,z].
\end{align*}
Likewise, $[x,yz]=[x,y]{}^y[x,z]$. Note also that $[x,y]^{-1}=[y,x]$.

\begin{proposition} \label{equivalent properties to 2-Engel}
For a group $E$, the following conditions are equivalent:
\begin{tfae}
\item $[[x,y],y]=1$ for all $x$, $y\in E$;
\item $[[x,y],x]=1$ for all $x$, $y\in E$;
\item $[{}^yx,x]=1$ for all $x$, $y\in E$.
\end{tfae}
\end{proposition}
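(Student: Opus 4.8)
The plan is to prove the three-way equivalence by reducing each condition to a common normal form, using only the elementary commutator calculus already recorded in the excerpt together with the symmetry $[y,x]=[x,y]^{-1}$. The one ingredient I would add is a conjugation formula for inverse commutators:
\[
[a^{-1},b] = {}^{a^{-1}}\bigl([a,b]^{-1}\bigr),
\]
valid for all $a$, $b$ in any group; this is a one-line check, since both sides expand to $a^{-1}bab^{-1}$. Because conjugation and inversion are bijections fixing the neutral element, this identity yields the principle I would lean on throughout: \emph{for a fixed $b$, one has $[a,b]=1$ if and only if $[a^{-1},b]=1$}.

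For the equivalence of (i) and (ii) I would argue as follows. Assuming (i) holds for all $x$, $y$, I may in particular interchange the two variables to obtain $[[y,x],x]=1$ for all $x$, $y$. Rewriting the inner commutator via $[y,x]=[x,y]^{-1}$ turns this into $[[x,y]^{-1},x]=1$, and then the principle above (applied with $a=[x,y]$, $b=x$) converts it into $[[x,y],x]=1$, which is exactly (ii). The reverse implication is the same computation read backwards, with the roles of $x$ and $y$ swapped, so (i) $\Leftrightarrow$ (ii).

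For (ii) $\Leftrightarrow$ (iii) the first step is to simplify the third expression. From $[y,x]={}^{y}x\,x^{-1}$ one has ${}^{y}x=[y,x]\,x$, and feeding this into the expansion $[ab,c]={}^{a}[b,c][a,c]$ (with $a=[y,x]$, $b=c=x$) gives
\[
[{}^{y}x,x] = [[y,x]\,x,\,x] = {}^{[y,x]}[x,x]\,[[y,x],x] = [[y,x],x],
\]
since $[x,x]=1$. Thus (iii) is literally the assertion that $[[y,x],x]=1$ for all $x$, $y$, and exactly the same manipulation as before (rewrite $[y,x]=[x,y]^{-1}$, then apply the inverse-commutator principle) identifies this with (ii). Concatenating the two equivalences closes the cycle.

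The manipulations are all routine; the step demanding the most care is the bookkeeping in the inverse-commutator principle, namely checking that passing from $[[x,y]^{-1},x]$ to $[[x,y],x]$ really does preserve triviality. This is where the conjugation formula does the essential work, since the two commutators are not equal but differ by conjugation and inversion — harmless for the property of being trivial, but exactly the point one must not skip. A secondary point to keep straight is the quantifier move of interchanging $x$ and $y$, which is legitimate only because each hypothesis is assumed for \emph{all} pairs.
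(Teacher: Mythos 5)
Your proof is correct and takes essentially the same route as the paper's: both reduce (i) to the statement $[[y,x],x]=1$ by the quantifier swap, convert this to (ii) by handling the inversion $[y,x]=[x,y]^{-1}$, and identify (iii) with it by applying the expansion $[ab,c]={}^{a}[b,c]\,[a,c]$ to ${}^{y}x=[y,x]\,x$ (the paper does the mirror-image computation $[[y,x],x]=[{}^{y}x\,x^{-1},x]$). The only, harmless, divergence is the mechanism for the inversion step: the paper expands $1=[[x,y][y,x],x]={}^{[x,y]}[[y,x],x]\,[[x,y],x]$, whereas you use the identity $[a^{-1},b]={}^{a^{-1}}\bigl([a,b]^{-1}\bigr)$ --- two one-line computations with the same effect.
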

\begin{proof}
First note that $[[x,y],y]=1$ for all $x$, $y\in E$ if and only if $[[y,x],x]=1$ for all $x$, $y\in E$. Now
\[
1=[1,x]=[[x,y][x,y]^{-1},x]=[[x,y][y,x],x]={}^{[x,y]}[[y,x],x][[x,y],x]
\]
so that $[[y,x],x]=1$ if and only if $[[x,y],x]=1$, and i.\ and ii.\ are equivalent. Finally, 
\[
[[y,x],x] = [{}^yxx^{-1},x] = {}^{{}^yx}[x^{-1}, x][{}^yx,x] = [{}^yx, x],
\]
which shows that $[[y,x],x]=1$ if and only if $[{}^yx,x]=1$, and i.\ and iii.\ are equivalent.
\end{proof}

\begin{definition}\label{2-Engel group}\label{2-Engel}A group $E$ is called a \defn{$2$-Engel group} if it satisfies any of the equivalent conditions of the previous proposition.
\end{definition}

\begin{example}
\begin{enumerate}
 \item Any abelian group is obviously a $2$-Engel group.
 \item The group of quaternions ${Q_8}$ is a $2$-Engel group which is not abelian.
 \item The smallest non $2$-Engel (thus non-abelian) group is the symmetric group ${S_3}$ (which is isomorphic to the dihedral group ${D_6}$).
 \item The dihedral group ${D_8}$ is $2$-Engel, but the dihedral group ${D_{10}}$ is not (see Example~\ref{counterexample}).
\end{enumerate}
\end{example}

\begin{lemma}\label{lemma for 2-Engel} Let $E$ be a $2$-Engel group. Then:
\begin{enumerate}
\item $[xy,z]={}^{x}([y,z][x,z])$, for all $x$, $y$, $z\in E$;
\item $[x,yz]={}^{y}([x,y][x,z])$, for all $x$, $y$, $z\in E$.
\end{enumerate}
\end{lemma}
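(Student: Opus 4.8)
The plan is to combine the general commutator expansions derived just before Proposition~\ref{equivalent properties to 2-Engel} with the defining $2$-Engel identities. Recall that in any group we have $[xy,z]={}^x[y,z][x,z]$ and $[x,yz]=[x,y]\,{}^y[x,z]$. Since conjugation by a fixed element is a group automorphism, the only thing separating these general formulas from the claimed ones is the ability to pull the conjugation out over the product, i.e.\ to control the ``extra'' conjugated factors ${}^x[x,z]$ and ${}^y[x,y]$. The point is that the $2$-Engel hypothesis makes each of these extra factors trivial as a conjugation.

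For part~(1), I would start from $[xy,z]={}^x[y,z][x,z]$ and observe that it is enough to replace the trailing factor $[x,z]$ by ${}^x[x,z]$. This replacement is legitimate precisely because $[x,z]$ commutes with $x$, that is, because $[[x,z],x]=1$; and this is condition~ii.\ of Proposition~\ref{equivalent properties to 2-Engel} with $y$ taken to be $z$. Once we know ${}^x[x,z]=x[x,z]x^{-1}=[x,z]$, using that conjugation by $x$ is a homomorphism yields ${}^x[y,z][x,z]={}^x[y,z]\,{}^x[x,z]={}^x\bigl([y,z][x,z]\bigr)$, which is the desired formula.

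For part~(2), I would argue symmetrically, starting from $[x,yz]=[x,y]\,{}^y[x,z]$ and rewriting the leading factor as $[x,y]={}^y[x,y]$. This is the assertion that $[x,y]$ commutes with $y$, i.e.\ that $[[x,y],y]=1$, which is condition~i.\ of Proposition~\ref{equivalent properties to 2-Engel}. Again invoking that conjugation by $y$ is a homomorphism gives $[x,y]\,{}^y[x,z]={}^y[x,y]\,{}^y[x,z]={}^y\bigl([x,y][x,z]\bigr)$, as required.

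I do not expect a genuine obstacle here: the real content reduces to the elementary observation that $[[a,b],b]=1$ is equivalent to $b$ commuting with $[a,b]$, hence to ${}^b[a,b]=[a,b]$. The only care needed is bookkeeping---selecting the correct one of the three equivalent $2$-Engel conditions in each of the two cases (condition~ii.\ with $y\rightsquigarrow z$ for part~(1), and condition~i.\ for part~(2))---after which each identity follows in a single line from the homomorphism property of conjugation.
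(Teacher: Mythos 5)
Your proof is correct and is essentially the paper's own argument: both start from the general expansions $[xy,z]={}^x[y,z][x,z]$ and $[x,yz]=[x,y]\,{}^y[x,z]$, then invoke condition~ii.\ of Proposition~\ref{equivalent properties to 2-Engel} (with $y\rightsquigarrow z$) for part~(1) and condition~i.\ for part~(2). The only cosmetic difference is that the paper moves $x^{-1}$ (resp.\ $y$) explicitly across the commuting factor, whereas you phrase the identical step as ${}^x[x,z]=[x,z]$ (resp.\ ${}^y[x,y]=[x,y]$) followed by the homomorphism property of conjugation.
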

\begin{proof}
1. $[xy,z]={}^{x}[y,z][x,z]=x[y,z]x^{-1}[x,z]\stackrel{\text{\ref{equivalent properties to 2-Engel}.ii}}{=} x[y,z][x,z]x^{-1}$.

2. $[x,yz]=[x,y]{}^{y}[x,z]=[x,y]y[x,z]y^{-1}\stackrel{\text{\ref{equivalent properties to 2-Engel}.i}}{=} y[x,y][x,z]y^{-1}$.
\end{proof}

\begin{proposition} \label{more equivalences to 2-Engel}
For a group $G$, the following conditions are equivalent:
\begin{tfae}
\item $G$ is a $2$-Engel group;
\item $[x,y^{-1}]=[x,y]^{-1}$ for all $x$, $y \in G$;
\item $[x^{-1},y]=[x,y]^{-1}$ for all $x$, $y \in G$;
\item $[x,y^k]=[x,y]^k$, for all $x$, $y \in G$ and all $k\in \Z$;
\item $[x^k,y]=[x,y]^k$, for all $x$, $y \in G$ and all $k\in \Z$.
\end{tfae}

\end{proposition}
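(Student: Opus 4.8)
The plan is to establish the cycle of implications by first proving the two basic equivalences (i)$\Leftrightarrow$(ii) and (i)$\Leftrightarrow$(iii), and then bootstrapping the ``power'' versions (iv) and (v) from them. The essential tool is the general commutator expansion $[x,yz]=[x,y]\,{}^y[x,z]$, which holds in \emph{any} group (as recalled just before Proposition~\ref{equivalent properties to 2-Engel}), together with the reformulations of the $2$-Engel condition given there: that $[x,y]$ commutes with $y$ (the equation $[[x,y],y]=1$) and with $x$ (the equation $[[x,y],x]=1$).

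For (i)$\Leftrightarrow$(ii), I would set $z=y^{-1}$ in the expansion to get $1=[x,yy^{-1}]=[x,y]\,{}^y[x,y^{-1}]$, so that in every group
\[
[x,y^{-1}]={}^{y^{-1}}\!\bigl([x,y]^{-1}\bigr).
\]
Hence (ii) holds if and only if ${}^{y^{-1}}([x,y]^{-1})=[x,y]^{-1}$, i.e. $y$ commutes with $[x,y]$, i.e. $[[x,y],y]=1$; by Proposition~\ref{equivalent properties to 2-Engel} this is exactly the $2$-Engel property. The equivalence (i)$\Leftrightarrow$(iii) is entirely symmetric: expanding $1=[xx^{-1},y]={}^x[x^{-1},y]\,[x,y]$ yields $[x^{-1},y]={}^{x^{-1}}([x,y]^{-1})$ in any group, so (iii) amounts to $x$ commuting with $[x,y]$, which is the condition $[[x,y],x]=1$ of Proposition~\ref{equivalent properties to 2-Engel}.

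For the power versions, (iv)$\Rightarrow$(ii) and (v)$\Rightarrow$(iii) are immediate by taking $k=-1$, so it remains to prove (i)$\Rightarrow$(iv) and (i)$\Rightarrow$(v). For (iv) I would induct on $k\geq 0$: the cases $k=0,1$ are trivial, and
\[
[x,y^{k+1}]=[x,y^k y]=[x,y^k]\,{}^{y^k}[x,y]=[x,y]^k\,[x,y]=[x,y]^{k+1},
\]
where the third equality uses the induction hypothesis and the fact that $y$, hence $y^k$, commutes with $[x,y]$ by the $2$-Engel property. Negative exponents then follow from (ii): $[x,y^{-k}]=[x,y^k]^{-1}=[x,y]^{-k}$. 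The implication (i)$\Rightarrow$(v) is proved identically, inducting with the identity $[x^{k+1},y]={}^{x^k}[x,y]\,[x^k,y]$ and using that $x$ commutes with $[x,y]$, then extending to negative $k$ via (iii).

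Assembling the pieces (i)$\Leftrightarrow$(ii), (i)$\Leftrightarrow$(iii), (i)$\Rightarrow$(iv)$\Rightarrow$(ii) and (i)$\Rightarrow$(v)$\Rightarrow$(iii) proves all five conditions equivalent. The argument is essentially routine commutator bookkeeping; the only genuinely load-bearing observation is the identity $[x,y^{-1}]={}^{y^{-1}}([x,y]^{-1})$ and its left-handed analogue, which hold in every group and reduce each equivalence to a single instance of ``$[x,y]$ commutes with $y$ (resp. with $x$)''. The only point requiring mild care is the passage to negative exponents in (iv) and (v), but this is handled cleanly by feeding (ii) and (iii) back into the positive-exponent identities.
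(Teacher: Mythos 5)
Your proof is correct; every step checks out, including the slightly delicate passage to negative exponents, which you rightly handle by feeding conditions ii.\ and iii.\ back into the positive-exponent identities. The route differs from the paper's in its decomposition. Your load-bearing observation --- the identity $[x,y^{-1}]={}^{y^{-1}}\bigl([x,y]^{-1}\bigr)$, valid in \emph{every} group, together with its left-handed twin $[x^{-1},y]={}^{x^{-1}}\bigl([x,y]^{-1}\bigr)$ --- yields each of the equivalences i.$\Leftrightarrow$ii.\ and i.$\Leftrightarrow$iii.\ in both directions at once, by reducing them to ``$[x,y]$ commutes with $y$'', respectively ``$[x,y]$ commutes with $x$'', i.e., to conditions i.\ and ii.\ of Proposition~\ref{equivalent properties to 2-Engel}. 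The paper instead halves the casework with a symmetry trick (ii.\ and iii.\ are interchanged by swapping $x$ and $y$ and inverting, using $[x,y]^{-1}=[y,x]$, and likewise iv.\ and v.), proves i.$\Rightarrow$iii.\ from the $2$-Engel-specific expansion $[xy,z]={}^{x}([y,z][x,z])$ of Lemma~\ref{lemma for 2-Engel} applied to $1=[xx^{-1},y]$, and closes the loop ii.$\Rightarrow$i.\ by an explicit letter-by-letter expansion of $[[x,y],y]$. The induction for positive exponents is essentially identical in the two arguments (the paper only does v., recovering iv.\ by its symmetry; you do both directly), as is the observation that ii.\ is the $k=-1$ case of iv. What each route buys: the paper's symmetry shortcut saves repetition, while your universal conjugation identity makes the two inverse-equivalences conceptually transparent and renders your proof of this proposition independent of Lemma~\ref{lemma for 2-Engel}, at the cost of writing out both sides of a symmetric argument.
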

\begin{proof}
It is clear that ii.\ and iii.\ are equivalent, as are iv.\ and v.. It is also clear that ii.\ is a special case of iv.. If now i.\ holds, then by Lemma~\ref{lemma for 2-Engel} we may deduce
\[
1=[1,y]=[xx^{-1},y]={}^x([x^{-1},y][x,y])
\]
from which it follows that $[x^{-1},y][x,y]=1$, and iii.\ holds. On the other hand, ii.\ implies i.\ via
\[
[[x,y],y]=[x,y]y[x,y^{-1}]y^{-1}=xyx^{-1}y^{-1}yxy^{-1}x^{-1}yy^{-1}=1. 
\]
For positive $k$, v.\ follows from i.\ by induction. The cases $k=0$ and $k=1$ being clear, we assume that $[x^k,y]=[x,y]^k$ for some $k\geq 1$. Then Lemma~\ref{lemma for 2-Engel} and Proposition~\ref{equivalent properties to 2-Engel} together imply
\[
[x^{k+1},y]=[xx^k,y]={}^x([x^{k},y][x,y])={}^x([x,y]^{k}[x,y])=
{}^x([x,y]^{k+1})=[x,y]^{k+1}.
\]
If $k$ is negative then we first apply iii..
\end{proof}

It is known (and in fact not hard to see, using the above results) that the free object on two generators in the variety $\Eng_2(\Gp)$ of $2$-Engel groups is $2$-nilpotent. This allows us to prove the following result.

\begin{lemma}\label{Binary commutators distribute}
In a $2$-Engel group $E$, consider $x$, $y\in E$ and the subgroup $H$ generated by $\{x,y\}$. Let $a, b, c \in H$. Then:
\begin{enumerate}
\item $[[a,b],c]=1$;
\item $[ab,c]=[b,c][a,c]$ and $[a,bc]=[a,b][a,c]$;
\item $[a^{-1},b]=[a,b]^{-1}=[a,b^{-1}]$;
\item $[a^k,b]=[a,b]^k=[a,b^k]$, for all $k\in \Z$.
\end{enumerate}
\end{lemma}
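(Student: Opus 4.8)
The plan is to exploit the fact, recalled immediately before the statement, that the free $2$-Engel group on two generators is $2$-nilpotent. Since $H$ is generated by the two elements $x$ and $y$, it is a quotient of this free object, and hence itself $2$-nilpotent; equivalently, its derived subgroup $[H,H]$ is contained in the centre $Z(H)$. As every $[a,b]$ with $a,b\in H$ lies in $[H,H]$, it is central in $H$, so $[[a,b],c]=1$ for all $a,b,c\in H$. This establishes part~(1) and, more importantly, supplies the tool that drives the rest of the argument: conjugation by any element of $H$ fixes every commutator of elements of $H$.

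For part~(2) I would apply the $2$-Engel expansion formulas of Lemma~\ref{lemma for 2-Engel}, namely $[ab,c]={}^{a}([b,c][a,c])$ and $[a,bc]={}^{b}([a,b][a,c])$. Since the bracketed factors lie in $[H,H]\subseteq Z(H)$, the outer conjugations act trivially, yielding $[ab,c]=[b,c][a,c]$ and $[a,bc]=[a,b][a,c]$ at once. Part~(3) then follows by the standard cancellation argument: from $1=[aa^{-1},b]=[a^{-1},b][a,b]$ we read off $[a^{-1},b]=[a,b]^{-1}$, and symmetrically $1=[a,bb^{-1}]=[a,b][a,b^{-1}]$ gives $[a,b^{-1}]=[a,b]^{-1}$, so that the displayed three quantities all agree.

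Finally, part~(4) is a routine induction on $k$. For $k\geq 0$ one uses part~(2) in the form $[a^{k+1},b]=[a\,a^{k},b]=[a^{k},b][a,b]=[a,b]^{k+1}$, together with the symmetric computation for $[a,b^{k+1}]$, the cases $k=0,1$ being immediate; the case of negative $k$ is obtained by combining this with part~(3). I do not expect any genuine obstacle once part~(1) is in hand: every remaining identity reduces to formal manipulation inside the abelian group $[H,H]$, where the order of factors is irrelevant. The only step that truly uses the hypotheses, and hence the single point requiring care, is the justification that $H$ is $2$-nilpotent; this rests on the cited structure of the free $2$-Engel group on two generators, and all subsequent assertions are elementary consequences of the centrality of commutators in $H$.
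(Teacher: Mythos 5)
Your proof is correct and follows essentially the same route as the paper: part~(1) via the $2$-nilpotency of the free $2$-Engel group on two generators (so that $[H,H]\subseteq Z(H)$), and part~(2) by using the centrality of commutators of elements of $H$ to kill the conjugation in the expansion formulas. The only, harmless, divergence is in parts~(3) and~(4), which the paper obtains by citing Proposition~\ref{more equivalences to 2-Engel} for the ambient $2$-Engel group $E$, whereas you re-derive them inside $H$ by cancellation and induction from part~(2) --- both arguments are equally short and valid.
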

\begin{proof}
1.\ Follows from the fact that the free $2$-Engel group on two generators $x$ and~$y$ is necessarily $2$-nilpotent. 2.\ $[ab,c]={}^a[b,c][a,c]=a[b,c]a^{-1}[a,c]\stackrel{1.}{=}[b,c][a,c]$. The proof of the second claim is similar. 3.\ and 4.\ follow from Proposition~\ref{more equivalences to 2-Engel}.
\end{proof}

We now look at a specific natural imaginary splitting in $\Gp$: the one defined by the function
\begin{equation}\label{simple t}
t_{X,Y}\colon X\times Y\ito X+Y\colon (x,y)\mapsto \underline{x}^{-1}\overline{y}\underline{x}^2,
\end{equation}
for any pair of groups $X$ and $Y$. It is easy to see that this $t$ is indeed a natural imaginary splitting. When $X=Y$, we write
$$x* y= \mu^Y(x,y)=\nabla_Y(t_{Y,Y}(x,y))=x^{-1}yx^2.$$
It is easy to check that $x*1=x=1*x$ and that $x*x^{-1}=1=x^{-1}*x$; however $*$ is not associative.

A group $Y$ is an intrinsic Schreier special object with respect to \eqref{simple t} if there exists an imaginary retraction $q\colon Y\times Y\ito Y$ such that \iSs1 and \iSs2 hold. In this case
\begin{enumerate}
\item[\iSs1] means that $q(x,y)*y=x$, for all $x$, $y\in Y$, and
\item[\iSs2] means that $q(x*y,y)=x$, for all $x$, $y\in Y$
\end{enumerate}
---see Section~\ref{Intrinsic Schreier special objects}.

\begin{proposition}\label{2-Engel is iSspecial wrt simple t}
If $Y$ is a $2$-Engel group, then $Y$ is an intrinsic Schreier special object with respect to the natural imaginary splitting \eqref{simple t}.
\end{proposition}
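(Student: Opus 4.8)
The plan is to produce an \emph{explicit} imaginary retraction and verify the two axioms by hand. Since $\Gp$ is a variety, an imaginary morphism $Y\times Y\dashto Y$ is nothing but a set-theoretic function $Y\times Y\to Y$ (Example~\ref{Varieties Comonad}), so it suffices to exhibit such a function $q$ and to check the two conditions \iSs1 and \iSs2 in the concrete forms recalled just before the statement: $q(x,y)*y=x$ and $q(x*y,y)=x$, where $x*y=x^{-1}yx^2$. Throughout, every manipulation takes place inside the subgroup $\langle x,y\rangle$, which is $2$-generated and hence $2$-nilpotent, so that Lemma~\ref{Binary commutators distribute} applies: commutators are central and bilinear, and triple commutators vanish.

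The first step is to rewrite the imaginary addition in commutator form. Using the identity $x^{-1}yx=y[y^{-1},x^{-1}]$ together with $[y^{-1},x^{-1}]=[x,y]^{-1}$ and $yx=[x,y]^{-1}xy$ (all valid in $\langle x,y\rangle$), one obtains
\[
x*y=x^{-1}yx^2=yx[x,y]^{-1}=xy[x,y]^{-2}\qquad\text{for all }x,y\in Y.
\]
This reduction is the only real content of the proof; once it is available, the rest is a short commutator computation.

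The second step is to set $q(x,y)=xy^{-1}[x,y]^2$ and to verify the two axioms. For \iSs1, apply the displayed identity with first argument $q(x,y)$: since $[x,y]^2$ is central one finds $[q(x,y),y]=[xy^{-1},y]=[x,y]$ and $q(x,y)\,y=x[x,y]^2$, whence $q(x,y)*y=x[x,y]^2[x,y]^{-2}=x$. For \iSs2, write $z=x*y=xy[x,y]^{-2}$; centrality gives $zy^{-1}=x[x,y]^{-2}$ and $[z,y]=[xy,y]=[x,y]$, so that $q(z,y)=zy^{-1}[z,y]^2=x[x,y]^{-2}[x,y]^2=x$.

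The main point to be careful about is purely bookkeeping: one must keep every computation inside the $2$-nilpotent subgroup $\langle x,y\rangle$ (for \iSs1 the relevant subgroup is $\langle q(x,y),y\rangle\subseteq\langle x,y\rangle$), so that the centrality and bilinearity of commutators supplied by Lemma~\ref{Binary commutators distribute} are legitimately available. No Engel identity beyond those packaged in that lemma is needed, and the two verifications are each two lines long.
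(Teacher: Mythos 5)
Your proof is correct, and it in fact produces the same retraction as the paper: since $[x,y^{-1}]=[x,y]^{-1}$ in a $2$-Engel group, your $q(x,y)=xy^{-1}[x,y]^2$ is exactly the paper's $q(x,y)=x*y^{-1}$. The verification, however, runs along a genuinely different route. The paper checks \iSs1 by a direct word computation in $Y$: it expands $(x^{-1}y^{-1}x^2)*y$, recognises the subword $[x,y]$, rewrites it as $[y^{-1},x]$ via Proposition~\ref{more equivalences to 2-Engel}.iii, and cancels; \iSs2 then comes for free by swapping $y$ and $y^{-1}$, whereas you verify it directly. You instead first normalise the operation to $x*y=xy[x,y]^{-2}$ and then reduce both axioms to central bookkeeping through Lemma~\ref{Binary commutators distribute}. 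This is in effect the specialisation $k=-2$ of the paper's treatment of the \emph{general} case (Propositions~\ref{Proposition Simple *} and~\ref{iSchreier for all t}, where $x*y=[x,y]^kxy$ and $q(x,y)=[x,y^{-1}]^kxy^{-1}$), so what your approach buys is uniformity: the same two-line computation handles every natural imaginary splitting at once, and this proposition becomes a corollary of the normal-form idea rather than a separate calculation. What it costs is a heavier input: Lemma~\ref{Binary commutators distribute}.1 rests on the (quoted, not proved) $2$-nilpotency of the free $2$-Engel group on two generators, while the paper's computation for this particular $t$ needs only the elementary identity of Proposition~\ref{more equivalences to 2-Engel}.iii. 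Your bookkeeping is sound where it needs to be: the identification of imaginary morphisms $Y\times Y\dashto Y$ with set-maps and the elementwise forms of \iSs1 and \iSs2 are exactly what the paper records before the statement, and your observation that $\langle q(x,y),y\rangle\subseteq\langle x,y\rangle$ is precisely what legitimises applying the centrality and bilinearity identities to the pair $(q(x,y),y)$.
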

\begin{proof}
We define the imaginary retraction by $q(x,y)=x*y^{-1}$. Then, for all $x$, $y\in Y$, \iSs1 holds:
\begin{align*}
q(x,y)*y & = (x*y^{-1})*y \\
 & = (x^{-1} y^{-1} x^2) * y 
 = x^{-2}yxyx^{-1}y^{-1}x^2x^{-1}y^{-1}x^2 \\
 & = x^{-2}y [x,y] xy^{-1}x^2 
 \;\stackrel{\text{\ref{more equivalences to 2-Engel}.iii}}{=} \;x^{-2}y [y^{-1},x] xy^{-1}x^2 \\
 & = x^{-2} y y^{-1}xyx^{-1}xy^{-1}x^2 = x.
\end{align*}
As for \iSs2, the equality $q(x*y,y)=(x*y)*y^{-1} = x$ holds by swapping $y$ and $y^{-1}$ in \iSs1.
\end{proof}

\begin{example}\label{counterexample} The dihedral group ${D_{10}}$ is generated by elements $a$ and $b$ such that $a^5=1$, $b^2=1$ and $abab=1$. We have $$ {D_{10}}=\{1,a,a^2,a^3,a^4, b, ab, a^2b, a^3b, a^4b\},$$
where the elements $b$, \dots, $a^4b$ are all inverses to themselves. We have
\begin{itemize}
 \item ${D_{10}}$ is not a $2$-Engel group: $[a,ab]ab = a^2$, while $ab[a,ab]=a^4b$.
 \item ${D_{10}}$ is an intrinsic Schreier special object with respect to the natural imaginary splitting~\eqref{simple t}. It suffices to build the Cayley table for the product $*$ and observe that it gives a Latin square. The fact that it is a Latin square guarantees the existence of a unique element, which is equal to $q(x,y)$, satisfying the equality \iSs1 $q(x,y)*y=x$. The equality \iSs2 follows from the uniqueness of each $q(x,y)$.
 \item ${D_{10}}$ is not an intrinsic Schreier special object with respect to the natural imaginary splitting which gives rise to $x*'y=[x,y]^2xy$. For example, $q(1,b)$ should be the unique element of ${D_{10}}$ such that $q(1,b)*' b=1$. However, all of the elements $b$, \dots, $a^4b$ satisfy this equality.
\end{itemize}
\end{example}

This example shows that the converse of Proposition~\ref{2-Engel is iSspecial wrt simple t} is false. However, we may claim the following:

\begin{proposition}\label{Y iSspecial wrt simple t is 2-Engel}
If a group $Y$ is an intrinsic Schreier special object with respect to the natural imaginary splitting
\eqref{simple t} and such that $q(x,y)=x*y^{-1}$, then $Y$ is a $2$-Engel group.
\end{proposition}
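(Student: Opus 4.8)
The plan is to extract from axiom \iSs1 a single commutator identity and to recognise it as one of the equivalent forms of the $2$-Engel condition collected in Proposition~\ref{more equivalences to 2-Engel}. In fact only \iSs1 will be used: the hypothesis that $Y$ be an intrinsic Schreier special object supplies both \iSs1 and \iSs2, but the conclusion follows from \iSs1 alone.

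First I would spell out \iSs1 for the prescribed retraction $q(x,y)=x*y^{-1}=x^{-1}y^{-1}x^2$. Using only the definition $a*b=a^{-1}ba^2$, and \emph{no} Engel hypothesis, the purely formal computation that opens the proof of Proposition~\ref{2-Engel is iSspecial wrt simple t} yields the group-theoretic identity
\[
q(x,y)*y=(x*y^{-1})*y=x^{-2}y[x,y]xy^{-1}x^2,
\]
valid in every group. The crucial observation is that the step in that earlier proof which invoked \ref{more equivalences to 2-Engel}.iii is precisely the relation we now wish to \emph{force}, rather than to assume.

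Next I would impose \iSs1, that is $q(x,y)*y=x$, on the displayed expression. Left-multiplying by $x^2$ and then right-multiplying by $x^{-2}$ reduces it to $y[x,y]xy^{-1}=x$, and three further cancellations turn this into
\[
[x,y]=y^{-1}xyx^{-1}=[y^{-1},x]
\]
for all $x$, $y\in Y$. Since $[y^{-1},x]=[x,y^{-1}]^{-1}$, this is equivalent to $[x,y^{-1}]=[x,y]^{-1}$ for all $x$, $y\in Y$, which is exactly condition ii of Proposition~\ref{more equivalences to 2-Engel}. Hence $Y$ is a $2$-Engel group. The only labour is the word-bookkeeping of the cancellations, which presents no genuine obstacle: the manipulations simply run the forward computation of Proposition~\ref{2-Engel is iSspecial wrt simple t} in reverse, with the Engel identity emerging as a consequence instead of being fed in.
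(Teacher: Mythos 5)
Your proof is correct and essentially identical to the paper's: both expand \iSs1 for $q(x,y)=x*y^{-1}$ with $a*b=a^{-1}ba^2$, cancel the resulting word down to the identity $[x,y]=[y^{-1},x]$, and conclude via Proposition~\ref{more equivalences to 2-Engel} (you invoke its condition ii, the paper its condition iii, but these are interchangeable by inverting a commutator). Your observation that only \iSs1 is needed matches the paper as well.
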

\begin{proof}
It suffices we use \iSs1
\[
(x*y^{-1})*y=x
\]
to see that $Y$ is in $\Eng_2(\Gp)$. Indeed, this is equivalent to
\begin{align*}
x=(x^{-1}y^{-1}x^{2})*y&=x^{-2}yx y x^{-1}y^{-1}x^{2}x^{-1}y^{-1}x^{2}\\
&=x^{-2}yxyx^{-1}y^{-1}xy^{-1}x^{2},
\end{align*}
which may be rewritten as $1=x^{-2}yxyx^{-1}y^{-1}xy^{-1}x$, so
$1=x^{-1}yxyx^{-1}y^{-1}xy^{-1}$. This gives
\[
y^{-1}xyx^{-1}=xyx^{-1}y^{-1},
\]
or, equivalently, $[y^{-1},x]=[x,y]=[y,x]^{-1}$. The result now follows from Proposition~\ref{more equivalences to 2-Engel}.
\end{proof}

Next we aim to prove the that a $2$-Engel group~$Y$ is an intrinsic Schreier special object with respect to \emph{all} natural imaginary splittings $t$. So, we need to extend Proposition~\ref{2-Engel is iSspecial wrt simple t} to all $t$.

\begin{lemma}\label{Naturality gives easy splitting}
If $t$ is a natural imaginary splitting in $\Gp$, then for each pair of groups $X$, $Y$ and all $x\in X$, $y\in Y$ we have that $t_{X,Y}(x,y)\in X+Y$ may be written as a product
\[
\underline{x}^{k_1}\overline{y}^{l_1}\cdots \underline{x}^{k_m}\overline{y}^{l_m},
\]
for some $m\in \mathbb{N}$ and $k_1$, \dots, $k_m$, $l_1$, \dots, $l_m\in \Z$ such that
\[
\sum_{1\leq i\leq m}k_i=1=\sum_{1\leq i\leq m}l_i.
\]
\end{lemma}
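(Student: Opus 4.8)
The plan is to reduce the general statement to the single ``universal'' instance $X=Y=\Z$, exploiting that $\Z$ is the free group on one generator together with the naturality of $t$.

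First I would fix $x\in X$, $y\in Y$ and take the unique group homomorphisms $u\colon \Z\to X$ and $v\colon \Z\to Y$ determined by $u(1)=x$ and $v(1)=y$. Instantiating the naturality square \eqref{naturality of t} at $u$ and $v$ gives $(u+v)\circ t_{\Z,\Z}=t_{X,Y}\circ P(u\times v)$. Evaluating both sides on the generator $[(1,1)]\in P(\Z\times\Z)$ and noting that $P(u\times v)([(1,1)])=[(x,y)]$, we obtain
\[
t_{X,Y}(x,y)=(u+v)\bigl(t_{\Z,\Z}(1,1)\bigr).
\]
Hence the whole problem is governed by the single element $w\coloneq t_{\Z,\Z}(1,1)$ of the coproduct $\Z+\Z$.

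Next I would use that $\Z+\Z$ is the free group on the two generators $a\coloneq \iota_1(1)=\underline{1}$ and $b\coloneq \iota_2(1)=\overline{1}$. Every element of this free product is a finite alternating product of powers of $a$ and $b$; allowing some exponents to vanish, we may always write $w=a^{k_1}b^{l_1}\cdots a^{k_m}b^{l_m}$ for some $m\in\N$ and $k_i,l_i\in\Z$, i.e.\ in a form beginning with a power of $a$ and ending with a power of $b$. Since $(u+v)(a)=\iota_1(u(1))=\underline{x}$ and $(u+v)(b)=\iota_2(v(1))=\overline{y}$, applying $u+v$ transports this expression into
\[
t_{X,Y}(x,y)=\underline{x}^{k_1}\overline{y}^{l_1}\cdots \underline{x}^{k_m}\overline{y}^{l_m},
\]
which is precisely the claimed alternating shape.

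Finally, to pin down the exponent sums I would invoke the defining splitting condition \eqref{im splitting}, i.e.\ $r_{\Z,\Z}\,t_{\Z,\Z}=\varepsilon_{\Z\times\Z}$, once more evaluated at $[(1,1)]$: the right-hand side equals $(1,1)\in\Z\times\Z$. As $r_{\Z,\Z}$ sends $a\mapsto(1,0)$ and $b\mapsto(0,1)$ into the \emph{abelian} group $\Z\times\Z$, it factors through the abelianisation and therefore records only total exponents, so $r_{\Z,\Z}(w)=\bigl(\sum_i k_i,\sum_i l_i\bigr)$. Comparing with $(1,1)$ forces $\sum_i k_i=1=\sum_i l_i$, which completes the argument. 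The one genuinely clever move is the reduction to the case $\Z,\Z$; everything after that is bookkeeping, the only subtlety being to keep the two coproduct inclusions apart and to notice that the non-commutativity of $\Z+\Z$ is exactly what the passage to the abelian $\Z\times\Z$ erases when the exponent sums are read off.
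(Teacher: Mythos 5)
Your proposal is correct and takes essentially the same route as the paper's own proof: both reduce to the universal case $X=Y=\Z$ via naturality of $t$ along the homomorphisms $\Z\to X$, $\Z\to Y$ determined by $1\mapsto x$ and $1\mapsto y$, write $t_{\Z,\Z}(1,1)$ as an alternating word in the free product $\Z+\Z$, and extract the exponent-sum condition from the splitting identity \eqref{im splitting}. The only difference is cosmetic: you spell out explicitly the evaluation at the generator $[(1,1)]$ and the abelianisation argument showing $r_{\Z,\Z}$ records total exponents, steps the paper leaves implicit with a back-reference to Subsection~\ref{Natural imaginary splittings}.
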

\begin{proof}
If $X=Y=\Z$, then $t_{\Z,\Z}(k,l)\in \Z+\Z$ must be of the form $\underline{k_1}\overline{l_1}\cdots \underline{k_m}\overline{l_m}$, for some $m\in \mathbb{N}$ and $k_1$, \dots, $k_m$, $l_1$, \dots, $l_m\in \Z$ such that $\sum_{1\leq i\leq m}k_i=k$ and $\sum_{1\leq i\leq m}l_i=l$, for all $(k,l)\in \Z\times \Z$ (see Subsection~\ref{Natural imaginary splittings}). Consider the group homomorphisms $f\colon \Z \to X \colon 1 \mapsto x$ and $g\colon \Z \to Y \colon 1 \mapsto y$. The naturality of $t$ gives the commutative diagram (see \eqref{naturality of t})
$$\xymatrix{\Z\times \Z \ar@{-->}[r]^-{t_{\Z,\Z}} \ar[d]_-{f\times g} & \Z+\Z \ar[d]^-{f+g} \\
 X\times Y \ar@{-->}[r]_-{t_{X,Y}} & X+Y,}
$$
from which we conclude that $t_{X,Y}(x,y)=t_{X,Y}(f\times g)(1,1)=(f+g)t_{\Z,\Z}(1,1)$. Suppose that $t_{\Z,\Z}(1,1)= \underline{k_1}\overline{l_1}\cdots \underline{k_m}\overline{l_m}$, where $\sum_{1\leq i\leq m}k_i=1=\sum_{1\leq i\leq m}l_i$. We get
$t_{X,Y}(x,y)=\underline{x}^{k_1}\overline{y}^{l_1}\cdots \underline{x}^{k_m}\overline{y}^{l_m}$, as desired.
\end{proof}

\begin{proposition}\label{Proposition Simple *}
If $Y$ is a $2$-Engel group and $t$ is a natural imaginary splitting in $\Gp$, then the induced operation $x* y= \mu^Y(x,y)=\nabla_Y(t_{Y,Y}(x,y))$ may be written as
\[
x*y=[x,y]^kxy
\]
for some $k\in \Z$.
\end{proposition}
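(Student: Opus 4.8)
The plan is to reduce the computation of $x*y$ to an explicit word in $x$ and $y$ via Lemma~\ref{Naturality gives easy splitting}, and then to normalise that word inside the two-generated subgroup $H=\langle x,y\rangle$, exploiting the fact that $H$ is $2$-nilpotent (Lemma~\ref{Binary commutators distribute}).

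First I would apply Lemma~\ref{Naturality gives easy splitting} to write
\[
t_{Y,Y}(x,y)=\underline{x}^{k_1}\overline{y}^{l_1}\cdots \underline{x}^{k_m}\overline{y}^{l_m}\in Y+Y,
\]
with $\sum_i k_i=1=\sum_i l_i$. Since $\nabla_Y=\bi{1_Y}{1_Y}$ is the group homomorphism $Y+Y\to Y$ restricting to the identity on each summand, applying it gives
\[
x*y=\nabla_Y(t_{Y,Y}(x,y))=x^{k_1}y^{l_1}\cdots x^{k_m}y^{l_m},
\]
an honest product in $Y$ whose $x$-exponents and whose $y$-exponents each sum to $1$. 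All subsequent computations take place in the subgroup $H=\langle x,y\rangle$, which contains $x*y$.

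Next I would use Lemma~\ref{Binary commutators distribute} to control the commutator structure of $H$. Part~1 says $[[a,b],c]=1$ for $a$, $b$, $c\in H$, so $H$ is nilpotent of class at most $2$ and $[H,H]$ is central; parts~2--4 say that $[-,-]$ is $\Z$-bilinear on $H$. Combining these, every commutator of elements of $H$ reduces to a power of $[x,y]$: modulo the central subgroup $[H,H]$ each element of $H$ is of the form $x^ay^b$, commutators with central elements are trivial, and bilinearity turns $[x^{a}y^{b},x^{a'}y^{b'}]$ into a power of $[x,y]$. Hence $[H,H]=\langle [x,y]\rangle$ is a central cyclic subgroup.

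Finally I would compare $x*y$ with $xy$ in the abelianisation $H/[H,H]$. Since the $x$- and $y$-exponents of the word for $x*y$ both sum to $1$, the images of $x*y$ and of $xy$ agree in $H/[H,H]$; therefore $(x*y)(xy)^{-1}\in [H,H]=\langle [x,y]\rangle$, so $(x*y)(xy)^{-1}=[x,y]^{k}$ for some $k\in\Z$. As $[x,y]$ is central, this rearranges to $x*y=[x,y]^{k}xy$, as claimed. The only genuinely substantive point is the identification $[H,H]=\langle[x,y]\rangle$; once the $2$-nilpotency and bilinearity from Lemma~\ref{Binary commutators distribute} are in hand, the remaining exponent bookkeeping via abelianisation is routine.
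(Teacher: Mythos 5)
Your proposal is correct and takes essentially the same route as the paper: Lemma~\ref{Naturality gives easy splitting} reduces $x*y$ to a word in $x$ and $y$ whose exponent sums are both $1$, and Lemma~\ref{Binary commutators distribute} is then used to collapse the discrepancy with $xy$ to a power of $[x,y]$. Your passage through the abelianisation --- observing $(x*y)(xy)^{-1}\in[H,H]$ and identifying $[H,H]=\langle[x,y]\rangle$ via $2$-nilpotency and bilinearity --- is merely a tidier packaging of the paper's step, which factors out $xy$ explicitly and notes that the remaining zero-exponent-sum word is a product of (nested) commutators, each reducing to a power of $[x,y]$.
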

\begin{proof}
Lemma~\ref{Naturality gives easy splitting} tells us that
\[
x*y={x}^{k_1}{y}^{l_1}\cdots {x}^{k_m}{y}^{l_m},
\]
for some $m\in \mathbb{N}$ and $k_1$, \dots, $k_m$, $l_1$, \dots, $l_m\in \Z$ such that $\sum_{1\leq i\leq m}k_i=1=\sum_{1\leq i\leq m}l_i$. We rewrite the expression above as
\[
x*y=({x}^{k_1}{y}^{l_1}\cdots {x}^{k_m}{y}^{(l_m-1)}x^{-1}y^{0})xy,
\]
 where the product in brackets is such that the sums of the exponents of the $x$'s and $y$'s are zero. Hence this expression is a commutator word in~$x$ and~$y$: it is a product of (nested) commutators. By Lemma~\ref{Binary commutators distribute}, all higher-order commutators in this product vanish; furthermore, the expression is equal to a product of commutators of the form $[x,y]$ or $[y,x]=[x,y]^{-1}$. Hence it is of the form $[x,y]^k$ for some integer~$k$.
\end{proof}

\begin{proposition}\label{iSchreier for all t}
If $Y$ is a $2$-Engel group, then $Y$ is intrinsic Schreier special with respect to all natural imaginary splittings in $\Gp$.
\end{proposition}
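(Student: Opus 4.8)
The plan is to reduce the statement to a purely group-theoretic assertion about the operation $*$ induced by the given $t$, and then to solve the relevant equation by hand using the commutator calculus of $2$-Engel groups. Fix a natural imaginary splitting $t$ and write $x*y=\mu^Y(x,y)$. Since $Y$ is a group, every set-theoretic function $Y\times Y\to Y$ is automatically an imaginary morphism (it extends uniquely to a homomorphism out of the free group on $Y\times Y$), so by Theorem~\ref{one-sided loop} together with the reformulation of the axioms recorded in Section~\ref{Intrinsic Schreier special objects} (see also the lines preceding Proposition~\ref{2-Engel is iSspecial wrt simple t}), it suffices to produce a function $q\colon Y\times Y\ito Y$ with $q(x,y)*y=x$ and $q(x*y,y)=x$ for all $x$, $y\in Y$. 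These two identities say, respectively, that $q(-,y)$ is a right and a left inverse of the right translation $R_y\colon z\mapsto z*y$; they hold simultaneously (with $q(-,y)$ forced to equal $R_y^{-1}$) exactly when each $R_y$ is a bijection of $Y$. Thus the whole proposition comes down to showing that $R_y$ is bijective for every $y$.

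By Proposition~\ref{Proposition Simple *} I may write $x*y=[x,y]^k xy$ for a fixed $k\in\Z$ depending only on $t$. As right multiplication by $y$ in the group $Y$ is itself a bijection, $R_y$ is bijective if and only if the map $T_y\colon z\mapsto [z,y]^k z$ is. Working inside the two-generated subgroup $\langle z,y\rangle$, part~(1) of Lemma~\ref{Binary commutators distribute} shows that $[z,y]$ is central in $\langle z,y\rangle$, so it commutes with $z$ and I may rewrite
\[
T_y(z)=z[z,y]^k.
\]

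Next I would exhibit the explicit inverse $S_y\colon w\mapsto w[w,y]^{-k}$. The key computation, and the main obstacle, is the identity $[T_y(z),y]=[z,y]$: expanding $[z[z,y]^k,y]$ by part~(2) of Lemma~\ref{Binary commutators distribute} gives $[[z,y]^k,y]\,[z,y]=[[z,y],y]^k[z,y]$, and $[[z,y],y]=1$ by Proposition~\ref{equivalent properties to 2-Engel}(i), so the commutator with $y$ is left unchanged; the same bookkeeping yields $[S_y(w),y]=[w,y]$. From these two facts one reads off $S_y(T_y(z))=z[z,y]^k[z,y]^{-k}=z$ and $T_y(S_y(w))=w[w,y]^{-k}[w,y]^k=w$, so $S_y$ is a two-sided inverse of $T_y$. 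Hence $T_y$, and with it $R_y$, is a bijection for every $y\in Y$; defining $q(x,y)=R_y^{-1}(x)=(xy^{-1})[xy^{-1},y]^{-k}$ then produces an imaginary retraction satisfying \iSs1 and \iSs2, so $Y$ is intrinsic Schreier special with respect to $t$. As $t$ was arbitrary, the proposition follows. The $2$-Engel hypothesis is used exactly twice, both in the commutator bookkeeping of this last step: once to make $[z,y]$ central, and once to guarantee $[[z,y],y]=1$.
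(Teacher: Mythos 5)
Your proposal is correct and follows essentially the same route as the paper: both arguments rest on Proposition~\ref{Proposition Simple *} to reduce an arbitrary natural imaginary splitting to $x*y=[x,y]^k xy$ and then on the $2$-Engel commutator calculus of Lemma~\ref{Binary commutators distribute} and Propositions~\ref{equivalent properties to 2-Engel}, \ref{more equivalences to 2-Engel}, and your retraction $q(x,y)=(xy^{-1})[xy^{-1},y]^{-k}$ coincides with the paper's $q(x,y)=x*y^{-1}=[x,y]^{-k}xy^{-1}$, since $[xy^{-1},y]=[x,y]$ is central in $\langle x,y\rangle$. The only difference is presentational: the paper guesses $q=(-)*y^{-1}$, verifies \iSs1 by the same commutator expansion that underlies your invariance $[T_y(z),y]=[z,y]$, and deduces \iSs2 by substituting $y^{-1}$ for $y$, whereas you package the two axioms as two-sided invertibility of the right translations $R_y$ and exhibit the inverse explicitly.
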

\begin{proof}
The proof is similar to that of Proposition~\ref{2-Engel is iSspecial wrt simple t}. We define the imaginary retraction by $q(x,y)=x*y^{-1}=[x,y^{-1}]^kxy^{-1}$ (Proposition~\ref{Proposition Simple *}). We use Propositions~\ref{equivalent properties to 2-Engel}, \ref{more equivalences to 2-Engel} and Lemma~\ref{Binary commutators distribute} to prove that \iSs1 holds:
\begin{align*}
(x*y^{-1})*y &= ([x,y^{-1}]^k\, xy^{-1})*y\\
&=\bigl[[x,y^{-1}]^kxy^{-1},y\bigr]^k\, [x,y^{-1}]^k\, xy^{-1}y\\
&=\bigl[[x, y]^{-k}xy^{-1}, y\bigr]^k [x,y]^{-k}x\\
&\stackrel{\mathclap{\ref{Binary commutators distribute}}}{=}\bigl([y^{-1},y][x,y][[x, y]^{-k}, y]\bigr)^k [x,y]^{-k}x\\
&=\bigl([x,y][[x,y], y]^{-k}\bigr)^k [x,y]^{-k}x\\
&= [x,y]^k [x,y]^{-k}x
=x.
\end{align*}
As for \iSs2, the equality $q(x*y,y)=(x*y)*y^{-1}= x$ follows from \iSs1 by replacing $y$ with $y^{-1}$.
\end{proof}

It remains an open question whether or not the converse of Proposition~\ref{iSchreier for all t} holds; we are currently working on this question. Essentially the same result holds for Lie algebras, as we shall explain now.

\section{Lie algebras}
Let $\K$ be a field, and consider the variety $\Lie_{\K}$ of $\K$-Lie algebras. Recall that a \defn{$2$-Engel} Lie algebra is a Lie algebra $\e$ that satisfies the commutator identity $[[x,y],y]=1$ for all $x$, $y\in \e$. The aim of this section is to relate the variety $\Eng_2(\Lie_\K)$ of $2$-Engel Lie algebras over $\K$ to the Schreier special objects with respect to all natural imaginary splittings: Theorem~\ref{Thm:2EngelLie}. We can actually just follow the pattern of the previous section; since furthermore things are somewhat simpler here, we will only sketch the basic idea.

We may proceed as in Proposition~\ref{2-Engel is iSspecial wrt simple t}, now taking the natural imaginary splitting in $\Lie_{\K}$ defined by
\begin{equation*}\label{simple t Lie}
t_{\x,\y}\colon \x\times \y\ito \x+\y\colon (x,y)\mapsto \underline{x}+\overline{y}+[\underline{x},\overline{y}].
\end{equation*}
Recall that the free Lie algebra over $\K$ on a single generator is $\K$ itself, equipped with the trivial bracket. Mimicking the proof of Lemma~\ref{Naturality gives easy splitting}, we see that for any pair of $\K$-Lie algebras $\x$ and $\y $ and any $x\in \x$, $y\in \y$, necessarily
\[
t_{\x,\y}(x,y) = \underline{x} + \overline{y} + \phi(\underline{x},\overline{y}),
\]
where $\phi(\underline{x},\overline{y})$ is an expression in terms of Lie brackets of $\underline{x}$'s and $\overline{y}$'s. Now using essentially the same proof as in groups, we see that if~$\y$ is $2$-Engel, all higher-order brackets vanish, and we deduce that
\[
t_{\x,\y}(x,y) = \underline{x} + \overline{y} + k[\underline{x},\overline{y}]
\]
for some $k \in \K$. As in Proposition~\ref{Proposition Simple *}, it follows that $x*y = x + y + k[x,y]$. It is then again easy to check that \iSs1 and \iSs2 hold.

\begin{proposition}\label{Thm:2EngelLie}
Any $2$-Engel $\K$-Lie algebra is intrinsic Schreier special, with respect to all natural imaginary splittings in $\Lie_{\K}$.\noproof
\end{proposition}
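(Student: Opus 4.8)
The plan is to transcribe the group-theoretic argument culminating in Proposition~\ref{iSchreier for all t} into the additive setting of Lie algebras, where the computations become cleaner because the underlying abelian group structure is central and no conjugation intervenes. The three ingredients to reproduce are: the Lie analogue of Lemma~\ref{Naturality gives easy splitting}, describing the shape of an arbitrary natural imaginary splitting; the Lie analogue of Proposition~\ref{Proposition Simple *}, reducing the induced operation to a normal form; and finally the verification of \iSs1 and \iSs2 exactly as in Proposition~\ref{2-Engel is iSspecial wrt simple t}.

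First I would establish the analogue of Lemma~\ref{Naturality gives easy splitting}. The key input is that the free $\K$-Lie algebra on a single generator is $\K$ itself with trivial bracket, so that $\K+\K$ is the free Lie algebra on two generators. Given a natural imaginary splitting $t$, a pair of Lie algebras $\x$, $\y$ and elements $x\in\x$, $y\in\y$, I would take the homomorphisms $f\colon\K\to\x$ and $g\colon\K\to\y$ sending $1$ to $x$ and to $y$ respectively, and use the naturality square~\eqref{naturality of t} to obtain $t_{\x,\y}(x,y)=(f+g)\,t_{\K,\K}(1,1)$. Since $t_{\K,\K}(1,1)$ must be sent to $(1,1)$ by the comparison morphism $r_{\K,\K}$, its degree-one part is forced to be $\underline{x}+\overline{y}$; hence it has the form $\underline{x}+\overline{y}+\phi(\underline{x},\overline{y})$, where $\phi$ lies in the derived subalgebra, i.e.\ is a $\K$-linear combination of iterated Lie brackets in $\underline{x}$ and $\overline{y}$ of length at least two.

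Next I would prove the normal-form result. This rests on the Lie version of Lemma~\ref{Binary commutators distribute}: in a $2$-Engel Lie algebra, every iterated bracket of length at least three, formed from elements of the subalgebra generated by two elements, vanishes. I would derive this by polarising the defining identity $[[x,y],y]=0$, i.e.\ replacing $y$ by $y+z$, to obtain the multilinear identity $[[x,y],z]+[[x,z],y]=0$, and combining it with the Jacobi identity to conclude that the two-generated subalgebra is $2$-step nilpotent. Applying this to $\phi$ collapses every higher bracket, leaving $\phi(\underline{x},\overline{y})$ equal to a scalar multiple of $[\underline{x},\overline{y}]$; hence $x*y=\nabla_Y(t_{Y,Y}(x,y))=x+y+k[x,y]$ for some $k\in\K$, which is the Lie analogue of Proposition~\ref{Proposition Simple *}.

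Finally I would verify the two axioms. Taking the imaginary retraction to be $q(x,y)=x*(-y)=x-y-k[x,y]$, axiom \iSs1 amounts to $q(x,y)*y=x$, computed as
\begin{align*}
q(x,y)*y
&= (x-y-k[x,y])*y \\
&= (x-y-k[x,y])+y+k\,[\,x-y-k[x,y],\,y\,] \\
&= x-k[x,y]+k\bigl([x,y]-[y,y]-k[[x,y],y]\bigr) \\
&= x-k[x,y]+k[x,y] = x,
\end{align*}
where we used $[y,y]=0$ and the $2$-Engel identity $[[x,y],y]=0$. Axiom \iSs2, namely $q(x*y,y)=(x*y)*(-y)=x$, then follows from \iSs1 by substituting $-y$ for $y$. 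The main obstacle, exactly as in the group case, is the nilpotency argument forcing the higher brackets in $\phi$ to vanish; once that is in place, the remaining verifications are routine bilinear computations, simpler than their group counterparts precisely because bracket expressions in Lie algebras are additive and bilinear rather than multiplicative.
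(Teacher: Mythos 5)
Your proposal is correct and follows essentially the same route as the paper, which itself only sketches this proof by reference to the group case: the naturality argument via the free Lie algebra $\K$ on one generator giving $t_{\x,\y}(x,y)=\underline{x}+\overline{y}+\phi(\underline{x},\overline{y})$, the $2$-Engel collapse of higher brackets yielding $x*y=x+y+k[x,y]$, and the direct verification of \iSs1 and \iSs2 with $q(x,y)=x*(-y)$. Your write-up in fact supplies details the paper leaves implicit (the polarised identity $[[x,y],z]+[[x,z],y]=0$ forcing two-generated subalgebras to be $2$-step nilpotent, and the explicit computation of $q(x,y)*y=x$), and all of these details check out.
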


\section*{Acknowledgements}
We would like to thank the referee for comments and suggestions which have led to the present improved version of the text.


\begin{thebibliography}{10}

\bibitem{Borceux-Bourn}
F.~Borceux and D.~Bourn, \emph{Mal'cev, protomodular, homological and
 semi-abelian categories}, Math. Appl., vol. 566, Kluwer Acad. Publ., 2004.

\bibitem{Bourn protomod} D. Bourn, \emph{Normalization equivalence, kernel equivalence and affine categories}
in: Lecture Notes in Math. 1488 (1991), Springer, Berlin, 43--62.

\bibitem{B0}
D.~Bourn, \emph{Mal'cev categories and fibration of pointed objects}, Appl.
 Categ. Structures \textbf{4} (1996), 307--327.

\bibitem{Bourntopos}
D.~Bourn, \emph{Protomodular aspect of the dual of a topos}, Adv. Math. \textbf{187} (2004) 240--255.

\bibitem{AMO}
D.~Bourn and Z.~Janelidze, \emph{Approximate {M}al'tsev operations}, Theory
 Appl. Categ. \textbf{21} (2008), no.~8, 152--171.

\bibitem{DB-ZJ-2009b}
D.~Bourn and Z.~Janelidze, \emph{Pointed protomodularity via natural imaginary subtractions}, J.
 Pure Appl. Algebra \textbf{213} (2009), 1835--1851.

\bibitem{DB-ZJ-2009}
D.~Bourn and Z.~Janelidze, \emph{Subtractive categories and extended subtractions}, Appl.\
 Categ.\ Structures \textbf{17} (2009), 317--343.

\bibitem{SchreierBook}
D.~Bourn, N.~Martins-Ferreira, A.~Montoli, and M.~Sobral, \emph{Schreier split
 epimorphisms in monoids and in semirings}, Textos de Matem{\'a}tica
 (S{\'e}rie~B), vol.~45, Departamento de Matem{\'a}tica da Universidade de
 Coimbra, 2013.

\bibitem{BM-FMS2}
D.~Bourn, N.~Martins-Ferreira, A.~Montoli, and M.~Sobral, \emph{Schreier split epimorphisms between monoids}, Semigroup Forum
 \textbf{88} (2014), 739--752.

\bibitem{S-proto}
D.~Bourn, N.~Martins-Ferreira, A.~Montoli, and M.~Sobral, \emph{Monoids and pointed {$S$}-protomodular categories}, Homology,
 Homotopy Appl. \textbf{18} (2016), no.~1, 151--172.

\bibitem{BM-Nine-lemma}
D.~Bourn and A.~Montoli, \emph{The $3 \times 3$ lemma in the
 $\Sigma$-{Mal'tsev} and $\Sigma$-protomodular settings. {A}pplications to
 monoids and quandles}, Homology, Homotopy Appl. \textbf{21} (2019), no.~2,
 305--332.

\bibitem{Burnside}
W.~Burnside, \emph{On groups in which every two conjugate operations
are permutable}, Proc. London Math. Soc. 35 (1902), 28--37.

\bibitem{GJS}
M.~Gran, G.~Janelidze, and M.~Sobral, \emph{Split extensions and semidirect
 products of unitary magmas}, Comment. Math. Univ. Carolinae \textbf{60}
 (2019), no.~4, 509--527.

\bibitem{SAC}G. Janelidze, L. M\'arki, W. Tholen, \emph{Semi-abelian categories}, J. Pure Appl. Algebra 168 (2002)
367--386.

\bibitem{JT}
B.~J\'onsson and A.~Tarski, \emph{Direct decompositions of finite algebraic
 systems}, Notre Dame Mathematical Lectures, Notre Dame, Indiana, 1947.
 
\bibitem{Kappe WP} W.P.~Kappe, \emph{Die A-Norm einer Gruppe}, Illinois J. Math. 5 (1961), 187--197.
 
\bibitem{Kappe LC} L.C.~Kappe, \emph{Finite coverings by $2$-Engel groups}, Bull. Austral. Math. Soc. 38 (1988), 141--150.

\bibitem{MartinsMontoli} N. Martins-Ferreira, A. Montoli, \emph{On the ``Smith is Huq'' condition in S-protomodular categories},
Appl. Categ. Structures 25 (2017), 59--75.

\bibitem{MartinsMontoliSobral} N. Martins-Ferreira, A. Montoli, M.
Sobral, \emph{Semidirect products and crossed modules in monoids
with operations}, J. Pure Appl. Algebra 217 (2013), 334--347.

\bibitem{2Chs}
A.~Montoli, D.~Rodelo, and T.~Van~der Linden, \emph{Two characterisations of
 groups amongst monoids}, {J.~Pure} Appl. Algebra \textbf{222} (2018),
 747--777.

\bibitem{ise}
A.~Montoli, D.~Rodelo, and T.~Van~der Linden, \emph{Intrinsic Schreier split extensions}, Appl. Categ. Structures 28 (2020), 517--538.

\bibitem{Patchkoria} A. Patchkoria, \emph{Crossed semimodules and Schreier internal
categories in the category of monoids}, Georgian Math. Journal 5
n.6 (1998), 575--581.

\end{thebibliography}
\end{document}